\numberwithin{equation}{section}
\def\End{\operatorname{End}}
\def\ker{\operatorname{ker}}
\def\im{\operatorname{Im}}
\def\dim{\operatorname{dim}}
\def\id{\operatorname{id}}
\def\Ind{\operatorname{Ind}}
\def\Ber{\operatorname{Ber}}
\def\Ind{\operatorname{Ind}}
\def\Ad{\operatorname{Ad}}
\def\colim{\operatorname{colim}}
\def\Hom{\operatorname{Hom}}
\def\C{\mathbb{C}}
\def\Q{\mathbb{Q}}
\def\R{\mathbb{R}}
\def\Z{\mathbb{Z}}
\def\OO{\mathcal{O}}
\def\UU{\mathcal{U}}
\def\EE{\mathcal{E}}
\def\b{\mathfrak{b}}
\def\n{\mathfrak{n}}
\def\p{\mathfrak{p}}
\def\q{\mathfrak{q}}
\def\g{\mathfrak{g}}
\def\h{\mathfrak{h}}
\def\r{\mathfrak{r}}
\def\k{\mathfrak{k}}
\def\l{\mathfrak{l}}
\def\s{\mathfrak{s}}
\def\o{\mathfrak{o}}
\def\B{\textbf}
\def\d{\partial}
\def\ol{\overline}
\def\tr{\text{tr}}
\def\Spec{\text{Spec}}
\def\sub{\subseteq}
\def\xto{\xrightarrow}
\newtheorem{thm}{Theorem}[section]
\newtheorem{cor}[thm]{Corollary}
\newtheorem{lemma}[thm]{Lemma}
\newtheorem{prop}[thm]{Proposition}
\theoremstyle{definition}
\newtheorem{definition}[thm]{Definition}
\theoremstyle{remark}
\newtheorem{remark}[thm]{Remark}
\newtheorem{example}[thm]{Example}
\numberwithin{equation}{section}
\begin{document}
	
\title{Spherical Indecomposable Representations of Lie Superalgebras}

\author[Alexander Sherman]{Alexander Sherman}

\maketitle
\pagestyle{plain}

\begin{abstract} We present a classification of all spherical indecomposable representations of classical and exceptional Lie superalgebras.  We also include information about stabilizers, symmetric algebras, and Borels for which sphericity is achieved.  In one such computation, the symmetric algebra of the standard module of $\mathfrak{osp}(m|2n)$ is computed, which in particular gives the representation-theoretic structure of polynomials on the complex supersphere.
\end{abstract}

\section{Introduction}\label{sec_intro}

Let $\g$ be a Lie superalgebra from the list $\g\l(m|n),\o\s\p(m|2n),\p(n),\q(n)$ or exceptional basic simple.  We want to describe all spherical indecomposable representations of $\g$, that is all finite-dimensional indecomposable $\g$-modules $V$ such that there exists an even vector $v\in V_{\ol{0}}$ and a Borel subalgebra $\b$ of $\g$ such that $(\b+\C\cdot\id_V)\cdot v=V$ (see sections \ref{sec_defns} and \ref{sec_tools} for precise definitions).  We add in the action of scalars for greater generality.

The problem of finding all spherical irreducible representations of reductive Lie algebras was solved by V. Kac in \cite{kac1980some}.  There, Kac classifies all visible linear groups acting irreducibly on a vector space, that is actions where the nilvariety has finitely many orbits.  From this he deduces all reductive linear groups acting irreducibly on a vector space with finitely many orbits (i.e. prehomogeneous vector spaces, which were originally classified by Sato and Kimura in \cite{sato1977classification}), leading to the classification of spherical irreducibles.  Later on, A. Leahy in a result communicated by F. Knop in \cite{leahy1998classification}, as well as Benson and Ratcliff in \cite{benson1996classification}, provided a classification of all spherical representations of reductive Lie algebras.

In the classical world, spherical representations are exactly representations which have multiplicity free symmetric algebras.    In \cite{howe1995perspectives}, Howe emphasizes such representations as being pervasive in the study of invariant theory.  Multiplicity-free properties of exterior algebras, and even of supersymmetric algebras are natural in this regard, and have been studied by T. Pecher in \cite{pecher2011multiplicity} and \cite{pecher2012classification}.

Geometrically, spherical representations may be viewed as basic and highly tractable examples of spherical varieties, which are of interest in geometry, representation theory, and combinatorics.

In the super world, the study of symmetric superspaces is of interest.  The representation theory of the space of functions as well as the algebra of invariant differential operators has been studied in recent years (see e.g. \cite{sahi2016capelli}, \cite{alldridge2012harish},\cite{alldridge2015spherical},\cite{sergeev2017symmetric},\cite{sahi2018capelli}).  In \cite{sahi2018capelli}, the authors associate to each simple Jordan superalgebra $J$ a supersymmetric pair $(\g,\k)$ via the TKK construction.  The vector superspace $J$ inherits a $\g$-action making it a simple spherical $\g$-module, and the symmetric superspace $G/K$ is realized as an open $G$-orbit.  This motivates the study of the space $J$, which is a spherical irreducible representation, and the authors describe the representation of the algebra of functions as well as the invariant differential operators. 

In this work, we provide more examples of superspaces which can be thought of as analogues of multiplicity-free representations in the super setting.  In \cite{sherman2019sphericalsupervar}, we expand on the notion of spherical supervarieties and develop connections to representation theory.

\subsection{Structure of Paper}  We will complete the introduction with a statement of the main results of the classification.  In sections \ref{sec_defns} and \ref{sec_tools} we explain our notation and develop the tools we use for the classification. Section \hyperref[proof_procedure]{4} explains the method of proof. Note that appendix \hyperref[App_borels]{A} discusses notation and structure of Borel subalgebras for $\g\l(m|n)$ and $\o\s\p(m|2n)$.

The rest of the paper goes through the classification case by case, section by section, for each algebra, in the order: $\g\l(m|n)$, $\o\s\p(m|2n)$, the exceptional basic simple algebras, $\p(n)$, and finally $\q(n)$.  In each case, in addition to classifying all spherical indecomposable representations, we compute for which Borels the representation is spherical and the (isomorphism class of) the stabilizer subalgebra (in $\g$) of an even vector in the open orbit.  In section \hyperref[App_sym]{10} we explain the structure of the symmetric algebra on the dual representation of most cases.  This includes the computation of functions on the supersphere as a codimension-one subvariety of $\C^{m|2n}$. 

\

The technique of proof is rather simple-minded.  We use the known parameterizations of dominant highest weights for each Lie superalgebra, impose the immediate restrictions sphericity gives, and then check any remaining candidates.  Applying odd reflections becomes our most powerful tool in the case of basic algebras.

\subsection{Statement of Main Results}

We work over the complex numbers.  The main results are as follows.  

For a quasireductive Lie superalgebra $\g$ and representation $\rho:\g\to\g\l(V)$, we say the triple $(V,\g,\rho)$ is \emph{spherical} if there exists a Borel subalgebra $\b$ of $\g$ and a vector $v\in V_{\ol{0}}$ such that $(\rho(\b)+\C\id_{V})v=V$.  We classify such triples $(V,\g,\rho)$ up to equivalence, where we say $(V,\g,\rho)\sim(V',\g',\rho')$ iff $V\cong V'$ and $\rho(\g)+\C\id_{V}$ maps to $\rho'(\g')+\C\id_{V'}$ under the induced isomorphism $\g\l(V)\to\g\l(V')$.  See \cref{sec_tools} for more details.  Note we only consider cases when $V_{\ol{1}}\neq0$, as otherwise we are considering the action of a reductive group, and this case is already known.

As an example, consider the quasireductive Lie superalgebra $\g$ with $\g_{\ol{0}}=0$ and $\g_{\ol{1}}=\C\langle X\rangle$, a one-dimensional odd abelian algebra.  This algebra has a spherical representation acting on the super vector space $\C^{1|1}$, with $X$ acting as the matrix $\begin{bmatrix}
0 & 0\\1 & 0 \end{bmatrix}$.  We refer to this representation as $U^{1|1}$.  It is perhaps the simplest non-trivial spherical representation of a Lie superalgebra that is not purely even.

\

For a representation $V$ of a superalgebra $\g$, we may consider its algebra of functions as a supervariety, which will be $S^\bullet V^*$.  It is interesting to decompose this space as a $\g$-module, and in particular to determine when each symmetric power is completely reducible.  In our case, all symmetric algebras will have multiplicity-free socles by our assumption of sphericity (see \cref{cmult_free}- note that the result still holds for $\g=\q(n)$ because the representations we come across have $(1|1)$-dimensional highest weight space).

The following is the list, up to equivalence, of the infinite families of spherical indecomposable modules, all of which happen to be irreducible, and criteria for when their algebra of functions is multiplicity-free.  We write $GL_{m|n}$, $OSP_{m|2n},P_{n|n}$, and $Q_{n|n}$ for the standard modules for each corresponding Lie superalgebra (assume $m,n>0$ in all cases).  For a representation $V$, write $\Pi V$ for the corresponding parity shift representation.

\renewcommand{\arraystretch}{1.5}

\[	\begin{tabular}{ |c|c|c| } 
\hline 
$V$ & $\text{dim}^sV$ & $S^\bullet V^*$ Completely Reducible? \\
\hline
$GL_{m|n}$ & $(m|n)$ & Always  \\
\hline 
$S^2GL_{m|n}$ & $(\frac{n(n-1)}{2}+\frac{m(m+1)}{2}|mn)$& Always\\ 
\hline
$\Pi S^2GL_{n|n}$ &  $(n^2|n^2)$& Always\\
\hline
$\Pi S^2GL_{n|n+1}$& $(n(n+1)|n(n+1))$& Always \\
\hline
$OSP_{m|2n}$, $m\geq 2$ & $(m|2n)$ &\makecell{Iff $m$ is odd \\ or $m>2n$} \\
\hline
$\Pi OSP_{m|2n}$ & $(2n|m)$ & Always \\
\hline
$Q_{n|n}$ & $(n|n)$ & Always\\
\hline 
$\Pi P_{n|n}$ & $(n|n)$ & Never  \\
\hline
\end{tabular}
\]
\renewcommand{\arraystretch}{1}

In addition to the above infinite families, we have some small cases of spherical indecomposable representations.  They are as follows.

\begin{itemize}
	\item For the Lie superalgebra $\g\l(1|2)$, write $\epsilon_1,\delta_1,\delta_2$ for a basis of its weight space.  Then the Kac modules $K_{1|2}(t\epsilon_1)$ are spherical for all $t$, and their parity shifts $\Pi K_{1|2}(t\epsilon_1)$ are spherical for $t\neq 0$.
	\item For $\o\s\p(2|4)$, write $\epsilon_1,\delta_1,\delta_2$ for a basis of its weight space, and consider the Borel $\b^{\epsilon\delta\delta}$ with simple roots $\epsilon_1-\delta_1,\delta_1-\delta_2,2\delta_2$.  Then $L_{\b^{\epsilon\delta\delta}}(\delta_1+\delta_2-\epsilon_1)$, the irreducible module of highest weight $\delta_1+\delta_2-\epsilon_1$ with respect to this Borel, is spherical.
	\item For $\p(3)$, write $\epsilon_1,\epsilon_2,\epsilon_3$ for the usual basis of its weight space, and let $\omega=\epsilon_1+\epsilon_2+\epsilon_3$.  Let $\nabla(\omega)$ denote the thin Kac module of highest weight $\omega$.  Then this module, along with its radical and quotient by its socle are all spherical.
	\item Finally, for $\q(2)$,  we may take $Q_{2|2}=L(\epsilon_1)$ with respect to the standard Borel, and twist the highest weight by multiples of $\epsilon_1+\epsilon_2$, and we will still have a dominant weight.  So we consider $(Q_{2|2})_t=L(\epsilon_1+t(\epsilon_1+\epsilon_2))$ for $t\in\C$.  If $t\neq -1/2$, then $(Q_{2|2})_t$ and $\Pi (Q_{2|2})_t$ are both spherical.  
	
	When $t=-1/2$, $(Q_{2|2})_{-1/2}$ is the representation obtained via the isomorphism $\q(2)/\C I_{2|2}\to[\p(2),\p(2)]$  and is the restriction of the standard representation of $\p(2)$.  We will therefore write this representation as $\text{Res}_{[\p(2),\p(2)]}P_{2|2}$.
\end{itemize}

	\

The following table of exceptional spherical indecomposable representations completes our list:

\renewcommand{\arraystretch}{1.5}
\[\begin{tabular}{ |c|c|c| } 
\hline 
$V$ & $\text{dim}^sV$ & $S^\bullet V^*$ Completely Reducible?\\
\hline
$U^{1|1}=\operatorname{Res}_{[\p(1),\p(1)]}P_{1|1}$ & $(1|1)$ & No\\
\hline
$K_{1|2}(t\epsilon_1)$ & $(2|2)$ & Iff $t\notin\Q\cap[0,1]$ \\
\hline
$\Pi K_{1|2}(t\epsilon_1)$, $t\neq 0$ & $(2|2)$ & Iff $t\neq 1$\\
\hline 
$L_{\b^{\epsilon\delta\delta}}(\delta_1+\delta_2-\epsilon_1)$ & $(6|4)$ & Yes \\
\hline
$\nabla(\omega)$ & $(4|4)$ & No\\
\hline
$\operatorname{rad}\nabla(\omega)$ & $(3|4)$ & No\\
\hline
$\nabla(\omega)/\operatorname{soc}\nabla(\omega)$ & $(4|3)$ & No\\
\hline
$(Q_{2|2})_{t},t\neq -1/2$ & $(2|2)$  & Always\\
\hline	
$\Pi(Q_{2|2})_t,t\neq -1/2$ & $(2|2)$ & Always\\
\hline
$\text{Res}_{[\p(2),\p(2)]}P_{2|2}$ & $(2|2)$ & No\\
\hline
\end{tabular}
\]
\renewcommand{\arraystretch}{1}

Note that there is some redundancy in the above list, in that some of the Kac modules for $\g\l(1|2)$ are equivalent and also some of the modules for $\q(2)$ showing up are equivalent.

\begin{remark}
Some observations about the above classification:
\begin{itemize}
	\item All cases with quasireductive stabilizer appear in \cite{sahi2018capelli}.
	\item The number of $\g_{\ol{0}}$-components of any spherical irreducible representation is always less than or equal to 3; in an indecomposable spherical representation, there may be up to 4 $\g_{\ol{0}}$-components.
	\item No exceptional basic lie superalgebras admit any non-trivial spherical representations.
\end{itemize} 

\subsection{Acknowledgments}  The author would like to thank his advisor, Vera Serganova, for the idea for the paper and useful discussions along the way.  The author also thanks the reviewer for helpful suggestions.  This research was partially supported by NSF grant DMS-1701532.

\end{remark}

\section{Notation, Definitions, and Background}\label{sec_defns}

\subsection{Notation List}
\begin{itemize}
	\item $V=V_{\ol{0}}\oplus V_{\ol{1}}$ is parity decomposition;
	\item $S^\bullet V$ symmetric algebra of $V$;
	\item $\g$ a quasireductive superalgebra;
	\item $\b$ a Borel subalgebra;
	\item $\n\sub\b$ the nilpotent subalgebra;
	\item $\b^{op}$ the opposite Borel of $\b$;
	\item $\h_{\ol{0}}$ a Cartan subalgebra of $\g_{\ol{0}}$;
	\item $\alpha$ a root of $\g$;
	\item $\Delta$ the set of all roots of $\g$;
	\item $\Sigma\sub\Delta$ a simple root system;
	\item $\h$ the centralizer of $\h_{\ol{0}}$ in $\g$.
	\item $\lambda$ a weight of $\h_{\ol{0}}$;
	\item $L(\lambda)=L_{\b}(\lambda)$ the irreducible representation of $\g$ of (even) dominant highest weight $\lambda$ with respect to $\b$;
	\item $L_0(\lambda)$ the irreducible representation of $\g_{\ol{0}}$ with respect to $\b_{\ol{0}}$ of (even) dominant highest weight $\lambda$ with respect to $\b_{\ol{0}}$;
	\item $r_\alpha$ an odd reflection;
	\item $r_{\alpha}(\b)$ the Borel gotten from $\b$ via $r_{\alpha}$;
	\item $\b^{\sigma}$ the Borel subalgebra defined by the $\epsilon\delta$-sequence $\sigma$ (see appendix \hyperref[App_borels]{A}).
	\item $K_{m|n}(\lambda)$ the Kac module of $\g\l(m|n)$ of dominant highest weight $\lambda$;
	\item $\g\ltimes V$ see second paragraph of \ref{basic_notation};
	\item $\g_{(\chi)}=\{X\in\g:\chi(X)=0\}$;
	\item $X$ a complex supervariety (see \ref{spher_var} for all other notation about complex supervarieties);

\end{itemize}

\subsection{Basic Notation:}\label{basic_notation}  We work in the category of super vector spaces over $\C$.  For a super vector space $V$ we write $V=V_{\ol{0}}\oplus V_{\ol{1}}$ for its parity decomposition.  For a homogeneous vector $v\in V$, we write $\ol{v}\in\Z_{2}=\{\ol{0},\ol{1}\}$ for its parity.  We also define $\dim^s V$ to be the ordered pair $(\dim V_{\ol{0}}|\dim V_{\ol{1}})$.  Write $\Pi V:=\C^{1|0}\otimes V$ for the parity shift of $V$.

Throughout, $\g=\g_{\ol{0}}\oplus\g_{\ol{1}}$ will denote a finite-dimensional Lie superalgebra.  If $\g$ is a Lie superalgebra and $V$ is a $\g$-module, we write $\g\ltimes V$ for the Lie superalgebra with underlying super vector space $\g\oplus V$ such that $V$ is an abelian ideal, $\g$ a subalgebra, and $[(X,0),(0,v)]=(0,Xv)$ for $X\in\g$, $v\in V$.  

For a Lie superalgebra $\g$ and a character $\chi$ of $\g$, we write $\g_{(\chi)}=\{X\in\g:\chi(X)=0\}$.

\subsection{Quasireductive Superalgebras; Borel Subalgebras} The study of quasireductive Lie superalgebras was developed in \cite{ivanova2008parabolic} and \cite{serganova2011quasireductive}.  We present the basic tools needed for the paper here, for clarity.
\begin{definition}
	A Lie superalgebra $\g$ is quasireductive if $\g_{\ol{0}}$ is a reductive Lie algebra, and the adjoint action of $\g_{\ol{0}}$ on $\g$ integrates to an action of a reductive algebraic group $G_{\ol{0}}\sub GL(\g)$.
\end{definition}

Let $\g$ be quasireductive. Choose a Cartan subalgebra $\h_{\ol{0}}$ of $\g_{\ol{0}}$ and consider the non-zero weights $\Delta\sub\h_{\ol{0}}^*$ of the action of $\h_{\ol{0}}$ on $\g$.  We refer to the pair $(\h_{\ol{0}},\Delta)$ as the root system of $\g$ (with respect to $\h_{\ol{0}}$), and refer to $\Delta$ as the roots.  Write $\Lambda\sub\h_{\ol{0}}^*$ for the weight lattice of $\g_{\ol{0}}$.  Then we have $\Delta\subseteq\Lambda$. Define
\[
\Lambda^\vee_{\R}=\{h\in\h_{\ol{0}}:\alpha(h)\in\R\text{ for all }\alpha\in\Delta\}
\]
Note that by our assumptions a real subspace of $\Lambda^\vee_{\R}$ will pair perfectly with $\operatorname{span}_{\R}\Delta$.

\begin{definition}
For a generic coweight $h\in\Lambda^\vee_{\R}$ (i.e. one such that $\alpha(h)\neq 0$ for all $\alpha\in\Delta$), define $\Delta^{\pm}=\{\alpha\in\Delta:\pm\alpha(h)>0\}$.    Define nilpotent subalgebras
\[
\n^{\pm}=\n^{\pm}(\h_{\ol{0}},h):=\bigoplus\limits_{\alpha\in\Delta^{\pm}}\g_{\alpha},
\]
and define $\h$ to be the centralizer of $\h_{\ol{0}}$ in $\g$, i.e. the zero weight space of the action of $\h_{\ol{0}}$.  In this case we say that a root $\alpha$ is positive (resp. negative) if $\alpha(h)>0$ (resp. $\alpha(h)<0$), and call $\Delta^{+}$ (resp. $\Delta^-$) the set of positive (resp. negative) roots.  

A Borel subalgebra $\b$ of $\g$ is defined to be a subalgebra of the form
\[
\b=\b(\h_{\ol{0}},h):=\h\oplus\n^+
\]
for some choice of $\h_{\ol{0}}$ and $h$.  In this case, we define 
\[
\b^{op}=\b(\h_{\ol{0}},h)^{op}:=\b(\h_{\ol{0}},-h)=\h\oplus\n^-
\]
and say $\b^{op}$ is the opposite Borel to, or of, $\b$.
\end{definition}

Let $\b$ be a Borel subalgebra of $\g$.  Then the following facts are well known.
\begin{itemize}
	\item $\b_{\ol{0}}$ is a Borel subalgebra of $\g_{\ol{0}}$.
	\item If $\tau$ is an automorphism of $\g$, then $\tau(\b)$ is again a Borel subalgebra.
\end{itemize}
\subsection{Representation Theory of $\g$}  Let $\g$ be a quasireductive Lie superalgebra.  We consider the category of finite-dimensional representations of $\g$ which are semi-simple over $\g_{\ol{0}}$.  The classification of irreducible representations in this category was explained in \cite{serganova2011quasireductive}.  We recall it here.  

Fix a Cartan subalgebra $\h_{\ol{0}}\sub\g$ and Borel $\b=\h\oplus\n^+$ containing $\h_{\ol{0}}$.  For a representation $V$ of $\g$, write $P(V)$ for the weights of $\h_{\ol{0}}$ with non-trivial weight spaces in $V$.  For two weights $\lambda,\mu\in\h_{\ol{0}}^*$, write $\mu\leq \lambda$ if we have $\lambda-\mu$ is a sum of positive roots of $\g$ with respect to $\b$. We refer to this as the Bruhat order on $\h_{\ol{0}}^*$ (with respect to $\b$).

The following facts are proven in \cite{serganova2011quasireductive}.

\begin{thm}\label{hw_thm}
	Let $L$ be a simple $\g$-module.  Then there exists a unique weight $\lambda\in P(L)$, called the highest weight of $L$ (with respect to $\b$), such that $\mu\leq\lambda$ for all $\mu\in P(L)$. Further, $L_{\lambda}$ is a simple $\h$-module.  If two simple $\g$-modules have the same highest weight, then they are isomorphic up to parity shift.
\end{thm}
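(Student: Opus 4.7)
The plan is to adapt the classical highest-weight theorem to the super setting, in three stages. \textbf{Stage 1 (existence and uniqueness).} Since $L$ is finite-dimensional, $P(L)$ is finite, so I pick $\lambda\in P(L)$ maximizing the real number $\lambda(h)$, where $h\in\Lambda_\R^\vee$ is the generic coweight defining $\b$. For any $X\in\g_\alpha$ with $\alpha\in\Delta^+$ and any $v\in L_\lambda$, the element $Xv$ lies in $L_{\lambda+\alpha}$, whose $h$-value strictly exceeds $\lambda(h)$; by maximality $\n^+\cdot L_\lambda=0$. Fix nonzero $v\in L_\lambda$: simplicity gives $U(\g)v=L$, and the PBW decomposition $U(\g)=U(\n^-)U(\h)U(\n^+)$ together with $\n^+v=0$ and $U(\h)v\subseteq L_\lambda$ yields $L=U(\n^-)L_\lambda$. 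Hence every weight of $L$ has the form $\lambda-\sum n_\alpha\alpha$ with $\alpha\in\Delta^+$ and $n_\alpha\in\Z_{\geq 0}$, so $\mu\leq\lambda$ for all $\mu\in P(L)$. Uniqueness follows from antisymmetry of $\leq$, which holds because every positive root pairs strictly positively with $h$.

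\textbf{Stage 2 (simplicity of $L_\lambda$ as an $\h$-module).} Pick any simple $\h$-submodule $M\subseteq L_\lambda$, which exists by finite-dimensionality. Since $\n^+M=0$ and $\h M\subseteq M$, a PBW expansion forces $U(\g)M=U(\n^-)M$, which is a nonzero $\g$-submodule of $L$ and hence equals $L$ by simplicity. Projecting onto the $\lambda$-weight component and using that $U(\n^-)$ in positive degree strictly lowers $h$-pairings, I conclude $L_\lambda=M$.

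\textbf{Stage 3 (classification up to parity).} Given two simple $\g$-modules $L$ and $L'$ with highest weight $\lambda$, both $L_\lambda$ and $L'_\lambda$ are simple $\h$-modules on which $\h_{\ol{0}}$ acts by the character $\lambda$. Because $\h/\h_{\ol{0}}$ is purely odd, simple $\h$-modules with a fixed $\h_{\ol{0}}$-character are controlled by a Clifford-type algebra and are unique up to parity shift; after possibly shifting parity of $L'$, fix an $\h$-module isomorphism $\varphi\colon L_\lambda\to L'_\lambda$. Inflate $L_\lambda$ to a $\b$-module with trivial $\n^+$-action and form the induced module $M(\lambda):=\Ind_{\b}^{\g}L_\lambda$. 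Frobenius reciprocity, applied to the inclusions $L_\lambda\embedsto L$ and (via $\varphi$) $L_\lambda\embedsto L'$, yields $\g$-surjections $M(\lambda)\onto L$ and $M(\lambda)\onto L'$ whose kernels both equal the unique maximal submodule of $M(\lambda)$, namely the sum of all submodules with trivial $\lambda$-weight space. Hence $L\cong L'$.

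The main obstacle is Stage 3, specifically the parity bookkeeping when $\h\neq\h_{\ol{0}}$. For algebras like $\q(n)$, the odd Cartan $\h_{\ol{1}}$ carries a nondegenerate form and the resulting Clifford algebra admits irreducibles that need not be self-parity-dual; this is exactly what forces the "up to parity shift" qualifier in the statement. Verifying that the uniqueness of the maximal submodule of $M(\lambda)$ survives the identification via $\varphi$ is essentially formal once one knows that $M(\lambda)$ has a unique irreducible quotient, but the interaction with the parity ambiguity has to be handled case by case in the bookkeeping.
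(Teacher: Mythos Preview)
The paper does not supply its own proof of this theorem: it is stated as background, with the sentence ``The classification of irreducible representations in this category was explained in \cite{serganova2011quasireductive}'' immediately preceding it. So there is no in-paper proof to compare against.

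Your argument is the standard one and is essentially correct. Stages 1 and 2 are clean: maximizing $\lambda(h)$ for the generic coweight $h$ gives $\n^+L_\lambda=0$, PBW gives $L=U(\n^-)L_\lambda$, and the projection argument for simplicity of $L_\lambda$ works because $\h_{\ol 0}$ is central in $\h$ (being the centralizer of $\h_{\ol 0}$), so any $\h$-submodule of $L_\lambda$ is a weight space. Stage 3 is also right in outline: the key input is that $U(\h)/(x-\lambda(x):x\in\h_{\ol 0})$ is a Clifford algebra on $\h_{\ol 1}$ for the form $(X,Y)\mapsto\lambda([X,Y])$, and after quotienting by the radical of this form one gets a genuine Clifford algebra over $\C$, whose simple supermodules are unique up to parity. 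Your closing remark that this ``has to be handled case by case'' undersells your own argument---the Clifford classification is uniform and does not require case analysis; the only dichotomy is whether the simple $\h$-module is isomorphic to its own parity shift, which is exactly what the ``up to parity shift'' clause in the statement absorbs. The Verma-module step is fine once you observe that any proper submodule of $M(\lambda)$ has zero $\lambda$-weight space (since $M(\lambda)_\lambda\cong L_\lambda$ is $\h$-simple), so the maximal proper submodule is unique.
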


\begin{cor}\label{cirrep_maxtorus}
	If $\h=\h_{\ol{0}}$ and $L$ is a simple $\g$-module of highest weight $\lambda$, there exists a unique (up to non-zero scalar) vector $v_{\lambda}\in L_{\lambda}$, which we call a highest weight vector of $L$.  
\end{cor}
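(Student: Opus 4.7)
The plan is to derive this as a short consequence of \cref{hw_thm}, which already tells us that $L_\lambda$ is a simple $\h$-module. Under the hypothesis $\h=\h_{\ol{0}}$, the Cartan is purely even, and since $\h_{\ol{0}}$ is a Cartan subalgebra of the reductive Lie algebra $\g_{\ol{0}}$ it is in particular abelian. So the task reduces to a general fact: a simple super module over a purely even, abelian Lie superalgebra on which the action is by a fixed weight is one-dimensional.

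First I would invoke \cref{hw_thm} to extract the simple $\h$-module structure on $L_\lambda$, together with the observation that $\h$ acts on $L_\lambda$ via the character $\lambda\in\h_{\ol{0}}^*=\h^*$. Then I would argue as follows. Because $\h=\h_{\ol{0}}$ is purely even, the parity decomposition $L_\lambda=(L_\lambda)_{\ol{0}}\oplus(L_\lambda)_{\ol{1}}$ is a decomposition into super $\h$-submodules. Simplicity forces one of these summands to vanish, so $L_\lambda$ is concentrated in a single parity. But then $\h$ acts on $L_\lambda$ by the scalar character $\lambda$, so every one-dimensional subspace of $L_\lambda$ is an $\h$-submodule; combined with simplicity this forces $\dim L_\lambda=1$.

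Existence of $v_\lambda$ is automatic since $\lambda\in P(L)$ means $L_\lambda\neq 0$, and uniqueness up to a non-zero scalar is exactly the statement $\dim L_\lambda=1$ just obtained. The main (but very mild) obstacle is ensuring the parity-splitting argument goes through, which is precisely where the hypothesis $\h=\h_{\ol{0}}$ enters: were $\h$ to contain odd elements (as happens for instance in $\q(n)$), the odd part of $\h$ could interchange $(L_\lambda)_{\ol{0}}$ and $(L_\lambda)_{\ol{1}}$ and the weight space could be genuinely $(1|1)$-dimensional, which is exactly the behavior the corollary is designed to exclude.
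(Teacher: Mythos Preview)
Your proposal is correct and is exactly the intended argument: the paper states this corollary without proof, as an immediate consequence of \cref{hw_thm}, and your write-up supplies precisely the standard details (simplicity of $L_\lambda$ as an $\h$-module, plus the observation that a simple module over a purely even abelian $\h$ acting by a fixed character is one-dimensional via the parity splitting). There is nothing to add.
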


\begin{definition}\label{irrep_def_max_torus}
	We say a weight $\lambda\in\h_{\ol{0}}^*$ is dominant with respect to $\b$ if there is a finite-dimensional irreducible representation $L$ of $\g$ of highest weight $\lambda$ with respect to $\b$. 
\end{definition}

\textbf{Notation:} \begin{itemize}
	\item If $\h=\h_{\ol{0}}$ and $\lambda$ is dominant we define $L_{\b}(\lambda)$ (or simply $L(\lambda)$ when no confusion arises) to be a simple module of highest weight $\lambda$ with respect to $\b$ such that the highest weight vector is even.
	\item Because of subtleties surrounding parity, if $\h\neq \h_{\ol{0}}$ we write $L_{\b}(\lambda)$ (or $L(\lambda)$) for a fixed choice of irreducible representation of highest weight $\lambda$.  This case will only arise for us when $\g=\q(n)$, and this subtlety will be largely unimportant.
	\item In any case, we write $L_0(\lambda)$ for the even irreducible representation of $\g_0$ of highest weight $\lambda$.
\end{itemize}

\subsection{Odd Reflections in Basic Lie Superalgebras}\label{basic_lie}  Let $\g$ be one of the Lie superalgebras $\g\l(m|n)$, $\o\s\p(m|2n)$, $G(1,2)$, $F(1,3)$, or $D(1,2;\alpha)$.  We refer to these algebras as basic.  What distinguishes the basic algebras in our list is that they admit an even invariant non-degenerate supersymmetric form.  Basic superalgebras have many similarities to classical reductive Lie algebras.  We state what we will need for this paper.

Given a basic Lie superalgebra $\g$ and a Cartan subalgebra $\h_{\ol{0}}$ of $\g_{\ol{0}}$, we get a generalized root system $(\h_{\ol{0}}^*,\Delta)$, see \cite{serganova1996generalizations}.  The Weyl group $W$ of $\h_{\ol{0}}$ will act by symmetries on this root system.  Further, the Borels of $\g$ which contain $\h_{\ol{0}}$ will be in bijection with choices of simple roots in $\Delta$, as in the classical case.  

If $\Sigma\sub\Delta$ is a set of simple roots, and $\alpha\in\Sigma$ is isotropic, then we denote by $r_{\alpha}$ the odd reflection with respect to $\alpha$, which takes $\Sigma$ to a new simple root system $r_{\alpha}(\Sigma)$.  See section 1.4 of \cite{cheng2012dualities} or the axiomatic approach in section 1 of \cite{serganova1996generalizations} for more on how odd reflections change the simple root system.

The main result we will be using regarding simple reflections is the following: let $\b$ be the Borel corresponding to $\Sigma$, and $r_{\alpha}(\b)$ the Borel corresponding to $r_{\alpha}(\Sigma)$.  Further let $\lambda\in\h^*$ be dominant with respect to $\b$. Then we have:
\[
L_{\b}(\lambda)\cong L_{r_{\alpha}(\b)}(\lambda) \ \ \text{ if }(\lambda,\alpha)=0\ \text{ and } \ L_{\b}(\lambda)\cong \Pi L_{r_{\alpha}(\b)}(\lambda-\alpha)\ \ \text{ if } (\lambda,\alpha)\neq0
\]
(see \cite{cheng2012dualities}, lemma 1.40 for a proof and sections 1.4 and 1.5 for more on odd reflections in highest weight theory). 
\subsection{Spherical Supervarieties}\label{spher_var}

See chapter 10 of \cite{carmeli2011mathematical} for generalities on superschemes.

\begin{definition}
	Define a (complex) affine supervariety to be an affine superscheme $X=\Spec A$ for a finitely-generated commutative superalgebra $A=A_{\ol{0}}\oplus A_{\ol{1}}$ over $\C$ such that if $f\in A\setminus(A_{\ol{1}})$, then $f$ is not a zero divisor.  In particular, $\Spec A/(A_{\ol{1}})$ is integral.
	
Here, $(A_{\ol{1}})$ is the ideal of $A$ generated by all odd elements $A$.  We will also use the notation $A=\C[X]$ for the algebra of functions on $X$.

If $X$ is an affine supervariety, we will write $X(\C)$ for its set of closed points.
\end{definition}

\begin{example}
 If $V$ is a super vector space then we may consider it as an affine supervariety with coordinate ring  $S^\bullet V^*$.
\end{example}

\begin{definition}
	Let $X$ be an affine supervariety, $x\in X(\C)$.  Define $T_xX$ to be the super vector space of point-derivations at $x$, i.e. all (not-necessarily even) linear maps $\delta:\C[X]\to\C$ such that $\delta(fg)=\delta(f)g(x)+(-1)^{\ol{\delta}\cdot \ol{f}}f(x)\delta(g)$.
\end{definition}

\begin{definition}Let $\g$ be a Lie superalgebra, $X$ an affine supervariety.  An action of $\g$ on $X$ is an algebra homomorphism $\g\to\text{Vec}(X)$, where $\text{Vec}(X)$ is the Lie superalgebra of vector fields on $X$.  In this case, we say that $\g$ acts on $X$.  
\end{definition}

\begin{remark}
	In many ways it is more natural to consider the action of a Lie supergroup on a variety, which in particular gives rise to a Lie superalgebra action as described above.  However for the purposes of this paper it is not necessary.  In future work we will consider actions of supergroups to obtain stronger results.
\end{remark}
\begin{definition}
For an open subvariety $U$ of $X$, we say that $U$ is a $\g$-orbit if for every closed point $x\in X(\C)$, the natural restriction map
\[
\g\to T_xX
\]
is surjective if and only if $x\in U(\C)$.  We say that $\g$ has an open orbit on $X$ if there exists an open subvariety $U$ of $X$ which is an open orbit for $\g$.  Note that by lower semicontinuity of the rank of the map $\g\to T_xX$, $\g$ has an open orbit on $X$ if and only if $\g\to T_xX$ is surjective for some closed point $x$.

If $x\in X(\C)$, we define the stabilizer of $x$ in $\g$ to be $\ker(\g\to T_xX)$.  This is a subalgebra of $\g$.
\end{definition}

\begin{definition} Let $\g$ be a quasireductive Lie superalgebra acting on $X$, $\b$ a Borel subalgebra of $\g$.  Then we say that $X$ is \emph{spherical} for the action of $\g$ with respect to $\b$, if $\b$ has an open orbit on $X$.  We will sometimes simply say $X$ is spherical for $\g$, or even $X$ is spherical, if $\g$ and $\b$ are clear from context.
\end{definition}

We would like a representation-theoretic characterization of when a given (affine) variety is spherical.  Classically, for an affine variety, sphericity is equivalent to the algebra of functions being multiplicity-free.  The following example shows why this is not sufficient in the super world.

\begin{example}
	Consider the space $GL_{0|n}$ with the natural $\g\l(n)$-action.  Then the algebra of functions is, as a $\g\l(n)$-module, $\bigoplus\limits_{k}\Pi^k\Lambda^kGL_{n}^*$.   This is a multiplicity-free $\g\l(n)$-module, however the space is not spherical.  The issue, as the following proposition indicates, is the existence of nilpotent highest weight functions.
\end{example}

The following result is proven in \cite{sherman2019sphericalsupervar}.

\begin{prop}\label{pspher_char} Let $\g$ be a quasireductive Lie superalgebra acting on an affine supervariety $X$.  Suppose that $X_{\ol{0}}$ is a spherical $\g_{\ol{0}}$-variety.  Let $\b$ be a Borel subalgebra of $\g$.  Then $X$ is spherical for $\g$ with respect to $\b$ only if for each $\b$-highest weight submodule $V$ of $\C[X]$, there is a non-nilpotent function of highest weight in $V$.    

In particular, if $\g$ has a maximal even torus then $X$ is spherical for $\g$ with respect to $\b$ only if all $\b$-highest weight vectors are non-nilpotent functions.
\end{prop}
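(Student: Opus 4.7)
The plan is to exploit the classical characterization of sphericity for the reduced variety $X_{\ol{0}}$, combined with the observation that by the definition of affine supervariety the ideal $I = (A_{\ol{1}})$ is precisely the nilradical of $\C[X]$: a function $f \in \C[X]$ is non-nilpotent if and only if its restriction $\ol{f} \in \C[X_{\ol{0}}]$ is non-zero. Since $I$ is $\g$-invariant, reduction modulo $I$ is $\g_{\ol{0}}$-equivariant and sends $\b$-highest weight vectors to $\b_{\ol{0}}$-highest weight vectors of the same weight. The strategy throughout is to pull back the classical dichotomy ``either zero, or non-vanishing on the open $B$-orbit'' from $X_{\ol 0}$ to $X$.

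For the forward implication, suppose $X$ is $\b$-spherical with open $\b$-orbit $U$, and pick a closed point $x_0 \in U(\C) \subseteq X_{\ol{0}}(\C)$. The surjection $\b \to T_{x_0} X$ restricts to a surjection $\b_{\ol{0}} \to T_{x_0} X_{\ol{0}}$, placing $x_0$ in the open $\b_{\ol{0}}$-orbit of the $\g_{\ol{0}}$-spherical variety $X_{\ol{0}}$. Given a $\b$-highest weight submodule $V \subseteq \C[X]$ of highest weight $\lambda$, the classical theorem applied to $X_{\ol{0}}$ says the $\b_{\ol{0}}$-highest weight space of $\C[X_{\ol{0}}]$ of weight $\lambda$ is at most one-dimensional and, when non-zero, spanned by a function not vanishing at $x_0$. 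Taking any non-zero highest weight vector $f \in V$, injectivity of $\C[X] \to \C[U]$ (valid because $U$ is defined by the non-vanishing of an even non-zero-divisor, under our integrality hypothesis) together with transitivity of $\b$ on $U$ shows that $f$ does not vanish identically at the closed points of $U$; hence $\ol{f}(x_0) \neq 0$ and $f$ is non-nilpotent. In the maximal even torus case $V^+_\lambda$ is one-dimensional so this forces the unique highest weight vector to be non-nilpotent; in general the same argument exhibits at least one non-nilpotent highest weight vector in $V^+_\lambda$, as required.

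For the converse when $\h = \h_{\ol{0}}$, I would argue by contrapositive: if $X$ is not $\b$-spherical while $X_{\ol{0}}$ remains $\b_{\ol{0}}$-spherical, pick $x_0$ in the open $\b_{\ol{0}}$-orbit of $X_{\ol{0}}$. Surjectivity of $\b_{\ol{0}} \to T_{x_0} X_{\ol{0}}$ forces the failure of sphericity to occur in the odd direction, so $\b_{\ol{1}} \to (T_{x_0} X)_{\ol{1}}$ has non-zero cokernel. Choose an odd covector $\omega \in (T_{x_0} X)_{\ol{1}}^*$ annihilating the image of $\b_{\ol{1}}$, lift it to an odd function $f \in I \cap \m_{x_0}$ with $df|_{x_0} = \omega$, and extract a $\b$-highest weight vector from the $U(\b)$-submodule generated by $f$, using local finiteness of the nilpotent $\n^+$-action on $\h_{\ol{0}}$-weight pieces. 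The resulting highest weight vector lies in the $\g$-invariant ideal $I$ and is therefore nilpotent, contradicting the assumption. The main obstacle is this last extraction: one must guarantee the highest weight vector obtained from $U(\b) \cdot f$ is non-zero by propagating the infinitesimal information in $\omega$ to a highest weight component, and verify local finiteness of $\n^+$ on the relevant weight pieces --- technicalities that the author's forthcoming paper will need to address in detail.
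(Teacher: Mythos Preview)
The paper does not actually prove this proposition: it is announced with the sentence ``In a forthcoming paper on spherical supervarieties, we will show the following result,'' and no argument is given. So there is no in-paper proof to compare your sketch against.

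Evaluated on its own, your outline has the right architecture but the forward implication contains a genuine gap. You write that ``injectivity of $\C[X]\to\C[U]$ together with transitivity of $\b$ on $U$ shows that $f$ does not vanish identically at the closed points of $U$.'' This is the crux, and it is not justified: $\b$ is a Lie superalgebra, not a group, so there is no literal transitivity to invoke, and more importantly $f|_U\neq 0$ in $\C[U]$ does \emph{not} imply $\ol f|_{U(\C)}\neq 0$ --- the function could lie in the ideal $(A_{\ol 1})$ and still restrict nontrivially to the open subscheme. What one actually needs (in the case $\h=\h_{\ol 0}$) is an infinitesimal argument: if $f$ is a $\b$-eigenvector with $f(x_0)=0$, then $\b\cdot f\subseteq\C f\subseteq\m_{x_0}$, and surjectivity of $\b\to T_{x_0}X$ forces $f\in\m_{x_0}^2$; iterating (using that vector fields lower $\m_{x_0}$-adic order by one and that $\b\twoheadrightarrow(\m_{x_0}/\m_{x_0}^2)^*$ kills any nonzero symbol) gives $f\in\bigcap_k\m_{x_0}^k$, and one then needs $\C[X]\hookrightarrow\widehat{\mathcal O}_{x_0}$, which requires a smoothness/integrality argument. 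None of this is in your sketch. In the general case $\h\neq\h_{\ol 0}$ the line $\C f$ is no longer $\b$-stable, and your claim that ``the same argument exhibits at least one non-nilpotent highest weight vector'' needs a separate idea.

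For the converse you honestly flag the real obstacle --- extracting a nonzero highest weight vector inside $I$ from the odd covector $\omega$ --- and that is indeed where the work lies; your sketch there is at the same level of detail as the paper itself, which simply defers the whole statement.
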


\begin{cor}\label{cmult_free} In the context of \cref{pspher_char}, if $\h=\h_{\ol{0}}$ then the socle of $\C[X]$ is a multiplicity-free $\g$-module.\end{cor}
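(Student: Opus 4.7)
The plan is to combine the non-nilpotence of highest weight vectors furnished by \cref{pspher_char} with the classical multiplicity-freeness of $\C[X_{\ol{0}}]$ as a $\g_{\ol{0}}$-module, using the fact that a highest weight vector vanishing on $X_{\ol{0}}$ is forced to be nilpotent.

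First I would invoke the ``in particular'' clause of \cref{pspher_char}: since $\h=\h_{\ol{0}}$ means $\g$ has a maximal even torus, sphericity of $X$ forces every $\b$-highest weight vector of $\C[X]$ to be a non-nilpotent function. Two consequences follow. (i) Any odd element $f\in A_{\ol{1}}$ satisfies $f^2=0$ by supercommutativity, so odd highest weight vectors are always nilpotent; hence no summand of the socle can have an odd highest weight vector, which rules out both $\Pi L(\lambda)$ as a summand of the socle and the combination $L(\lambda)\oplus \Pi L(\lambda)$. (ii) Every element of the ideal $(A_{\ol{1}})$ is nilpotent: since $A$ is finitely generated, its odd part is generated over $A_{\ol{0}}$ by finitely many super-anticommuting elements $\xi_1,\ldots,\xi_N$ with $\xi_i^2=0$, so $(A_{\ol{1}})^{N+1}=0$. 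Equivalently, any highest weight vector whose image in $\C[X_{\ol{0}}]=A/(A_{\ol{1}})$ vanishes must be nilpotent.

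Now suppose for contradiction that the socle contains $L(\lambda)^{\oplus 2}$. By \cref{cirrep_maxtorus} each summand contributes a one-dimensional even highest weight line, so we obtain two linearly independent even $\b$-highest weight vectors $f_1,f_2$ of weight $\lambda$ in $\C[X]$. Since $\b\supseteq\b_{\ol{0}}$ and the ideal $(A_{\ol{1}})$ is $\g_{\ol{0}}$-stable, the restrictions $\bar f_1,\bar f_2\in\C[X_{\ol{0}}]$ are $\b_{\ol{0}}$-highest weight vectors for the $\g_{\ol{0}}$-action of weight $\lambda$, and both are non-zero by (ii) together with the first paragraph. Since $X_{\ol{0}}$ is $\g_{\ol{0}}$-spherical, classical theory gives that $\C[X_{\ol{0}}]$ is multiplicity-free over $\g_{\ol{0}}$, forcing $\bar f_2=c\bar f_1$ for some $c\in\C$. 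Then $f_2-cf_1$ is a non-zero highest weight vector lying in $(A_{\ol{1}})$, hence nilpotent --- contradicting the first paragraph. Together with (i), this establishes multiplicity-freeness of the socle.

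The argument is essentially mechanical, and the only place that demands genuine care is the cascade of identifications between $\g$-highest weight conditions in $\C[X]$, $\g_{\ol{0}}$-highest weight conditions after restriction, and non-vanishing modulo $(A_{\ol{1}})$; none of this is expected to present a real obstacle, since the relevant structures are compatible by construction.
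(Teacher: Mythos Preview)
Your proposal is correct and follows essentially the same route as the paper: take two independent highest weight vectors of the same weight, restrict to $X_{\ol{0}}$, use classical sphericity of $X_{\ol{0}}$ to force the restrictions to be proportional, and deduce that a nontrivial linear combination is a nilpotent highest weight vector, contradicting \cref{pspher_char}. You supply more detail than the paper does --- explicitly ruling out odd highest weight vectors and verifying that $(A_{\ol{1}})$ consists of nilpotents --- but the core argument is identical.
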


\begin{proof}
Suppose that $L$ is an irreducible $\g$-module of highest weight $\lambda$ with multiplicity greater than one in the socle of $\C[X]$.  Using \cref{pspher_char}, we find there exists two linearly independent highest weight functions $f,g$ on $\C[X]$ of weight $\lambda$ which are non-nilpotent.  Therefore, they each restrict to $X_{\ol{0}}$ as non-zero functions which are $\b_{\ol{0}}$-eigenfunctions of weight $\lambda$.  Since $X_{\ol{0}}$ is spherical, there is only one such non-zero function on $X_{\ol{0}}$ up to scalar, and therefore $f-cg$ must be a nilpotent highest weight function on $X$ for some $c\in\C$.  By \cref{pspher_char}, this implies $f=cg$, a contradiction.
\end{proof}

%
%

\section{Spherical Representations and their Properties}\label{sec_tools}
	
\subsection{Spherical Representations}  	
\begin{definition}\label{spher_reps}
	Let $V$ be a super vector space, $\g$ a quasireductive Lie superalgebra, $\b\sub\g$ a Borel subalgebra of $\g$, and $\rho:\g\to\g\l(V)$ a representation of $\g$.  Then we say the triple $(V,\g,\rho)$ is \emph{spherical} with respect to $\b$ if $V$ is a non-zero vector space, and it is a spherical variety for the Lie superalgebra $\rho(\g)+\C\id_{V}$ with respect to $\rho(\b)+\C\id_{V}$.  
	
	Equivalently, there exists a vector $v\in V_{\ol{0}}$ such that $(\rho(\b)+\C\id_{V})\cdot v=V$.  A vector $v$ satisfying this condition will be called a \emph{spherical vector}.
	
	In general we say a $\g$-module $V$ is spherical if there exists a Borel $\b\sub\g$ such that $(V,\g,\rho)$ is spherical with respect to $\b$, where $\rho:\g\to\g\l(V)$ defines the $\g$-action.  
\end{definition} 

\begin{remark}
	By abuse of language, we will often refer to a super vector space $V$ as being spherical when the algebra which acts on it is clear from context. We may also omit the representation $\rho$ and just say that $(V,\g)$ is spherical, where $V$ is a $\g$-module and the action is clear from context.
	
	If $V$ is spherical of even (resp. odd) highest weight $\lambda$ with respect to $\b$, then we will say that $\lambda$ is an even (resp. an odd) spherical weight for $\g$ with respect to $\b$, or simply that $\lambda$ is spherical when the choice of Borel and parity of the highest weight is clear from context.  
\end{remark}

\begin{remark}
	In contrast to the classical world, note that a spherical $\g$-module is, in general, not spherical for all Borels (this almost never happens		).  However, if $V$ is spherical for $\g$ with respect to $\b$, then it is also spherical with respect to any conjugate of $\b$.
\end{remark}

\begin{lemma}\label{lgeoequiv} Let $(V,\g,\rho)$ be a spherical representation with respect to $\b$.   
	\begin{enumerate}
		\item If $V'$ is a super vector space and $\psi:V\to V'$ is an isomorphism, let $\Psi:\g\l(V)\to\g\l(V')$ denote the induced isomorphism of algebras.  Then $(V',\g,\Psi\circ\rho)$ is spherical with respect to $\b$.
		
		\item Let $\tau$ be an automorphism of $\g$. Then $(V,\g,\rho\circ\tau)$ is spherical with respect to $\tau^{-1}(\b)$.
	\end{enumerate}
	
\end{lemma}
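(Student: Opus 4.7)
The plan is to verify both parts directly from the definition: a triple $(V,\g,\rho)$ is spherical with respect to $\b$ exactly when there is an even vector $v\in V_{\ol{0}}$ with $(\rho(\b)+\C\id_V)v=V$. Both claims then reduce to transporting a single spherical vector along an isomorphism and checking that the action is intertwined appropriately.

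For part (1), I would start with a spherical vector $v\in V_{\ol{0}}$ for $(V,\g,\rho)$ with respect to $\b$, and propose $\psi(v)$ as a spherical vector for $(V',\g,\Psi\circ\rho)$. The key structural fact is that the isomorphism $\Psi:\g\l(V)\to\g\l(V')$ induced by $\psi$ is given by conjugation, $\Psi(T)=\psi\circ T\circ\psi^{-1}$, so for every $X\in\g$,
\[
(\Psi\circ\rho)(X)\cdot\psi(v)=\psi\bigl(\rho(X)v\bigr),
\]
and also $\Psi(\id_V)=\id_{V'}$. Applying this on all of $\b+\C\id$ and using $\psi$-linearity gives
\[
\bigl((\Psi\circ\rho)(\b)+\C\id_{V'}\bigr)\psi(v)=\psi\bigl((\rho(\b)+\C\id_V)v\bigr)=\psi(V)=V'.
\]
Since $\psi$ is an even isomorphism, $\psi(v)\in V'_{\ol{0}}$, completing part (1).

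For part (2), I would keep the same vector $v\in V_{\ol{0}}$ and observe that $\sigma^{-1}$ is a bijection of $\g$ sending $\sigma^{-1}(\b)$ onto $\b$. Consequently $(\rho\circ\sigma)(\sigma^{-1}(\b))=\rho(\b)$, and adding $\C\id_V$ on both sides preserves this equality, so
\[
\bigl((\rho\circ\sigma)(\sigma^{-1}(\b))+\C\id_V\bigr)v=(\rho(\b)+\C\id_V)v=V.
\]
Thus $v$ witnesses sphericity of $(V,\g,\rho\circ\sigma)$ with respect to $\sigma^{-1}(\b)$.

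There is no real obstacle here; both statements are definitional once one notes that $\Psi$ acts by conjugation and that pulling back by $\sigma$ commutes with pulling back $\b$ by $\sigma^{-1}$. The only points requiring a moment's care are that $\psi$ preserves parity (so the candidate spherical vector remains even), and that the scalar summand $\C\id_V$ is transported correctly under $\Psi$, both of which are immediate from the constructions.
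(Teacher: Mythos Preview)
Your argument is correct and is exactly the direct verification the paper has in mind; the paper itself simply declares the proof straightforward and omits the details you have supplied.
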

\begin{proof} 
The proof is straightforward.
\end{proof}
By \cref{lgeoequiv}, sphericity is determined by the image of a Lie superalgebra under the representation.  Therefore we make the following definition.
\begin{definition}\label{equiv_def}
We say that two spherical representations $(V,\g,\rho)$ and $(V',\g',\rho')$ are \emph{equivalent} if there exists an isomorphism of super vector spaces $\psi:V\to V'$ such that if $\Psi:\g\l(V)\to\g\l(V')$ is the induced map, then $\rho'(\g')+\C\id_{V'}=\Psi(\rho(\g))+\C\id_{V'}$.
\end{definition}

We may now state the main problem this paper tries to solve:

\textbf{Problem}: Find all equivalence classes of indecomposable spherical representations $V$ which are spherical with respect to one of the Lie superalgebras $\g\l(m|n)$, $\o\s\p(m|2n)$, simple basic exceptional, $\p(n)$, or $\q(n)$.  We only look for cases when $V_{\ol{1}}\neq 0$.

\subsection{Properties of Spherical Representations}  Here we collect some properties of spherical modules.  First a definition.

\begin{definition}
	We say $(V,\g,\rho)$ is \emph{numerically spherical} if $V_{\ol{0}}$ is a spherical $\g_{\ol{0}}$-module and $\dim V_{\ol{1}}\leq \max_{\b}\dim\b_{\ol{1}}$.  
\end{definition}

\begin{lemma}\label{ldim_res} Spherical modules are numerically spherical.  Any composition factor of a spherical module is numerically spherical.\end{lemma}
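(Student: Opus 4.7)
The plan is to unpack the defining condition of sphericity along the parity $\Z_2$-grading of $V$ and of $\rho(\g)$. Assume $(V,\g,\rho)$ is spherical with respect to $\b$, witnessed by an even vector $v\in V_{\ol{0}}$ with $(\rho(\b)+\C\id_V)v = V$. Since $v$ and $\id_V$ are even while $\rho(\b_{\ol{0}})$ and $\rho(\b_{\ol{1}})$ carry their respective parities, separating the two sides into even and odd components yields
\[
V_{\ol{0}} = (\rho(\b_{\ol{0}}) + \C\id_V)v, \qquad V_{\ol{1}} = \rho(\b_{\ol{1}})v.
\]
The first equation exhibits $V_{\ol{0}}$ as a spherical module for the Borel $\b_{\ol{0}}$ of $\g_{\ol{0}}$, and hence for $\g_{\ol{0}}$. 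The second makes $X\mapsto \rho(X)v$ a surjection $\b_{\ol{1}}\to V_{\ol{1}}$, giving the dimension bound $\dim V_{\ol{1}}\leq \dim\b_{\ol{1}}\leq \max_{\b'}\dim\b'_{\ol{1}}$.

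For composition factors, take a Jordan--Hölder filtration $0 = V_0\subset V_1\subset\cdots\subset V_n = V$ of $V$ as a $\g$-module with simple subquotients $L^{(i)} = V_i/V_{i-1}$. Since we are working in the category of modules semisimple over $\g_{\ol{0}}$, each short exact sequence $0\to V_{i-1}\to V_i\to L^{(i)}\to 0$ splits as a sequence of super $\g_{\ol{0}}$-modules, and iterating produces $\g_{\ol{0}}$-module isomorphisms
\[
V_{\ol{0}} \cong \bigoplus_i L^{(i)}_{\ol{0}}, \qquad V_{\ol{1}} \cong \bigoplus_i L^{(i)}_{\ol{1}}.
\]
The odd dimension bound $\dim L^{(i)}_{\ol{1}}\leq \dim V_{\ol{1}}$ is then immediate. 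For the even sphericity of each $L^{(i)}_{\ol{0}}$, the key observation is that a spherical decomposition passes to summands: given a $\g_{\ol{0}}$-invariant decomposition $V_{\ol{0}} = W_1\oplus W_2$ and a spherical vector $v = w_1 + w_2$, both $\rho(\b_{\ol{0}})$ and $\id_V$ preserve each $W_j$, so projecting $(\rho(\b_{\ol{0}})+\C\id_V)v = V_{\ol{0}}$ onto $W_j$ gives $(\rho(\b_{\ol{0}}) + \C\id_{W_j})w_j = W_j$. Applying this iteratively to the decomposition above yields the spherical property for each $L^{(i)}_{\ol{0}}$.

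No step of the argument presents a real obstacle; the only subtle point to check is that the $\C\id_V$ component in the definition of sphericity descends correctly to $\g_{\ol{0}}$-summands, which is immediate because $\id_V$ restricts to the identity on any invariant subspace.
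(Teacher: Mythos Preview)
Your argument is correct and matches the paper's approach. For the first statement the paper does exactly what you do: split $(\b+\C\id_V)\cdot v$ by parity to get $(\b_{\ol{0}}+\C\id_V)\cdot v=V_{\ol{0}}$ and $\b_{\ol{1}}\cdot v=V_{\ol{1}}$. For the second statement the paper simply says ``straightforward''; your argument via the $\g_{\ol{0}}$-splitting of a Jordan--H\"older filtration and the observation that sphericity of a $\g_{\ol{0}}$-module passes to direct summands is a correct and natural way to fill in that omission.
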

\begin{proof} For the first statement, if $(V,\g,\rho)$ is spherical with respect to $\b$, then if $v\in V_{\ol{0}}$ is a spherical vector we have 
	\[
	(\b+\C\id_{V})\cdot v=(\b_{\ol{0}}+\C\id_V+\b_{\ol{1}})\cdot v=(\b_{\ol{0}}+\C\id_{V})\cdot v\oplus \b_{\ol{1}}\cdot v=V_{\ol{0}}\oplus V_{\ol{1}}.
	\]
	The second statement is straightforward.	
\end{proof}

\begin{remark}
	Note that if $\g$ is basic, then all Borels of $\g$ have the same odd dimension, while in general this is no longer true.  In particular, for $\p(n)$ Borel subalgebras can have odd dimension between $\frac{n(n-1)}{2}$ and $\frac{n(n+1)}{2}$.
\end{remark}

\begin{lemma}\label{quot} The quotient of a spherical representation remains spherical.\end{lemma}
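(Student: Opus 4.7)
The plan is to simply push the spherical vector forward through the quotient map. Let $(V,\g,\rho)$ be spherical with respect to a Borel $\b$, and let $v\in V_{\ol{0}}$ be a spherical vector, i.e.\ $(\rho(\b)+\C\id_V)\cdot v=V$. Suppose $W\subsetneq V$ is a proper $\g$-submodule, and let $\pi:V\to V/W$ be the quotient map with induced representation $\bar\rho:\g\to\g\l(V/W)$.

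First I would observe that $\pi$ is $\g$-equivariant and intertwines $\id_V$ with $\id_{V/W}$. Applying $\pi$ to both sides of the equation $(\rho(\b)+\C\id_V)\cdot v=V$ then yields
\[
(\bar\rho(\b)+\C\id_{V/W})\cdot \pi(v) \;=\; \pi\bigl((\rho(\b)+\C\id_V)\cdot v\bigr) \;=\; \pi(V) \;=\; V/W.
\]
Since $v\in V_{\ol{0}}$, its image $\pi(v)$ lies in $(V/W)_{\ol{0}}$, so $\pi(v)$ is a spherical vector for the quotient with respect to the same Borel $\b$. This gives sphericity of $(V/W,\g,\bar\rho)$ directly from \cref{spher_reps}.

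There is essentially no obstacle here: the argument uses nothing beyond the functoriality of the quotient construction and the definition of a spherical vector. The only mild point to flag is that one must have $V/W\neq 0$ for the quotient to qualify as a representation in the sense of \cref{spher_reps}, which is why the hypothesis should be read as concerning proper submodules $W$.
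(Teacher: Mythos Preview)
Your proof is correct and follows exactly the same approach as the paper: the image of a spherical vector under the quotient map is a spherical vector in the quotient. The paper states this in a single sentence, and you have simply written out the details.
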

\begin{proof}The image of a spherical vector under the quotient map provides a spherical vector in the quotient.\end{proof}

\begin{lemma}\label{cdual}
	If $\g$ is basic, then an irreducible representation $V$ is spherical if and only if $V^*$ is.  If $(V,\g)$ is spherical, then $(V^*,\g)$ is equivalent to it.
\end{lemma}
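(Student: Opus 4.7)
The plan is to exhibit a single automorphism of $\g$ that realizes duality at the representation-theoretic level and then combine it with \cref{lgeoequiv}. Since $\g$ is basic, it carries a \emph{Chevalley involution} $\sigma:\g\to\g$: an even order-two automorphism which preserves $\h=\h_{\ol{0}}$, acts by $-1$ on it, and sends any Borel $\b\supset\h_{\ol{0}}$ to its opposite $\b^{op}$. Concretely $\sigma$ is determined on a choice of Chevalley generators by $e_\alpha\mapsto -f_\alpha$, $f_\alpha\mapsto -e_\alpha$; existence is a standard fact for each of $\g\l(m|n)$, $\o\s\p(m|2n)$, $G(1,2)$, $F(1,3)$, $D(1,2;\alpha)$.

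First I would analyze the ``twisted'' representation $V^\sigma$, meaning $V$ equipped with the action $\rho\circ\sigma$. For $V=L_\b(\lambda)$ with even highest weight vector $v_\lambda$, a direct weight calculation shows $v_\lambda$ is still a highest weight vector for $V^\sigma$, now with respect to $\b^{op}$ and with weight $-\lambda$ (since $\sigma$ acts by $-1$ on $\h_{\ol{0}}$). A parallel computation identifies $v_\lambda^*\in (V^*)_{\ol{0}}$ as a highest weight vector of $V^*$ of weight $-\lambda$ with respect to $\b^{op}$. Since $\h=\h_{\ol{0}}$ in the basic setting, \cref{hw_thm} and \cref{cirrep_maxtorus} deliver a $\g$-module isomorphism $\psi:V^\sigma\xrightarrow{\sim}V^*$ sending $v_\lambda$ to a nonzero scalar multiple of $v_\lambda^*$; in particular $\psi$ is an isomorphism of super vector spaces $V\to V^*$.

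With $\sigma$ and $\psi$ in hand, the rest is bookkeeping via \cref{lgeoequiv}. Given $(V,\g,\rho)$ spherical with respect to $\b$, part (2) of that lemma yields sphericity of $(V,\g,\rho\circ\sigma)$ with respect to $\sigma^{-1}(\b)=\b^{op}$, and part (1) applied to $\psi$ then transports this to sphericity of $(V^*,\g,\rho^*)$ with respect to $\b^{op}$. The reverse implication follows by applying the same argument to $V^*$ and using $V^{**}\cong V$. For equivalence in the sense of \cref{equiv_def}, let $\Psi:\g\l(V)\to\g\l(V^*)$ be the algebra map induced by $\psi$; by construction $\Psi\circ\rho\circ\sigma=\rho^*$, hence $\Psi(\rho(\g))=\rho^*(\sigma^{-1}(\g))=\rho^*(\g)$, which gives $\Psi(\rho(\g)+\C\id_V)=\rho^*(\g)+\C\id_{V^*}$.

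The only genuine subtlety is the parity accounting: it is essential that $\sigma$ is \emph{even}, so that it does not flip parities, and that $v_\lambda$ is chosen even under the convention of \cref{irrep_def_max_torus}, which is available to us precisely because $\h=\h_{\ol{0}}$ for basic $\g$. The single conceptual input is the existence of the Chevalley involution; once it is in place there is no further obstacle, and the argument really does reduce to \cref{lgeoequiv}.
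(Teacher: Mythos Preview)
Your proof is correct and follows essentially the same approach as the paper: invoke the Chevalley-type automorphism $\sigma$ of a basic $\g$ acting by $-1$ on $\h_{\ol{0}}$, observe that $V^\sigma\cong V^*$ for irreducible $V$ (using that highest weight spaces are one-dimensional since $\h=\h_{\ol{0}}$), and conclude via \cref{lgeoequiv}. You have simply filled in more of the bookkeeping—the explicit highest-weight comparison, the transport along $\psi$, and the image computation $\Psi(\rho(\g))=\rho^*(\g)$—than the paper's two-line sketch provides.
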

\begin{proof}
	In this case, there exists an automorphism $\tau$ of $\g$ which acts by multiplication by $(-1)$ on a Cartan subalgebra.  Since highest weights spaces of irreducible representations are one-dimensional for a basic algebra, if $V$ is irreducible we have, $V^\tau\cong V^*$.  The result now follows from \cref{lgeoequiv}.
\end{proof}
\begin{remark}
	If we drop the condition of irreducibility in \cref{cdual}, the argument breaks down, since we no longer have $V^\tau=V^*$. 
	
	For example, let $\g=\g\l(1|1)$ and consider the representation of $\g$ on $\C^{1|1}=\C v\oplus \C w$, where $v$ is even and $w$ odd, as follows.  Let $I=\begin{bmatrix}1 & 0\\0 & 1\end{bmatrix}$ act by $0$, $h=\begin{bmatrix} 1 & 0\\0 & -1\end{bmatrix}$ act by $1$ on $w$ and $-1$ on $v$, and $x=\begin{bmatrix}0 & 1\\0 & 0\end{bmatrix}$ send $v$ to $w$ and $y=\begin{bmatrix}0 & 0 \\1 & 0\end{bmatrix}$ act by $0$.
	
	Then this is representation is spherical (with respect to the Borel $\b=\C\langle h,I,x\rangle$), but the dual is not spherical with respect to any Borel subalgebra.
\end{remark}

\begin{remark}
	By \cref{cdual}, when $\g$ is basic it suffices to consider irreducible representations up to their dual.  Note that \cref{cdual} does not apply to the non-basic algebras $\p(n)$ or $\q(n)$. Although $\q(n)$ does admit an automorphism $\tau$ which acts by $(-1)$ on $\h_{0}$, because highest weight spaces of irreducible representations need not be one-dimensional,  there are situations when $V$ is irreducible but $V^\tau\ncong V^*$.
\end{remark}

\begin{lemma}\label{lspher_vec} Let $\r$ be a linear Lie superalgebra, i.e. $\r\sub\g\l(V)$ for a super vector space $V$.  Suppose we have $\r_{\ol{0}}v=V_{\ol{0}}$.  If there exists $w\in V_{\ol{0}}$ such that $\r\cdot w=V$, then $\r\cdot v=V$.  
\end{lemma}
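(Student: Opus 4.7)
The plan is to combine a rank-semicontinuity argument on $V_{\ol{0}}$ with the observation that $\r$ is stable under conjugation by $\exp(\r_{\ol{0}})$. Define
\[
U = \{u \in V_{\ol{0}} : \r \cdot u = V\},
\]
the locus where the linear map $\r \to V$, $Y \mapsto Yu$, attains the maximal rank $\dim V$. Since this is a Zariski-open condition, $U$ is Zariski-open in $V_{\ol{0}}$, and since $w \in U$ it is nonempty; being a nonempty Zariski-open subset of the irreducible variety $V_{\ol{0}} \cong \C^{\dim V_{\ol{0}}}$, it is also dense in the analytic topology.

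Next I would use the hypothesis $\r_{\ol{0}} v = V_{\ol{0}}$ to find a point of $U$ in the $\exp(\r_{\ol{0}})$-orbit of $v$. The holomorphic map
\[
\phi : \r_{\ol{0}} \to V_{\ol{0}}, \qquad X \mapsto \exp(X) v,
\]
has differential at $0$ equal to the linear map $X \mapsto Xv$, which is surjective by hypothesis. The holomorphic inverse function theorem then implies that the image of $\phi$ contains an analytic neighborhood of $v = \phi(0)$. Intersecting this neighborhood with the analytically dense set $U$, I obtain some $X \in \r_{\ol{0}}$ such that $g := \exp(X)$ satisfies $\r(gv) = V$.

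To conclude, because $\r$ is a Lie subalgebra of $\g\l(V)$, $\operatorname{ad}(X)$ preserves $\r$, and so its exponential $\Ad(g) = \exp(\operatorname{ad}(X))$ also preserves $\r$; in other words $g^{-1} \r g = \r$. Therefore
\[
\r v \;=\; g^{-1}\bigl(g \r g^{-1}\bigr)(g v) \;=\; g^{-1}\,\r(gv) \;=\; g^{-1} V \;=\; V,
\]
which is the desired conclusion. The only delicate point is the transfer from Zariski-openness of $U$ to analytic density, needed to intersect $U$ with the holomorphic image of $\phi$; once one invokes this standard property of nonempty Zariski-opens in $\C^n$, the rest is a direct application of the inverse function theorem together with the compatibility of $\operatorname{ad}$ and $\Ad$.
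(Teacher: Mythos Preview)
Your proof is correct and follows essentially the same strategy as the paper: move $v$ by an element of $\exp(\r_{\ol{0}})$ to a point where $\r$ acts with full rank, then use $\Ad$-invariance of $\r$ to pull the conclusion back to $v$. The only difference is cosmetic: the paper argues that the $\exp(\r_{\ol{0}})$-orbits of $v$ and $w$ are both Zariski-open in $V_{\ol{0}}$ and hence coincide (so $v=xw$ for some $x$), whereas you intersect the analytic image of $X\mapsto\exp(X)v$ with the Zariski-dense set $U=\{u:\r u=V\}$. Both routes amount to the observation that two nonempty open subsets of an irreducible variety must meet; your version makes the use of the inverse function theorem explicit, while the paper's is terser but relies on the same underlying fact.
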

\begin{proof}
	Exponentiate $\r_{\ol{0}}$ to $R_{\ol{0}}=\exp(\r_{\ol{0}})\sub GL(V)_{\ol{0}}$.  Then by assumption, the orbits of both $v$ and $w$ under $R_{\ol{0}}$ are open in $V_{\ol{0}}$ and since we work in the Zariski topology they must lie in the same orbit. Therefore there exists $x\in R_{\ol{0}}$ such that $v=x\cdot w$.  Hence 
	\begin{eqnarray*}
	\r\cdot v& = &\{Xv:X\in\r\}\\
	         & = &\{X(xw):X\in\r\}\\
	         & = &\{x\Ad(x^{-1})(X)w):X\in\r\}\\
	         & = &\{x(Xw):X\in\r\}\\
	         & = &x(\r\cdot w)=V
	\end{eqnarray*}
\end{proof}

\begin{cor}\label{cspher_vec}
	Let $V$ be a $\g$-module, $\b$ a Borel of $\g$ such that $V_{\ol{0}}$ is spherical for $\g_{\ol{0}}$.  Then if $v\in V_{\ol{0}}$ is any spherical vector for $\b_{\ol{0}}$ and $\b\cdot v\neq V$, then $V$ is not spherical with respect to $\b$.
\end{cor}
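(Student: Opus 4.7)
The plan is to deduce Corollary \ref{cspher_vec} as a direct application of Lemma \ref{lspher_vec} via the contrapositive. First I would set $\r := \rho(\b) + \C\id_V \subseteq \g\l(V)$; this is precisely the linear Lie superalgebra whose open orbit on $V$ controls sphericity of $(V,\g,\rho)$ with respect to $\b$, in the sense of Definition \ref{spher_reps}. The hypothesis that $v\in V_{\ol{0}}$ is a spherical vector for $\b_{\ol{0}}$ (with $V_{\ol{0}}$ spherical as a $\g_{\ol{0}}$-module) unpacks to $\rho(\b_{\ol{0}})\cdot v + \C v = V_{\ol{0}}$, which is exactly the condition $\r_{\ol{0}}\cdot v = V_{\ol{0}}$ appearing as the standing hypothesis of Lemma \ref{lspher_vec}.

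Next I would argue by contrapositive. Assume $V$ is spherical for $\g$ with respect to $\b$. By Definition \ref{spher_reps} there exists some $w\in V_{\ol{0}}$ with $(\rho(\b)+\C\id_V)\cdot w = V$, i.e. $\r\cdot w = V$. Since we have already verified $\r_{\ol{0}}\cdot v = V_{\ol{0}}$, Lemma \ref{lspher_vec} applies and yields $\r\cdot v = V$. This contradicts the hypothesis $\b\cdot v \neq V$, read (as elsewhere in the paper) as $(\rho(\b)+\C\id_V)\cdot v \neq V$. Therefore $V$ cannot be spherical with respect to $\b$.

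There is no real obstacle here: the corollary is essentially a repackaging of Lemma \ref{lspher_vec} in the language of spherical representations. The one point requiring attention is bookkeeping with conventions, namely making sure that the scalar factor $\C\id_V$ is consistently included on both sides, so that the even spherical-vector condition for $\b_{\ol{0}}$ lines up verbatim with the surjectivity hypothesis $\r_{\ol{0}}\cdot v = V_{\ol{0}}$ of the lemma. Once that identification is made, the corollary follows in a single line.
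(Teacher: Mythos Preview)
Your proposal is correct and follows essentially the same approach as the paper: set $\r=\rho(\b)+\C\id_V$ and apply Lemma~\ref{lspher_vec}. The paper's proof is a single sentence to this effect; your version simply unpacks the verification of the hypotheses (in particular the identification $\r_{\ol{0}}\cdot v=V_{\ol{0}}$) more explicitly.
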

\begin{proof}
	This follows from \cref{lspher_vec} by letting $\r=(\rho(\b)+\C\id_{V})$, where $\rho:\g\to\g\l(V)$ is the representation giving the $\g$-action.
\end{proof}

\begin{lemma}\label{spher_rep_res_1}
	Let $V$ be an irreducible spherical $\g$-module of $\b$-highest weight $\lambda$, such that $\dim V_{\lambda}=(0|1)$.  We do not assume $V$ is spherical with respect to $\b$.  Then there exists an odd negative root $\alpha$ such that $\lambda+\alpha$ is a $\b_{\ol{0}}$-highest weight of $V_{\ol{0}}$.  In particular, $\lambda+\alpha$ is $\g_{\ol{0}}$-spherical.
\end{lemma}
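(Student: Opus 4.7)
The plan is to exhibit a positive odd root $\alpha_0$ such that $f_{\alpha_0} v_\lambda$ is a $\b_{\ol{0}}$-highest weight vector in $V_{\ol{0}}$; then $\alpha := -\alpha_0$ will be the required odd negative root. Here $v_\lambda$ denotes the odd highest weight vector and $f_{\alpha_0} \in \g_{-\alpha_0}$. The final ``in particular'' clause follows from \cref{ldim_res}: sphericity of $V$ forces $V_{\ol{0}}$ to be $\g_{\ol{0}}$-spherical, and by multiplicity-freeness of a spherical $\g_{\ol{0}}$-module each of its $\b_{\ol{0}}$-highest weights is the highest weight of a spherical irreducible $\g_{\ol{0}}$-module.

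First I would show that some positive odd root $\alpha$ satisfies $\g_{-\alpha} v_\lambda \neq 0$. If not, iterating gives that every monomial in negative odd root vectors kills $v_\lambda$, so by PBW one has $V = U(\n^-) v_\lambda = U(\n^-_{\ol{0}}) v_\lambda \subseteq V_{\ol{1}}$ --- contradicting $V_{\ol{0}} \neq 0$, which holds because the spherical vector of $V$ lies in $V_{\ol{0}}$. Among all such $\alpha$, choose $\alpha_0$ so that $\lambda - \alpha_0$ is maximal in the $\b$-Bruhat order, and fix $f_{\alpha_0} \in \g_{-\alpha_0}$ with $f_{\alpha_0} v_\lambda \neq 0$.

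The core step is to verify that $f_{\alpha_0} v_\lambda$ is annihilated by every positive even root vector $e_\beta$. Since $e_\beta v_\lambda = 0$, one computes
\[
e_\beta f_{\alpha_0} v_\lambda = [e_\beta, f_{\alpha_0}] v_\lambda,
\]
and the weight $\beta - \alpha_0$ of the bracket is necessarily odd. There are three cases: if $\beta - \alpha_0$ is not a root, the bracket vanishes; if it is a positive odd root, the bracket lies in $\n^+$ and therefore annihilates $v_\lambda$; and if it equals $-\gamma$ for some positive odd root $\gamma$, then non-vanishing of the left hand side would produce a non-zero element of $\g_{-\gamma} v_\lambda$ of weight $\lambda - \gamma = (\lambda - \alpha_0) + \beta$, strictly larger than $\lambda - \alpha_0$ in Bruhat order, contradicting the maximal choice of $\alpha_0$. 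The delicate point is precisely this last case, where the maximality of $\alpha_0$ is exploited to exclude $[e_\beta, f_{\alpha_0}] v_\lambda \neq 0$; once that is handled, setting $\alpha := -\alpha_0$ completes the proof.
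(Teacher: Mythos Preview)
Your argument is correct and follows essentially the same route as the paper's: pick a negative odd root vector not annihilating the highest weight vector, choose one maximal in the Bruhat order, and use the commutator identity together with maximality to show the resulting vector is $\b_{\ol{0}}$-singular. Your version is in fact slightly cleaner than the paper's, since you restrict to \emph{odd} negative roots from the outset (the paper's set $S$ is written as all of $\Delta^-$, which leaves the oddness of the maximal element implicit), and you also spell out the ``in particular'' clause via \cref{ldim_res}. One tiny remark: in your case analysis the statement ``if $\beta-\alpha_0$ is not a root, the bracket vanishes'' is not literally true when $\beta=\alpha_0$ (possible for $\q(n)$, where even and odd root spaces share weights); there the bracket lands in $\h_{\ol{1}}$, but this still annihilates $v_\lambda$ by the hypothesis $\dim V_\lambda=(0|1)$, so the conclusion is unaffected.
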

\begin{proof}
	Let $v\in V_{\lambda}$ be a non-zero highest weight vector.  Consider the set 
	\[
	S=\{\alpha\in\Delta^{-}:x_{\alpha}\cdot v\neq 0\text{ for some }x_{\alpha}\in\g_{\alpha}\}
	\]
	Then we claim $S\neq\emptyset$.  If not, $v$ will be annihilated by $\n^+_{\ol{1}}$ and $\n^-_{\ol{1}}$.  The condition that $\dim V=(0|1)$ implies further that $v$ is annihilated by $\h_{\ol{1}}$, so we find then that $v$ is annihilated by $\g_{\ol{1}}$.  Since $V=\UU\g\cdot v$, this in turn implies that $V_{\ol{0}}=0$, a contradiction.
	
	Since $S$ is not empty, we may choose $\alpha\in S$ which is maximal with respect to the Bruhat order.  Then for $z\in\n^+_{\ol{0}}$ we have
	\[
	zx_{\alpha}v=[z,x_{\alpha}]v+x_{\alpha}zv=[z,x_{\alpha}]v
	\]
	However, $[z,x_{\alpha}]$ is of weight strictly larger than $\alpha$ in the Bruhat order.  If $[z,x_{\alpha}]\in\n^{-}$, then by maximality of $\alpha$ we have $[z,x_{\alpha}]v=0$.  If $[z,x_{\alpha}]\in\h_{\ol{1}}\oplus\n_{\ol{1}}^+$, then we also have $[z,x_{\alpha}]v=0$.  So $x_{\alpha}v$ is an even $\b_{\ol{0}}$-highest weight vector.
\end{proof}

\begin{lemma}\label{spher_rep_restrictions}
  Suppose $\g$ is quasireductive with even Cartan subalgebra $\h=\h_{\ol{0}}$. Let $\lambda$ be a dominant highest weight.  If $L(\lambda)$ is numerically spherical, then the following hold:
  \begin{enumerate}
  	\item $L_{0}(\lambda)$ is a spherical $\g_{\ol{0}}$-module;
  	\item If $\g$ is basic, and $\alpha$ is a simple isotropic root such that $(\lambda,\alpha)\neq0$, then $\dim L_{0}(\lambda-\alpha)\leq \dim\b_{\ol{1}}$.
  \end{enumerate}
  \end{lemma}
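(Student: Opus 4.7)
My strategy is to exploit two basic facts: that sphericity descends to direct summands of completely reducible modules, and that odd reflections let us interchange the parities of certain highest weight components.

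For part (1), I would start by recording that, in the convention of \cref{irrep_def_max_torus} (which applies because $\h=\h_{\ol{0}}$), the highest weight vector of $L(\lambda)$ is even. Therefore the $\g_{\ol{0}}$-submodule it generates is isomorphic to $L_0(\lambda)$ and sits inside $L(\lambda)_{\ol{0}}$. Since $\g_{\ol{0}}$ is reductive and $L(\lambda)_{\ol{0}}$ is finite-dimensional and semisimple over $\g_{\ol{0}}$, this copy of $L_0(\lambda)$ is in fact a direct $\g_{\ol{0}}$-summand. Now observe that a direct summand of a spherical module is spherical: if $V=V_1\oplus V_2$ and $v=v_1+v_2$ satisfies $\b_{\ol{0}}v=V$, projecting onto $V_i$ yields $\b_{\ol{0}}v_i=V_i$. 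Since $L(\lambda)_{\ol{0}}$ is $\g_{\ol{0}}$-spherical by numerical sphericity, the summand $L_0(\lambda)$ must be spherical as well.

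For part (2), I would invoke the odd reflection formula recalled in \cref{basic_lie}: since $\alpha$ is a simple isotropic root of $\b$ and $(\lambda,\alpha)\neq 0$, one has
\[
L_{\b}(\lambda)\cong \Pi L_{\b^{\alpha}}(\lambda-\alpha).
\]
Comparing parities of the underlying super vector spaces yields
\[
L_{\b^{\alpha}}(\lambda-\alpha)_{\ol{0}}\cong L_{\b}(\lambda)_{\ol{1}}.
\]
The left-hand side contains $L_0(\lambda-\alpha)$ as a $\g_{\ol{0}}$-direct summand (same argument as in part (1), applied to the Borel $\b^{\alpha}$), so
\[
\dim L_0(\lambda-\alpha)\;\leq\;\dim L_{\b^{\alpha}}(\lambda-\alpha)_{\ol{0}}\;=\;\dim L_{\b}(\lambda)_{\ol{1}}.
\]
Finally, numerical sphericity of $L(\lambda)$ gives $\dim L_{\b}(\lambda)_{\ol{1}}\leq \max_{\b'}\dim \b'_{\ol{1}}$, and the remark following \cref{ldim_res} notes that in the basic case every Borel has the same odd dimension, so this maximum is simply $\dim\b_{\ol{1}}$. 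Chaining the inequalities gives the claim.

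There is not really a hard step here; the only subtlety is bookkeeping the parity of the highest weight vector under the odd reflection, which is precisely what the formula $L_{\b}(\lambda)\cong\Pi L_{\b^\alpha}(\lambda-\alpha)$ encodes. The two ingredients (sphericity passes to summands, and all Borels of a basic superalgebra have equal odd dimension) are both already recorded in the paper, so the proof itself should be a few lines.
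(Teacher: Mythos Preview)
Your proof is correct and follows the same approach as the paper's: for (1) you use that $L_0(\lambda)$ sits inside $L(\lambda)_{\ol{0}}$ as a $\g_{\ol{0}}$-summand, and for (2) you use odd reflections to place $L_0(\lambda-\alpha)$ inside $L(\lambda)_{\ol{1}}$. The paper's proof is a terse two-sentence version of exactly this; you have simply spelled out the details (why $L_0(\lambda)$ embeds via the even highest weight vector, why summands of spherical modules are spherical, and why $\max_{\b'}\dim\b'_{\ol{1}}=\dim\b_{\ol{1}}$ in the basic case).
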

  
  \begin{proof}
  $(1)$ follows from the observation that $L(\lambda)_{\ol{0}}$ must be a spherical $\g_{\ol{0}}$-module, and that $L_{0}(\lambda)$ is a $\g_{\ol{0}}$-submodule of $L(\lambda)_{\ol{0}}$.	 For $(2)$, by the theory of odd	 reflections we will have that $L_0(\lambda-\alpha)$ is a $\g_{\ol{0}}$-submodule of $L(\lambda)_{\ol{1}}$, and the statement follows.
    \end{proof}
  
  \begin{lemma}\label{spher_rep_restrictions_2}
 In the context of \cref{spher_rep_restrictions}, if $\Pi L(\lambda)$ is numerically spherical, then the following hold:
  \begin{enumerate}
  	\item $\dim L_{0}(\lambda)\leq\max\limits_{\b}\dim\b_{\ol{1}}$; 
  	\item If $\g$ is basic, and $\alpha$ is a positive simple isotropic root such that $(\lambda,\alpha)\neq0$, then $L_{0}(\lambda-\alpha)$ is a spherical $\g_{\ol{0}}$-module.
  	\item If $\g$ is basic and $\lambda$ is not a character, there exists a positive odd isotropic root $\alpha$ such that $(\lambda,\alpha)\neq0$ and $L_{0}(\lambda-\alpha)$ is a spherical $\g_{\ol{0}}$-module.
  \end{enumerate}
\end{lemma}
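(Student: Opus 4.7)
The plan is to first unpack what numerical sphericity of $\Pi L(\lambda)$ means: using $(\Pi L(\lambda))_{\ol{0}} = L(\lambda)_{\ol{1}}$ and $(\Pi L(\lambda))_{\ol{1}} = L(\lambda)_{\ol{0}}$, the hypothesis becomes the two statements that $L(\lambda)_{\ol{1}}$ is a spherical $\g_{\ol{0}}$-module and $\dim L(\lambda)_{\ol{0}} \leq \max_{\b}\dim\b_{\ol{1}}$. Part (1) then follows at once, since our convention places the highest weight vector of $L(\lambda)$ in the even part, so $L_{0}(\lambda)$ embeds into $L(\lambda)_{\ol{0}}$ as a $\g_{\ol{0}}$-submodule, and the dimension bound passes down.

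For part (2), I invoke the odd reflection formula recalled in \cref{basic_lie}: since $\alpha$ is positive simple isotropic with $(\lambda,\alpha) \neq 0$, we have $L_{\b}(\lambda) \cong \Pi L_{\b^{\alpha}}(\lambda-\alpha)$, so the $\b^{\alpha}$-highest weight vector of this module sits inside the original as an odd vector of weight $\lambda-\alpha$, and the $\g_{\ol{0}}$-submodule it generates is a copy of $L_{0}(\lambda-\alpha)$ living inside $L(\lambda)_{\ol{1}}$. Because $L(\lambda)_{\ol{1}}$ is $\g_{\ol{0}}$-spherical and any $\g_{\ol{0}}$-submodule of a spherical module over a reductive group is again spherical (multiplicity-freeness of the coordinate ring descends to any quotient, and hence to submodules of the underlying representation), we conclude $L_{0}(\lambda-\alpha)$ is spherical.

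For part (3), I first need $L(\lambda)_{\ol{1}} \neq 0$. If it vanished, then $\g_{\ol{1}} v_{\lambda} = 0$; since every basic simple $\g$ on our list satisfies $[\g_{\ol{1}},\g_{\ol{1}}] \supseteq [\g_{\ol{0}},\g_{\ol{0}}]$, this would force $\n^{-}_{\ol{0}} v_{\lambda} = 0$ as well, making $L(\lambda) = \C v_{\lambda}$ and contradicting that $\lambda$ is not a character. The next step is to produce a positive odd isotropic root $\alpha$ with $(\lambda,\alpha) \neq 0$ such that $L_{0}(\lambda-\alpha)$ arises as a $\g_{\ol{0}}$-submodule of $L(\lambda)_{\ol{1}}$; once found, that submodule is automatically spherical by the ambient sphericity, completing the proof.

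The main obstacle is producing such an $\alpha$. The approach is to mimic \cref{spher_rep_res_1}: form $S = \{\beta \in \Delta^{+}_{\ol{1}} : x_{-\beta} v_{\lambda} \neq 0\}$, which is non-empty by the above, and pick $\beta \in S$ minimal in Bruhat order. A parity-matched calculation like that in \cref{spher_rep_res_1}---using that any $z \in \n^{+}_{\ol{0}}$ is even, so $z \cdot x_{-\beta} v_{\lambda} = [z,x_{-\beta}] v_{\lambda}$, which either kills $v_{\lambda}$ because $[z,x_{-\beta}] \in \n^{+}$ or involves a smaller odd positive root ruled out by minimality---shows that $x_{-\beta} v_{\lambda}$ is a $\b_{\ol{0}}$-highest weight vector, hence $L_{0}(\lambda-\beta) \hookrightarrow L(\lambda)_{\ol{1}}$. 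When $\beta$ is a simple isotropic root, the odd reflection characterisation $x_{-\beta} v_{\lambda} \neq 0 \iff (\lambda,\beta) \neq 0$ closes the argument. The remaining cases---non-isotropic simple odd $\beta$ (only relevant for $\o\s\p(2m+1|2n)$ and certain Borels of the exceptional basic algebras) and non-simple $\beta$---require extra work: either pass to a distinguished Borel in which every simple odd root is isotropic and re-apply the argument, or directly exhibit an isotropic positive odd $\alpha$ (for instance of the form $\beta + \gamma$ for a well-chosen even simple $\gamma$) with the required nonvanishing. I expect this root-system bookkeeping to be the main work in the proof.
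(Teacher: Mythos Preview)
Your treatment of parts (1) and (2) is correct and matches the paper's argument essentially verbatim: $L_0(\lambda)\subseteq L(\lambda)_{\ol 0}=(\Pi L(\lambda))_{\ol 1}$ gives the dimension bound, and for (2) the odd reflection $r_\alpha$ flips parity, placing $L_0(\lambda-\alpha)$ inside $L(\lambda)_{\ol 1}=(\Pi L(\lambda))_{\ol 0}$, which is $\g_{\ol 0}$-spherical.

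For part (3) you take a genuinely different route from the paper, and yours is the harder one. You imitate \cref{spher_rep_res_1}: pick $\beta\in\Delta^+_{\ol 1}$ minimal with $x_{-\beta}v_\lambda\neq 0$ and show $x_{-\beta}v_\lambda$ is $\b_{\ol 0}$-highest. That part is fine. The difficulty, as you yourself flag, is that nothing in this argument forces $\beta$ to be isotropic or to satisfy $(\lambda,\beta)\neq 0$; a minimal $\beta$ need not even be simple for $\b$, and in $\o\s\p(2m+1|2n)$ or $G(1,2)$ non-isotropic odd roots are present. Your proposed fixes (change to a Borel with only isotropic odd simple roots, or replace $\beta$ by $\beta+\gamma$) are plausible but constitute a genuine case analysis you have not carried out; in particular, for $\o\s\p(2m+1|2n)$ and $G(1,2)$ there is no Borel all of whose odd simple roots are isotropic, so the first fix cannot work as stated.

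The paper bypasses all of this with a single observation: since $\lambda$ is not a character, there is some sequence of odd reflections $r_{\alpha_1},\dots,r_{\alpha_s}$ with $(\lambda,\alpha_1)=\cdots=(\lambda,\alpha_{s-1})=0$ and $(\lambda,\alpha_s)\neq 0$. The first $s-1$ reflections leave the highest weight and its parity unchanged; the $s$th flips parity, so $L_0(\lambda-\alpha_s)$ lands in $L(\lambda)_{\ol 1}$ and is therefore $\g_{\ol 0}$-spherical. Because odd reflections are \emph{only} performed at isotropic simple roots, $\alpha:=\alpha_s$ is automatically isotropic and automatically satisfies $(\lambda,\alpha)\neq 0$. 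This delivers exactly the $\alpha$ required by the statement with no root-system bookkeeping, which is why the paper's proof of (3) is two sentences rather than the extended case analysis you anticipate.
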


\begin{proof}

(1) follows from by definition of numerically spherical modules.

For $(2)$, by the theory of odd reflections $L_0(\lambda-\alpha)$ will be a $\g_{\ol{0}}$-submodule of $L(\lambda)_{\ol{0}}$, and the statement follows.

For (3), since $\lambda$ is not character, if we consider all possible sequences of odd reflections we can apply to our Borel, there will be some sequence  of odd reflections $r_{\alpha_s},\cdots,r_{\alpha_1}$ giving rise to a new Borel such that $(\lambda,\alpha_1)=\cdots=(\lambda,\alpha_{s-1})=0$ and $(\lambda,\alpha_s)\neq 0$.  

Let $\alpha=\alpha_s$.  By the theory of odd reflections, $\lambda-\alpha$ will be an even highest weight vector with respect this new Borel of $\g$.  Therefore $L_{0}(\lambda-\alpha)$ will be a spherical $\g_{\ol{0}}$-module.
\end{proof}

\begin{remark}\label{char_rem}
Characters of a Lie superalgebra, including the trivial one, are all spherical and equivalent to $(\C^{1|0},\mathbf{0})$ where $\mathbf{0}$ denotes the trivial Lie algebra.  Further, if $\chi$ is a character of $\g$ and $(V,\g,\rho)$ is a spherical representation, then $(V,\g,\rho\otimes \chi)$ is also spherical and is equivalent to $(V,\g,\rho)$.  It follows we may work with equivalence classes of spherical representations up to twists by characters.

We also observe that all one-dimensional odd modules are numerically spherical.  
\end{remark}

\section{Explanation of Procedure for Proof}\label{proof_procedure}

To classify spherical indecomposable representations, we will work case by case with various Lie superalgebras.

For a chosen algebra $\g$, we will first list any needed notation and setup. Then we will determine all (numerically) spherical irreducible representations.  If $\g$ has that $\h=\h_{\ol{0}}$ (i.e. for all cases but $\g=\q(n)$), we proceed according to the following steps:

\begin{enumerate}
	\item We choose a fixed, `standard' Borel for $\g$, which we write as $\b^{st}$. 
	\item We state all $\g_{\ol{0}}$-dominant weights which are spherical with respect to $\b^{st}_{\ol{0}}$. We will simply quote the results found by Kac in \cite{kac1980some}.
	\item From (1) and (2), we write a list of candidate $\b^{st}$-dominant weights $\lambda$ for which $L(\lambda)$ could be numerically spherical.  By \cref{char_rem}, we may find candidate weights up to twists by characters of $\g$. 
	
	Such $\lambda$ must have the properties that they are dominant with respect to $\b^{st}$ and spherical with respect to $\g_{\ol{0}}$.  Further, by \cref{spher_rep_restrictions}, if $\g$ is basic and $\alpha$ is a simple positive isotropic root such that $(\lambda,\alpha)\neq0$, then we must have $\dim L_0(\lambda-\alpha)\leq \dim\b_{\ol{1}}$.  This will be used heavily.  
	\item Determine whether $L(\lambda)=L_{\b^{st}}(\lambda)$ is (numerically) spherical for each $\lambda$ from (3).
	\item Create a candidate list of $\b^{st}$-dominant weights $\lambda$ for which $\Pi L(\lambda)$ could be numerically spherical.  Again, we can work up to twists by characters.  By \cref{spher_rep_restrictions_2}, such a $\lambda$ must have that 
	\[
	\dim L_{0}(\lambda)\leq\max\limits_{\b}\dim\b_{\ol{1}}.
	\]
	
	If $\g$ is basic, then \cref{spher_rep_restrictions_2} says that $\lambda-\alpha$ falls into the list in (2) for some positive (with respect to $\b^{st}$) isotropic root $\alpha$ with $(\lambda,\alpha)\neq0$.  Further, if $\alpha$ is a simple positive isotropic root such that $(\lambda,\alpha)\neq0$, then $\lambda-\alpha$ falls into the list in (2). This will be used heavily.
	
	If $\g=\p(n)$, then by \cref{spher_rep_res_1} there must exist an odd negative root $\alpha$ for which $\lambda+\alpha$ is spherical for $\g_{\ol{0}}$.
	\item Determine whether $\Pi L(\lambda)$ is (numerically) spherical for each $\lambda$ from (5).
\end{enumerate}

If $\g=\q(n)$, then we proceed as above, except we make a single list of candidate weights for which $L(\lambda)$ or $\Pi L(\lambda)$ could be (numerically) spherical, and then make a check.  This is because unless $\lambda=0$, we will have $L(\lambda)_{\lambda}=(k|k)$ where $k>0$, so some conditions in both (3) and (5) will apply to $\lambda$.

\

The above steps will give all (numerically) spherical irreducible representations.  By \cref{ldim_res} and \cref{quot}, an indecomposable spherical representation must have numerically spherical composition factors and a spherical head.  To determine all spherical indecomposables, we will take the modules from our above list and compute extensions between them, and check if any extensions are spherical.

\section{Spherical $\g\l(m|n)$ Modules}\label{gl_case}

Set $\g=\g\l(m|n)$.  Using $\g\l(m|n)\cong\g\l(n|m)$, in finding all spherical representations we may assume without loss of generality that $m\leq n$.  For more on this algebra, see \cite{musson2012lie} or \cite{cheng2012dualities}.  We refer the reader to appendix \hyperref[App_borels]{A} for an explanation of our notation for the root system and Borels of $\g$.  

\textbf{Notation:} Write
\[
{\det}_{\epsilon}:=\epsilon_1+\dots+\epsilon_m,\ \ \ {\det}_{\delta}:=\delta_1+\dots+\delta_{n}
\]
For a $\g_{\ol{0}}$-dominant weight $\lambda$, we will write $K_{m|n}(\lambda)$ for the Kac module
\[
K_{m|n}(\lambda)=\UU\g\otimes_{\UU(\g_{0}\oplus\g_{1})}L_{0}(\lambda).
\]
Here we use the usual $\Z$-grading on $\g$, $\g=\g_{-1}\oplus\g_0\oplus\g_{1}$.

We write $GL_{m|n}$ for the standard $\g\l(m|n)$-module structure on $\C^{m|n}$.  

\

\textbf{Choice of standard Borel:} $\b^{st}=\b^{\epsilon^m\delta^n}$, i.e. the Borel of upper triangular matrices of $\g\l(m|n)$.  Observe that for every Borel $\b$ of $\g$, we have $\dim\b_{\ol{1}}=mn$.  

%
%
\

\textbf{Characters of $\g$}: The characters of $\g$ are exactly the multiples of the Berezinian weight $\Ber$, defined by
\[
\Ber=\epsilon_1+\dots+\epsilon_m-\delta_1-\dots-\delta_n={\det}_{\epsilon}-{\det}_{\delta}
\] 

\

\textbf{Spherical weights for $\g_{\ol{0}}=\g\l(m)\times\g\l(n)$}:
\[
0, \ \ \epsilon_1,\ \ -\epsilon_m,\ \ \delta_1, \ \ -\delta_n
\]
\begin{align}\label{gl_even}
2\epsilon_1,\ \ -2\epsilon_m,\ \ 2\delta_1, \ \ -2\delta_n
\end{align}
\[
\epsilon_1+\epsilon_2,\ \ -\epsilon_{m-1}-\epsilon_m, \ \ \delta_1+\delta_2, \ \ -\delta_{n-1}-\delta_n
\]
\[
\epsilon_1+\delta_1,\ \ \epsilon_1-\delta_n,\ \ -\epsilon_m+\delta_1, \ \ -\epsilon_m-\delta_n 
\]
We also may add to any of the above weights an arbitrary linear combination of $\det_{\epsilon}$ and $\det_{\delta}$, and get another spherical weight.

\

\subsection{The Case of $\g\l(1|1)$}

We deal with $\g=\g\l(1|1)$ separately.  We have
\[
\g_{\ol{1}}=\C\langle u_{+},u_{-}\rangle, \text{ where }u_{\pm}\in\g_{\pm(\epsilon_1-\delta_1)}
\] 
There are exactly two Borel subalgebras, and they are non-conjugate:
\[
\b^{st}:=\b^{\epsilon\delta}=\g_{\ol{0}}\oplus\C\langle u_{+}\rangle,\ \ \b^{\delta\epsilon}=\g_{\ol{0}}\oplus\C\langle u_{-}\rangle
\]
Every weight $s\epsilon_1+t\delta_1$ is dominant.  These are all $\g_{\ol{0}}$-spherical, and all differ with $t\epsilon_1$ by some multiple of the Berezinian, so without loss of generality we can restrict our attention to weights of the form $t\epsilon_1$.
\begin{prop}
	For $\g\l(1|1)$, all non-trivial indecomposable spherical modules (up to equivalence) fall into the following list:
	\begin{enumerate}
		\item The standard module $GL_{1|1}$, which is spherical with respect to $\b^{\delta\epsilon}$ and has stabilizer $(\b^{\epsilon\delta})_{(\epsilon_1)}$ 
		\item The $(1|1)$-dimensional module $K_{1|1}(0)$, which is spherical with respect to $\b^{\delta\epsilon}$, and has stabilizer $\b^{\epsilon\delta}$.  This module is equivalent to the $\p(1)$-module $P_{1|1}$ (see \cref{p_case}).
	\end{enumerate}
\end{prop}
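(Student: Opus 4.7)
The plan is to reduce dominant weights modulo Berezinian twists, identify the candidate $(1|1)$-dimensional modules case by case, and then use dimension bounds from \cref{ldim_res} to rule out larger indecomposables.

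By \cref{char_rem} any dominant weight $s\epsilon_1+t\delta_1$ reduces to the form $t\epsilon_1$ after twisting with a multiple of $\Ber=\epsilon_1-\delta_1$. For $t\neq0$ the Kac module $K_{1|1}(t\epsilon_1)=L(t\epsilon_1)$ is typical of superdimension $(1|1)$, and a direct matrix computation shows that the image $\rho(\g)+\C\id$ equals all of $\g\l(V)$ since both $\rho(u_+)$ and $\rho(u_-)$ are nonzero. Hence all these modules form a single equivalence class represented by $GL_{1|1}=L(\epsilon_1)$. Sphericity with respect to $\b^{\delta\epsilon}$ follows by applying $u_-$ to the even highest weight vector $v$, which generates $V_{\ol{1}}$; non-sphericity with respect to $\b^{\epsilon\delta}$ follows from \cref{cspher_vec} since $\b^{\epsilon\delta}\cdot v=\C v$. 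The stabilizer of $v$ is read off directly as $(\b^{\epsilon\delta})_{(\epsilon_1)}$.

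When $t=0$ the irreducible $L(0)=\C$ is trivial, so we instead consider the non-split Kac module $K_{1|1}(0)$. Here $\rho(u_+)=0$ while $\rho(u_-)\neq 0$, so $\rho(\g)+\C\id$ is a proper $4$-dimensional subalgebra of $\g\l(V)$. The even generator $v$ is annihilated by $\g_{\ol{0}}$ and $u_+$, while $u_-v$ spans $V_{\ol{1}}$, giving $(\b^{\delta\epsilon}+\C\id)\cdot v=V$ with stabilizer $\b^{\epsilon\delta}$. Matching generators of $\rho(\g\l(1|1))+\C\id$ with those of the standard action of $\p(1)$ on $\C^{1|1}$ yields the equivalence with $P_{1|1}$, as will be developed in \cref{p_case}.

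Finally, to exclude further indecomposables, note every Borel satisfies $\dim\b_{\ol{1}}=1$, so \cref{ldim_res} forces $\dim V_{\ol{1}}\leq 1$, and since we require $V_{\ol{1}}\neq 0$ we have $\dim V_{\ol{1}}=1$. Typical blocks are semisimple and contribute only the irreducibles already found. In the unique atypical block, any indecomposable of superdimension $(k|1)$ with $k\geq 2$ has every even composition factor trivial as a $\g_{\ol{0}}$-module; hence $\g_{\ol{0}}$ acts trivially on $V_{\ol{0}}$ and $(\b_{\ol{0}}+\C\id)\cdot v\subseteq\C v$ for every $v\in V_{\ol{0}}$, contradicting sphericity of $V_{\ol{0}}$ for $\g_{\ol{0}}$. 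The main delicacy is the collapse of all typical equivalence classes into a single one, which rests on the identity $\rho(\g)+\C\id=\g\l(V)$ holding uniformly for $t\neq 0$.
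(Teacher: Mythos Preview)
The paper omits the proof, so there is nothing to compare your approach against; I can only assess correctness. Your handling of the two listed modules is essentially right, modulo a small slip: for $K_{1|1}(0)$ the image $\rho(\g)+\C\id_V$ is three-dimensional, not four, since both $I$ and $u_+$ lie in $\ker\rho$.

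The genuine gap is in your exclusion of larger indecomposables. You assert that in the atypical block every even composition factor of a $(k|1)$-dimensional indecomposable is trivial over $\g_{\ol{0}}$, and conclude that $\g_{\ol{0}}$ annihilates $V_{\ol{0}}$. Both claims are false: the even simples in an atypical block are the one-dimensional characters $\C_{s\Ber}$, on which $h=E_{11}-E_{22}$ acts by the scalar $2s$, and in a zigzag indecomposable the values of $s$ occurring are pairwise distinct. In particular, for $k=2$ the even part \emph{is} $\g_{\ol{0}}$-spherical. Concretely, take the indecomposable $V$ with socle $\Pi\C$ and head $\C_{-\Ber}\oplus\C_{\Ber}$ (which exists since $\operatorname{Ext}^1(\C_{\pm\Ber},\Pi\C)\neq 0$). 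Writing $v_{\pm}$ for preimages of the head generators and $w$ for a socle generator, one has $u_+v_-\in\C^\times w$ and $u_-v_+\in\C^\times w$, while $u_\pm w=0$; for $v=v_-+v_+$ one then checks directly that $(\b^{\epsilon\delta}+\C\id)\cdot v=\C\langle v,\,hv,\,u_+v\rangle=V$. Thus this $(2|1)$-dimensional module is spherical for $\g\l(1|1)$, and it is not equivalent to either listed module, since $\rho(\g)+\C\id$ here has superdimension $(2|2)$ with $[\rho(\g)_{\ol{1}},\rho(\g)_{\ol{1}}]=0$. Your bound $\dim V_{\ol{1}}=1$ is correct, and the bipartite Ext-quiver of the atypical block then shows that no $(k|1)$-indecomposable with $k\geq 3$ exists; but the case $k=2$ requires a separate argument, and in fact appears to yield a spherical example not on the list, so the proposition as stated may itself be incomplete.
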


\begin{proof} The proof is straightforward and hence omitted.\end{proof}

\

\subsection{The Case of $\g\l(1|2)$}\

\

\textbf{Candidate even weights:}
\[
t\epsilon_1\ (t\neq 0),\ \ \ \ -\epsilon_1+\delta_1,\ \ \ \ -2\epsilon_1+2\delta_1
\]

\

\textbf{Check for (numerical) sphericity of $L(\lambda)$:}
\begin{itemize}
	\item  First suppose that $\lambda=t\epsilon_1$, with $t\neq 0$.  In this case, $L(\lambda)$ is a quotient of $K_{1|2}(\lambda)$, which is $(2|2)$-dimensional.  It's a straightforward check that $K_{1|2}(\lambda)$ is always spherical, and so $L(\lambda)$ is also.
	
	\item If $\lambda=-\epsilon_1+\delta_1$, then $\lambda$ differs with the Berezinian by $-\delta_2$, which gives $\Pi GL_{1|2}^*$ which is spherical (this will be shown later).
	
	\item  Finally suppose $\lambda=-2\epsilon_1+2\delta_1$. Then $\lambda$ differs by a multiple of the Berezinian with $-2\delta_2$, which is the highest weight of $\Lambda^2GL_{1|2}^*$.  We will see below that the second exterior power of the standard module for $\g\l$ is always spherical, so this is too.
\end{itemize}

\

\textbf{Candidate odd weights:}
\[
t\epsilon_1 \ (t\neq 0), \ \ \ \ \lambda=t\epsilon_1+\delta_1
\]

\
  
\textbf{Check for (numerical) sphericity of $\Pi L(\lambda)$:}
\begin{itemize}
	\item[$\bullet$]Suppose $\lambda=t\epsilon$ with $t\neq0$.  Then if $t=1$, we get the parity shift of the standard module, which is spherical (shown below).  If $t\neq 1$ then $\lambda$ is a typical weight, so that $\Pi L(\lambda)\cong \Pi K_{1|2}(\lambda)$.  One can check that this module is spherical exactly with respect to the Borel $\delta\epsilon\delta$. 
	
	\item[$\bullet$] If $\lambda=t\epsilon_1+\delta_1$, then if $t=-1$, $\Pi L(\lambda)$ is equivalent up to the Berezinian with $GL_{1|2}^*$, which is spherical.  If $t=1$, then $\lambda=\epsilon_1+\delta_1$ which gives $\Lambda^2GL_{1|2}$, and as already mentioned this is spherical.  If $t\neq\pm1$, then the sequence of odd reflections $r_{\epsilon_1-\delta_1}$ followed by $r_{\epsilon_1-\delta_2}$ both change the weight, which forces the odd dimension of the module to be larger than 2, so it cannot be numerically spherical.  
\end{itemize}

\

\textbf{(Numerically) Spherical irreducibles for $\g\l(1|2)$:} Along with $(\Pi)\C$ we have the following, up to equivalence:
\[
(\Pi)GL_{1|2}, \ \ \ (\Pi)K_{1|2}(t\epsilon_1)\ (t\neq 0,1)\ \ \ \Lambda^2GL_{1|2}
\]

\

\textbf{Indecomposables} For dimension reasons, the only possible extensions which are spherical are of an even one-dimensional module with a spherical irreducible or of $\Pi GL_{1|2}$ with a module equivalent to it or an odd one-dimensional module.  

The trivial module for $\g\l(1|2)$ admits non-trivial extensions only with $(GL_{1|2})_{\Ber}$ and $(GL_{1|2})^*_{-\Ber}$, and these extensions are exactly $K_{1|2}(0)$, $K_{1|2}(0)^*\cong (K_{1|2}(\epsilon_1))_{\Ber}$ along with two modules that are geometrically equivalent to these.  It has already been noted that $K_{1|2}(0)$ and $K_{1|2}(\epsilon_1)$ are spherical, so we get two new indecomposable spherical modules in this way.  The module $K_{1|2}(0)$ is spherical exactly with respect to $\delta\delta\epsilon$ and $\epsilon\delta\delta$, while $K_{1|2}(\epsilon_1)$ is spherical exactly with respect to only $\delta\delta\epsilon$.  No further extensions can be constructed which remain spherical.

The module $\Pi GL_{1|2}$ has extensions exactly with $\Pi \C_{-\Ber}$ and $\Lambda^2GL_{1|2}$.  The extensions by the latter is not numerically spherical, and hence not spherical.  But the extension with $\Pi \C_{-\Ber}$ with $\Pi GL_{1|2}$ as the quotient is exactly $\Pi K_{1|2}(\epsilon_1)$, which is spherical exactly with respect to the Borel $\delta\epsilon\delta$.  Note the opposite extension of the two has an odd one-dimensional quotient, so cannot be spherical.  No further extensions can be constructed which remain spherical.

Our work shows we get the following list of spherical indecomposables for $\g\l(1|2)$ which do not come from the standard module, along with stabilizers of spherical vectors and Borels for which sphericity is achieved.  All modules arising from the standard module will be dealt with in the next subsection.
\renewcommand{\arraystretch}{1.4}
\[	\begin{tabular}{ |c|c|c|c| } 
\hline 
Rep & $\dim^s$ & Borels & Stabilizer\\
\hline 
$K_{1|2}(t\epsilon_1)$, $t\neq 0,1$ & $(2|2)$ &$\epsilon\delta\delta$, $\delta\delta\epsilon$ &$\o\s\p(1|2)$  \\ 
\hline
$K_{1|2}(0)$ & $(2|2)$ &$\epsilon\delta\delta$, $\delta\delta\epsilon$ &$\C\times \o\s\p(1|2)$  \\ 
\hline
$K_{1|2}(\epsilon_1)$ & $(2|2)$ & $\delta\delta\epsilon$ & $\s\p(2)\ltimes \Pi Sp_2$ \\
\hline
$\Pi K_{1|2}(t\epsilon_1)$, $t\neq 0$ & $(2|2)$ & $\delta\epsilon\delta$ & $(\b^{-\delta\epsilon\delta})_{((1-t)\epsilon-\delta_2)}$ \\
\hline
\end{tabular} \label{gl(1,2)_table}
\]
\renewcommand{\arraystretch}{1}

\subsection{Some Spherical Irreducibles for $\g\l(m|n)$}

From our above work we may now assume either $m\geq 2$ or $n\geq 3$.

We present the (numerically) spherical modules for $\g\l(m|n)$ that arise naturally from the standard module, and prove they are spherical. We write $v_1,\dots,v_m,w_1,\dots,w_n$ for a homogeneous basis for $GL_{m|n}$.  Here $v_i$ is even with weight $\epsilon_i$ and $w_j$ is odd with weight $\delta_j$.

\begin{prop}\label{gl_std_reps}
	Suppose $1\leq m\leq n$ and either $m\geq 2$ or $n\geq 3$.  Then of all the modules $(\Pi)S^dGL_{m|n}$ and $(\Pi)\Lambda^dGL_{m|n}$, where $d\geq 1$, the numerically spherical ones are exactly those in the following list, and they are all spherical:
	\[
(\Pi)GL_{m|n}, \ \ \ S^2GL_{m|n}, \ \ \ \Lambda^2GL_{m|n}, \ \ \ \Pi S^2GL_{n|n}\cong(\Pi\Lambda^2GL_{n|n})^{\Pi}, \ \ \ \Pi S^2GL_{n|n+1}
	\]  
	Further, these modules are equivalent to one showing up in the following table, where we allow for $m,n$ to be arbitrary.  The table also lists the conjugacy classes of Borels each is spherical with respect to along with the stabilizer of a spherical vector $v$.
\end{prop}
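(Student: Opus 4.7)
The plan is to reduce first to numerical sphericity via \cref{ldim_res}, and then to exhibit explicit spherical vectors for each surviving candidate. The main structural tool is the super-symmetric and exterior-power decomposition
\[
S^d GL_{m|n}\cong\bigoplus_{k=0}^d\Pi^k\bigl(S^{d-k}GL_m\otimes\Lambda^kGL_n\bigr),\qquad \Lambda^d GL_{m|n}\cong\bigoplus_{k=0}^d\Pi^k\bigl(\Lambda^{d-k}GL_m\otimes S^kGL_n\bigr),
\]
which makes the $\g_{\ol 0}$-module structure transparent. Since every Borel of $\g\l(m|n)$ has odd dimension exactly $mn$, numerical sphericity then reduces to two explicit conditions: every $\g_{\ol 0}$-highest weight appearing in $V_{\ol 0}$ lies in the list \eqref{gl_even}, and $\dim V_{\ol 1}\leq mn$.

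For $d\geq 3$ the even part of $S^d GL_{m|n}$ contains $S^d GL_m$ and the even part of $\Lambda^d GL_{m|n}$ contains $\Lambda^d GL_m$. As soon as $m\geq 2$ neither of these has a $\g\l(m)$-spherical highest weight, killing numerical sphericity outright. The remaining subcase $m=1$, $n\geq 3$, $d\geq 3$ is dispatched by the odd-dimension bound, which forces $\sum_{k\text{ odd}}\binom{n}{k}\leq n$, an inequality that fails once $n\geq 3$ and $d\geq 3$. Their parity-shifted siblings $\Pi S^d GL_{m|n}$ and $\Pi\Lambda^d GL_{m|n}$ are ruled out by the same calculation with the roles of the even and odd parts reversed.

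This leaves only $d=1$ and $d=2$. For $d=1$ both $(\Pi)GL_{m|n}$ clearly pass the numerical test. For $d=2$, the odd parts of $S^2 GL_{m|n}$ and $\Lambda^2 GL_{m|n}$ each have dimension exactly $mn$, so both are numerically spherical for all $(m,n)$. The parity-shifted twins require more care: the inequality $\dim V_{\ol 1}\leq mn$ for $\Pi S^2 GL_{m|n}$ becomes $\binom{m+1}{2}+\binom{n}{2}\leq mn$, which simplifies to $(m-n)(m-n+1)\leq 0$ and hence to $m=n$ or $m=n-1$; a parallel computation pins $\Pi\Lambda^2 GL_{m|n}$ down to $m=n$, and this case turns out to be equivalent to $\Pi S^2 GL_{n|n}$ under the outer automorphism of $\g\l(n|n)$ discussed below.

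It remains, for each surviving module, to produce a Borel $\b$ and an even vector $v$ with $(\rho(\b)+\C\id_V)v=V$, and to read off the stabilizer populating the table. For $(\Pi)GL_{m|n}$ one chooses a staircase Borel interleaving the $\epsilon$'s and $\delta$'s so that its odd simple roots reach between even and odd coordinates, and takes $v=\sum_i v_i$; the constructions for $S^2 GL_{m|n}$ and $\Lambda^2 GL_{m|n}$ lift this one level up by using combinations built from $v_iv_j$. The hard part will be the twisted modules $\Pi S^2 GL_{n|n}$ and $\Pi S^2 GL_{n|n+1}$: here the even part of the module is $GL_m\otimes GL_n$ rather than a single factor, so one must select a Borel with many odd simple roots whose $\b_{\ol 0}$-orbit already meets $GL_m\otimes GL_n$ in an open set, and whose $\b_{\ol 1}$ then surjects onto the odd complement $S^2 GL_m\oplus\Lambda^2 GL_n$. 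The claimed equivalence $\Pi S^2 GL_{n|n}\cong(\Pi\Lambda^2 GL_{n|n})^{\Pi}$ finally drops out by applying the outer automorphism of $\g\l(n|n)$ swapping the even and odd coordinate labels and invoking \cref{lgeoequiv}.
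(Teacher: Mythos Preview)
Your outline has the right shape but contains a concrete error in the elimination step for $\Lambda^d GL_{m|n}$, $d\geq 3$. You assert that $\Lambda^d GL_m$ is not $\g\l(m)$-spherical once $m\geq 2$, but this is false: for $d\in\{m-2,m-1,m\}$ the highest weight $\omega_d$ differs from a weight in \eqref{gl_even} by a multiple of $\det_\epsilon$ (for instance $\Lambda^3 GL_5$ has highest weight $\det_\epsilon-\epsilon_4-\epsilon_5$, which is spherical), and for $d>m$ the module $\Lambda^d GL_m$ vanishes and the argument is vacuous. So for small $m$ you have no elimination on the exterior side, and the handwave ``the same calculation with the roles of the even and odd parts reversed'' for $\Pi\Lambda^d$ and $\Pi S^d$ inherits the same problem.

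The paper avoids this entirely by first invoking $GL_{m|n}\cong\Pi GL_{n|m}$, hence $\Lambda^d GL_{m|n}\cong\Pi^d S^d GL_{n|m}$, and then treating only $(\Pi)S^d GL_{m|n}$ with no constraint $m\leq n$. Since $S^d GL_{\max(m,n)}$ is genuinely non-spherical for $d\geq 3$ and $\max(m,n)\geq 2$, the symmetric-power argument goes through uniformly. For $\Pi S^d$ the paper looks at the summand $S^{d-1}V_{\ol 0}\otimes V_{\ol 1}$ of the even part, whose highest weight $(d-1)\epsilon_1+\delta_1$ forces $m=1$ or $n=1$, after which the odd dimension kills it. Finally, the proposition also asserts the exact Borel classes and stabilizers in the table; your sketch (``staircase Borel'', ``combinations built from $v_iv_j$'') does not produce these. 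The paper writes down explicit spherical vectors, e.g.\ $v=v_1^2+\cdots+v_m^2+w_1w_2+\cdots+w_{n-1}w_n$ for $S^2GL_{m|n}$ and $v=v_1w_1+\cdots+v_nw_n$ for $\Pi S^2GL_{n|n}$, and determines the admissible $\epsilon\delta$-strings by a direct linear-algebra computation of $\b_{\ol 1}\cdot v$.
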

\renewcommand{\arraystretch}{1.5}
	\[\hspace{-1em}	\begin{tabular}{ |c|c|c|c| } 
	\hline 
	Rep & $\dim^s$ & Borels & Stabilizer\\
	\hline
	$GL_{m|n}$ & $(m|n)$ & $\cdots\epsilon$ & $\g\l(m-1|n)\ltimes(\C^{m-1|n})^*$ \\
	\hline
	$S^2GL_{m|n}$ & $(\frac{n(n-1)}{2}+\frac{m(m+1)}{2}|mn)$& $\delta^{i_1}\epsilon^{j_1}\delta^{2i_2}\epsilon^{j_2}\delta^{2i_3}\cdots\epsilon^{j_k}\delta^{2i_l}$& $\o\s\p(GL_{m|n}^*,v)$ \\ 
	\hline
	\makecell{$(\Pi \Lambda^2GL_{n|n})^{\Pi}\cong$\\ $\Pi S^2GL_{n|n}$}  & $(n^2|n^2)$& $\epsilon\delta\epsilon\delta\cdots\epsilon\delta$ & $\p(n)^\Pi$\\
	\hline
	$\Pi S^2GL_{n|n+1}$ & $(n(n+1)|n(n+1))$& $\delta\epsilon\delta\epsilon\cdots\epsilon\delta$ & $(\p(n)^\Pi\times\C)\ltimes (\Pi P_{n|n})_{-\delta_1}$ \\
	\hline
	\end{tabular}
	\]
	\renewcommand{\arraystretch}{1}
	By $\o\s\p(GL_{m|n}^*,v)$ we mean the superalgebra of matrices preserving the form on $GL_{m|n}^*$ induced by the spherical vector $v$.  By $\p(n)^\Pi$, we mean the algebra gotten by applying the parity shift automorphism to $\p(n)$.  In the last stabilizer, the extra copy of $\C$ is acting by the character $-1$ (this extra copy of $\C$ is spanned by the diagonal element dual to the weight $\delta_{1}$, hence the notation).

\begin{proof} Observe that $GL_{m|n}\cong\Pi GL_{n|m}$, and so $\Lambda^dGL_{m|n}\cong\Pi^dS^dGL_{n|m}$ for all $d$.  It therefore suffices to study $GL_{m|n}$ and $(\Pi)S^dGL_{m|n}$ for $d\geq 2$, where we now only require that $m,n\geq 1$ and that either $m=n=2$ or $\max(m,n)\geq 3$.  We have the following cases; let $V=GL_{m|n}$.
	
\begin{enumerate}
			
\item $V$: We observe that $V$ is spherical if we take $v_m$ for our spherical vector, exactly with respect to Borels for which $\delta_i-\epsilon_m$ is positive for all $i$.
	
\item \emph{Excluding $S^kV$ and $\Pi S^kV$ for $k>2$}: If $S^kV$ is spherical then $S^kV_{\ol{0}}$ is $\g_{0}$-spherical, which implies $m=1$- but if $m=1$ the odd part of $S^kV$ will be too large.  If $\Pi S^kV$ is spherical, then $S^{k-1}V_{\ol{0}}\otimes V_{\ol{1}}$ is $\g_0$-spherical, which implies either $m=1$ or $n=1$.  In both of these cases, however, the odd part of $\Pi S^kV$ will be too large.  
 
We are left to look at $(\Pi)S^2V$.
	
\item \emph{$S^2V$ is always spherical:}  A homogeneous basis for $S^2V$ is 
\[
\text{Even}: \ v_iv_j \ \ i\leq j,\ \  w_iw_j \ \ i<j;\ \ \text{Odd}:\ \  v_iw_j
\]  
	
The vector $v=v_1^2+\dots+v_m^2+w_1w_2+\cdots+w_{n-1}w_n$ is a $\g_0$-spherical vector, so by \cref{cspher_vec}, $S^2V$ is spherical with respect to a Borel $\b$ defined by an $\epsilon\delta$ string if and only if $\b\cdot v=V$.  
	
Now a Borel $\b$ defined by an $\epsilon\delta$ string will contain the odd operators $w_i\d_{v_j}$ exactly when $\delta_i-\epsilon_j$ is positive, and will have odd operators $v_i\d_{w_j}$ exactly when $\epsilon_i-\delta_j$ is positive.  Further, we see that $\frac{1}{2}w_i\d_{v_j}(v)=w_iv_j$ and $v_i\d_{w_j}(v)=v_i(w_{j-1}+w_{j+1})$, where we let $w_{0}=w_{n+1}=0$.  

Now fix an $i$ with $1\leq i\leq m$.  Because of our choice of even Borel, we observe that if $\epsilon_i-\delta_j$ is positive, then so is $\epsilon_i-\delta_k$ for $k>j$.  Also, if $\delta_j-\epsilon_i$ is positive, so is $\delta_k-\epsilon_i$ for $k<j$.  Hence we may choose $j$ so that for a $k\leq j$, $\delta_k-\epsilon_i$ is positive, and for $j<k$, $\epsilon_i-\delta_k$ is positive.  Then in $\b_{\ol{1}}\cdot v$ we will get all monomials $v_iw_k$ for all $k\leq j$ along with all terms $v_i(w_{k-1}+w_{k+1})$ for $j<k\leq n$.  It is now a linear algebra exercise to show that these monomials span the subspace spanned by $\{v_iw_k:1\leq k\leq n\}$ if and only if $n-j$ is even.  Therefore, $\b_{\ol{1}}\cdot v=(S^2V)_{\ol{1}}$ if and only if the number of $\delta$'s appearing after any $\epsilon$ in the $\epsilon\delta$-string is even- i.e. the $\epsilon\delta$ string must be of the form 
\[
\delta^{i_1}\epsilon^{j_1}\delta^{2i_2}\epsilon^{j_2}\delta^{2i_3}\cdots\epsilon^{j_k}\delta^{2i_l}
\]

\item \emph{Examination of $\Pi S^2V$:} The even part is $V_{\ol{0}}\otimes V_{\ol{1}}$, and its odd part is $S^2V_{\ol{0}}\oplus \Lambda^2V_{\ol{1}}$.  Therefore the dimension of the odd part is $\frac{n^2+m^2+m-n}{2}$.  This is less than or equal to $nm$ if and only if  $0\leq n-m\leq 1$.  This leaves the cases $V=GL_{n|n}$ and $V=GL_{n|n+1}$.  We show these modules are indeed spherical.  
	
	For the case of $V=GL_{n|n}$, a $\g_0$-spherical vector is given by $v=v_1w_1+\dots+v_mw_m$.  Let $\b$ be a Borel defined by an $\epsilon\delta$ string.  Then $\Pi S^2V$ is spherical for a Borel $\b$ if and only if $\b_{\overline{1}}\cdot v=(\Pi S^2V)_{\ol{1}}$.  The Borel has odd operators exactly as described in the case for $S^2V$.  Here we see that $v_i\d_{w_j}(v)=v_iv_j$  and $w_i\d_{v_j}(v)=w_iw_j$.  Now in order to get $v_1^2$, we must have $\epsilon_1-\delta_1$ is positive, so the $\epsilon\delta$ string must start with $\epsilon$.  In order to get $w_1w_2$, we must have $\delta_1-\epsilon_2$ is positive, so the $\epsilon\delta$ string must start with $\epsilon\delta$.  Similarly, to get $v_2^2$ we must have $\epsilon_2-\delta_2$ positive, and so on, so that continuing this way we find that the Borel must have $\epsilon\delta$ string $\epsilon\delta\epsilon\delta\cdots\epsilon\delta$.  Further, the representation is spherical with respect to this Borel.
	
	For $V=GL_{n|n+1}$, a $\g_0$-spherical vector is given by $v=v_1w_2+v_2w_3+\dots+v_mw_{m+1}$.  Let $\b$ be a Borel defined by an $\epsilon\delta$ string.  We see that $v_i\d_{w_j}(v)=v_iv_{j-1}$, and $w_i\d_{v_j}(v)=w_iw_{j+1}$.  Following the same idea as in the previous case, in order to get $w_1w_2$ we must have $\delta_1-\epsilon_1$ positive, and to get $v_1^2$ we must have $\epsilon_1-\delta_2$ positive, and continuing on like this we find that our Borel must have $\epsilon\delta$ string $\delta\epsilon\delta\epsilon\cdots\epsilon\delta$.  Further, this representation is spherical with respect to this Borel.
\end{enumerate}
\end{proof}
	
\begin{remark}
	The representation $S^2GL_{m|2n}$ and its relation to the Capelli problem for the symmetric pair $(\g\l(m|2n),\o\s\p(m|2n))$ is studied in \cite{sahi2016capelli}.
\end{remark}

\subsection{$\g\l(1|n)$}

\textbf{Candidate even weights:}
\[
t\epsilon_1\ (t\neq0),\ \ \ \ -\epsilon_1+\delta_1,\ \ \ \ -\delta_n  
\]
\[
-2\epsilon_1+2\delta_1, \ \ \ \ -2\delta_n,\ \ \ \ -\epsilon_1+\delta_1+\delta_2,\ \ \ \ -\delta_{n-1}-\delta_n  
\]
where $t\in\C$.  

\

\textbf{Check for (numerical) sphericity of $L(\lambda)$:} In the following table we go through each possible case, grouping them appropriately according to how we either prove they do not give a (numerically) spherical representation or explain why they are covered by \cref{gl_std_reps}.  The technique is the exact same as what was used when studying $\g\l(1|2)$.

\renewcommand{\arraystretch}{1.4}
\[
\begin{tabular}{|c|c|c|}
\hline
$\lambda$ & Action & Conclusion\\
\hline
$t\epsilon_1$, $t\notin\Z_{>0}$ & \makecell{ Weight is typical; apply\\ three odd reflections, get two\\ odd highest weights} & Odd part too large\\
\hline
$t\epsilon_1$, $t\in\Z_{>0}$ & \multirow{6}{*}{\makecell{After adding multiple of Berezinian\\ get something of the form\\ $(\Pi)S^kGL_{1|n}^{(*)}$ or $(\Pi)\Lambda^kGL_{1|n}^{(*)}$}} & \multirow{6}{*}{\makecell{Falls under cases considered\\ by \cref{gl_std_reps}}}\\
$-\epsilon_1+\delta_1$ &&\\
$-\delta_n$ &&\\
$-2\delta_n$&&\\
$-\epsilon_1+\delta_1+\delta_2$ &&\\
$-\delta_{n-1}-\delta_n$&&\\
\hline 
$-2\epsilon_1+2\delta_1$ & \makecell{Apply $r_{\epsilon_1-\delta_2}\circ r_{\epsilon_1-\delta_1}$ get odd\\ highest weight $-3\epsilon_1+2\delta_1+\delta_2$} & Odd part too large\\
\hline
\end{tabular}
\]
\renewcommand{\arraystretch}{1}

\

\textbf{Candidate odd weights:}
\[
t\epsilon_1 \ (t\neq0),\ \ \ t\epsilon_1+\delta_1,\ \ \ -\delta_n
\]
\textbf{Check for (numerical) sphericity of $\Pi L(\lambda)$:}
\renewcommand{\arraystretch}{1.5}
\[
\begin{tabular}{|c|c|c|}
\hline
$\lambda$ & Action & Conclusion\\
\hline 
$t\epsilon_1$, $t\in\Z_{>0}$ & \multirow{3}{*}{\makecell{After adding multiple of Berezinian\\ get something of the form\\ $(\Pi)S^kGL_{1|n}^{(*)}$ or $(\Pi)\Lambda^kGL_{1|n}^{(*)}$}} & \multirow{3}{*}{\makecell{Falls under cases considered\\ by \cref{gl_std_reps}}}\\
$t\epsilon_1+\delta_1$, $t=\pm1$ & &\\
$-\delta_n$ & &\\
\hline
$t\epsilon_1$, $t\notin\Z_{> 0}$ & \multirow{2}{*}{\makecell{Apply $r_{\epsilon_1-\delta_2}\circ r_{\epsilon_1-\delta_1}$; get\\ new odd highest weight}} & \multirow{2}{*}{Odd part too large}\\
$t\epsilon_1+\delta_1$, $t\neq\pm1$ & &\\
\hline 
\end{tabular}
\]
\renewcommand{\arraystretch}{1}

\textbf{(Numerically) Spherical irreducibles for $\g\l(1|n)$, $n\geq 3$:} Along with $(\Pi)\C$, we have the following numerically spherical irreducibles:
\[
(\Pi)GL_{1|n},\ \ \ \ S^2GL_{1|n},\ \ \ \ \Lambda^2GL_{1|n}
\]

\textbf{Indecomposable Spherical Modules:}  The only extensions which could be spherical in this case are extensions of the trivial even module by a numerically spherical module, or an extension of $\Pi GL_{1|n}$ by a numerically spherical module which is geometrically equivalent to either $\Pi\C$ or $\Pi GL_{1|n}$.

In this case, the trivial even module has non-trivial extensions only with $\Pi^n(S^{n-1}V^*)_{-\Ber}$ and $\Pi^n(S^{n-1}V)_{\Ber}$.  These modules are never numerically spherical for $n\geq 3$.

The module $\Pi GL_{1|n}$ has non-trivial extensions only with $\Pi^{n-1}(S^{n-2}V^*)_{-\Ber}$ and $L(n\epsilon_1-\delta_2-\dots-\delta_n)$ (up to a parity shift).  The latter module is not numerically spherical.  Although the former is numerically spherical, its extensions with $\Pi GL_{1|n}$ will never be numerically spherical.  It follows that we get no new spherical indecomposables.

\subsection{The General Case $\g\l(m|n)$, $m\geq 2$}

\textbf{Candidate even weights:}
\[
t{\det}_\epsilon\  (t\neq0),\ \ \epsilon_1,\ \ {\det}_\epsilon-\epsilon_m,\ \ 2\epsilon_1, \ \ 2{\det}_\epsilon-2\epsilon_m \ (m>2) 
\]
\[
\epsilon_1+\epsilon_2 \ (m>2),\ \ {\det}_\epsilon-\epsilon_{m-1}-\epsilon_m  \ (m>2)
\]
\[
-{\det}_\delta+\delta_1,\ \ -\delta_n,\ \ -2{\det}_\delta+2\delta_1 \ (n>2)\ \ -2\delta_n 
\]
\[
-{\det}_\delta+\delta_1+\delta_2 \ (n>2),\ \ -\delta_{n-1}-\delta_{n} \ (n>2)
\]	
\[
-{\det}_\epsilon+\epsilon_1+\delta_1,\ \ \epsilon_1-\delta_n, \ \ -\epsilon_m+\delta_1,\ \ {\det}_\epsilon-\epsilon_m-\delta_n
\]
\textbf{Check for (numerical) sphericity of $L(\lambda)$:}

\renewcommand{\arraystretch}{1.3}
\[
\begin{tabular}{|c|c|c|}
\hline
$\lambda$ & Action & Conclusion\\
\hline
$t\det_{\epsilon}$, $t=0,\pm1$ & \multirow{11}{*}{\makecell{After adding multiple\\ of Berezinian\\ $L(\lambda)$ becomes\\ something of the form\\ $(\Pi)S^kGL_{m|n}^{(*)}$ or \\$(\Pi)\Lambda^kGL_{m|n}^{(*)}$}} & \multirow{11}{*}{\makecell{Falls under cases considered\\ by \cref{gl_std_reps}}}\\
$\epsilon_1$&&\\
$\det_{\epsilon}-\epsilon_m$ &&\\
$2\epsilon_1$ &&\\
$\epsilon_1+\epsilon_2$, $m>2$&&\\
$\det_{\epsilon}-\epsilon_{m-1}-\epsilon_m$, $m>2$,  &&\\
$-\det_{\delta}+\delta_1$&&\\
$-\delta_n$&&\\
$-2\delta_n$&&\\
$-\det_{\delta}+\delta_1+\delta_2$, $n>2$ &&\\
$-\delta_{n-1}-\delta_n$, $n>2$&&\\
\hline
$2\det_{\epsilon}-2\epsilon_m$, $m>2$ & \makecell{Apply $r_{\epsilon_{m-1}-\delta_1}\circ r_{\epsilon_{m}-\delta_1}$\\ get odd \\highest weight\\ $2\epsilon_1+\dots+2\epsilon_{m-2}+$\\ $\epsilon_{m-1}+\delta_1$} & Odd part too large\\
\hline  
$-2\det_{\delta}+2\delta_1$, $n>2$ & \makecell{Apply $r_{\epsilon_{m}-\delta_2}\circ r_{\epsilon_{m}-\delta_1}$\\ get odd \\highest weight\\ $-\epsilon_m-\delta_2-$\\$2\delta_3-\dots-2\delta_n$} & Odd part too large\\
\hline
\end{tabular}
\]
\renewcommand{\arraystretch}{1}
The final cases to consider for $\lambda$ are:
\[
-{\det}_{\epsilon}+\epsilon_1+\delta_1,\ \ \epsilon_1-\delta_n,\ \ -\epsilon_m+\delta_1,\ \ {\det}_{\epsilon}-\epsilon_m-\delta_n
\]
\begin{itemize}
	\item[$\bullet$] If $\lambda=-\det_{\epsilon}+\epsilon_1+\delta_1$, then applying $r_{\epsilon_m-\delta_2}\circ r_{\epsilon_m-\delta_1}$ we get odd highest weight $-\det_{\epsilon}+\epsilon_1-\epsilon_m+\delta_1+\delta_2$.  If $n>2$, then this shows the odd part is too large.  
	
	If $n=m=2$, then $\lambda=-\epsilon_2+\delta_1$.  Applying $r_{\epsilon_2-\delta_2}\circ r_{\epsilon_2-\delta_1}$ gives odd highest weight $-2\epsilon_2+\delta_1+\delta_2$, generating a $3$-dimensional submodule of the odd part.  On the other hand, if we instead applied $r_{\epsilon_1-\delta_1}\circ r_{\epsilon_2-\delta_1}$, we'd get odd highest weight $-\epsilon_1-\epsilon_2+2\delta_1$, giving a distinct $3$-dimension submodule of the odd part.  Therefore, the odd part has to be at least 6-dimensional, which is too large.
		
	\item[$\bullet$]  If $\lambda=\epsilon_1-\delta_n$, this is the parity shift of the adjoint module, which has too large an odd part.
		
	\item[$\bullet$] If $\lambda=-\epsilon_m+\delta_1$, then applying $r_{\epsilon_m-\delta_1}$ followed by either $r_{\epsilon_m-\delta_2}$ or $r_{\epsilon_{m-1}-\delta_1}$ gives odd highest weights $-\epsilon_{m-1}-\epsilon_m+2\delta_1$ and $-2\epsilon_m+\delta_1+\delta_2$.  This shows the odd part will be too large.  
		
	\item[$\bullet$] Finally, if $\lambda=\det_{\epsilon}-\epsilon_m-\delta_n$, then applying $r_{\epsilon_{m-1}-\delta_1}\circ r_{\epsilon_m-\delta_1}$ gives odd highest weight $\epsilon_1+\dots+\epsilon_{m-2}+\delta_1-\delta_n$, which is of dimension $n^2-1+m(m-1)/2$, which is bigger than $nm$ whenever $n>2$, $m\geq 2$. 
	
	Therefore we can assume $n=m=2$, in which case $n^2-1+m(m-1)/2=4$.  Then we can also apply $r_{\epsilon_2-\delta_2}\circ r_{\epsilon_2-\delta_1}$ to get distinct odd highest weight $\epsilon_1-\epsilon_2$, making the odd part too large.
\end{itemize}

\textbf{Candidate odd weights:}
\[
t{\det}_{\epsilon}\  (t\neq0),\ \ \ \epsilon_1, \ \ \ \epsilon_1+\epsilon_2 \ (m\geq 3), \ \ \ 2\epsilon_1
\]
\[
-\delta_n, \ \ \ -\delta_{n-1}-\delta_n \ (n\geq 3), \ \ \ -2\delta_n
\]
\[
\epsilon_1+\delta_1-{\det}_{\epsilon}, \ \ \ \epsilon_1-\delta_n, \ \ -\epsilon_m+\delta_1,\ \ \ -\epsilon_m-\delta_n+{\det}_{\epsilon}
\]
\textbf{Check for (numerical) sphericity of $\Pi L(\lambda)$:}

\renewcommand{\arraystretch}{1.3}
\[
\begin{tabular}{|c|c|c|}
\hline
$\lambda$ & Action & Conclusion\\
\hline
$t\det_{\epsilon}$, $t=0,\pm1$ &  \multirow{7}{*}{\makecell{After adding multiple\\ of Berezinian\\ $\Pi L(\lambda)$ becomes\\ something of the form\\ $(\Pi)S^nGL_{m|n}^{(*)}$ or \\$(\Pi)\Lambda^nGL_{m|n}^{(*)}$}} & \multirow{7}{*}{\makecell{Falls under cases considered\\ by \cref{gl_std_reps}}}\\
$\epsilon_1+t\det_{\epsilon}$, $t=0$ &&\\
$2\epsilon_1+t\det_{\epsilon}$, $t=0$ &&\\
$\epsilon_1+\epsilon_2+t\det_{\epsilon}$, $m>2$, $t=0$&&\\
$-\delta_{n}+t\det_{\delta}$, $t=0$&&\\
$-2\delta_{n}+t\det_{\delta}$, $t=0$&&\\
$-\delta_{n-1}-\delta_{n}+t\det_{\delta}$, $n>2$ $t=0$&&\\
\hline
\end{tabular}
\]
\renewcommand{\arraystretch}{1}
This leaves us to consider the cases for $\lambda$:
\[
t{\det}_{\epsilon} \ t\neq 0,\pm1,\ \ -{\det}_{\epsilon}+\epsilon_1+\delta_1,\ \ \epsilon_1-\delta_n,\ \ -\epsilon_m+\delta_1, \ \ -{\det}_{\epsilon}-\epsilon_m-\delta_n
\]
\begin{itemize}
	\item[$\bullet$]  If $\lambda=t{\det}_{\epsilon} \ t\neq 0,\pm1$, then applying $r_{\epsilon_m-\delta_1}$ we get even highest weight $t\det_{\epsilon}-\epsilon_m+\delta_1$.  Next we may apply either $r_{\epsilon_{m-1}-\delta_1}$ or $r_{\epsilon_m-\delta_2}$, giving odd highest weights $t\det_{\epsilon}-\epsilon_{m-1}-\epsilon_m+2\delta_1$, or $t\det_{\epsilon}-2\epsilon_m+\delta_1+\delta_2$.  It follows the odd part will be too large.
	
	\item[$\bullet$] If $\lambda=-{\det}_{\epsilon}+\epsilon_1+\delta_1$ and $m=n=2$, then $\lambda$ differs by a multiple of the Berezinian with $\epsilon_1-\delta_2$, giving the adjoint representation which is not numerically spherical.  If $n>2$, then applying $r_{\epsilon_{m}-\delta_2}\circ r_{\epsilon_{m}-\delta_1}$ gives an even weight which is not $\g_{\ol{0}}$-spherical.  
	
	\item If $\lambda=\epsilon_1-\delta_n$ we get the adjoint module, which is not numerically spherical.
	
	\item If $\lambda=-\epsilon_m+\delta_1$, then if $m=n=2$ we again get a twist of the adjoint module, while if $n>2$, applying $r_{\epsilon_m-\delta_2}\circ r_{\epsilon_m-\delta_1}$ gives an even weight which is not $\g_{\ol{0}}$-spherical.
	
	\item Finally, if $\lambda=\det_{\epsilon}-\epsilon_m-\delta_n$ then when $n=m=2$ we get the adjoint module, while if $n>2$ then applying $r_{\epsilon_{m-1}-\delta_1}\circ r_{\epsilon_m-\delta_1}$ we get an even weight which is not $\g_{\ol{0}}$-spherical. 	  
 	
\end{itemize}

\textbf{Spherical irreducibles for $\g\l(m|n)$, $2\leq m\leq n$:} Apart from $(\Pi)\C$, numerically spherical irreducibles are all spherical in this case.
\[
GL_{m|n}, \ \ \Pi GL_{m|n},\ \  S^2GL_{m|n},\ \ \Lambda^2GL_{m|n},\ \ \Pi S^2GL_{n|n}, \ \ \Pi S^2GL_{n|n+1}
\]
We note however that if we remove the condition that $m\leq n$, then $\Pi GL_{m|n}$ is equivalent to $GL_{n|m}$ and $\Lambda^2GL_{m|n}$ is equivalent to $S^2GL_{n|m}$.

\

\textbf{Spherical indecomposables for $\g\l(m|n)$, $2\leq m\leq n$:}  In this case,  $S^2GL_{m|n}$, $\Lambda^2GL_{m|n},\Pi S^2GL_{m|m}$, and $\Pi S^2GL_{m|m+1}$ all have odd dimension equal to the odd dimension of a Borel.  Hence the only possible spherical extensions these modules could have are by a one-dimensional even module.  However there are no such extensions.  In fact, in this case the trivial module has no non-trivial extensions by any (numerically) spherical irreducibles, and nor does its parity shift.

This leaves us to look at the standard module.  However neither it nor its parity shift admits non-trivial extensions by numerically spherical modules.  It follows that all spherical indecomposable $\g\l(m|n)$-modules for $2\leq m\leq n$ are all irreducible.

\section{$\o\s\p(m|2n)$ Case}\label{osp_case}

We now study the case $\g=\o\s\p(m|2n)$, with  $m,n>0$, $(m,n)\neq (2,1)$ (since $\o\s\p(2|2)\cong\s\l(1|2)$).  For more about $\o\s\p(m|2n)$ and a description of $\b^{st}$-dominant weights, see \cite{cheng2012dualities} and \cite{musson2012lie}.   We refer the reader to appendix \hyperref[App_borels]{A} for a discussion of our notation for Borels and root systems.
\subsection{Modules From the Standard Representation}

Write $OSP_{m|2n}$ for the standard representation of $\o\s\p(m|2n)$ on $\C^{m|2n}$.
\begin{prop}\label{osp_std} \
	
	\begin{enumerate}
		\item  $OSP_{m|2n}$ is spherical if and only if $m\geq 2$.  In this case it is spherical exactly with respect to Borels with $\epsilon\delta$ strings of the form $\epsilon\cdots$, or $(\pm\epsilon)\delta^n$ when $m=2$.
		\item  $\Pi OSP_{m|2n}$ is always spherical exactly with respect to Borels with $\epsilon\delta$ strings of the form $\delta\cdots$.
	\end{enumerate} 
\end{prop}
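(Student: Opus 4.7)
My plan is to handle the two claims separately, reducing each to a purely weight-theoretic question via \cref{cspher_vec}.

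For $OSP_{m|2n}$, the case $m=1$ is ruled out at the numerical level: every Borel of $\o\s\p(1|2n)$ has $\dim\b_{\ol{1}}=n$, whereas $\dim V_{\ol{1}}=2n$, so $\b_{\ol{1}}\cdot v$ cannot span $V_{\ol{1}}$ for any even $v$. For $m\geq 2$, the product $O(m)\times Sp(2n)$ acts on $V_{\ol{0}}=\C^m$ with open orbit the non-isotropic locus, so $V_{\ol{0}}$ is spherical over $\g_{\ol{0}}$, and by \cref{cspher_vec} it suffices to decide whether $\b_{\ol{1}}\cdot v=V_{\ol{1}}$ for a single non-isotropic $v\in V_{\ol{0}}$. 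On the necessary side I would argue by a weight obstruction: if the $\epsilon\delta$-string begins with $\delta$, so that $\delta_1(h)$ strictly dominates $|\epsilon_i(h)|$ and $\delta_j(h)$ for all $i,j$, then the $-\delta_1$ weight space of $V_{\ol{1}}$ cannot be reached. Indeed, for any weight $\mu\in\{0,\pm\epsilon_i\}$ of $V_{\ol{0}}$ and any positive odd root $\alpha$, the equation $\alpha+\mu=-\delta_1$ forces $\alpha=-\delta_1-\mu$, which one checks in each sub-case to be non-positive. For sufficiency, when the string has the form $\epsilon\cdots$ with $m\geq 3$ I would take $v=e_1+e_{-1}$ and reach each $\pm\delta_j$ via the odd root vectors $X_{\pm\delta_j+\epsilon_1}\cdot e_{-1}$ and $X_{\pm\delta_j-\epsilon_1}\cdot e_1$; these roots are positive because $|\epsilon_1(h)|$ dominates every $\delta_j(h)$.

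For $\Pi OSP_{m|2n}$ the structure is symmetric: now $V_{\ol{0}}=\C^{2n}$ is the standard symplectic module, $Sp(2n)$ acts transitively off zero, and any nonzero vector is $\g_{\ol{0}}$-spherical. If the string begins with $\epsilon$, the $-\epsilon_1$ weight vector in $V_{\ol{1}}$ is unreachable by the mirror of the previous argument, with $\epsilon_1$ now playing the role of $\delta_1$. Conversely, if it begins with $\delta$, I would pick $v=f_{-1}$, which lies in the open $\b_{\ol{0}}$-orbit because the positive symplectic root vectors $X_{2\delta_1}$ and $X_{\delta_1\pm\delta_j}$ carry it to every other weight vector of $V_{\ol{0}}$; then every weight $\mu\in\{\pm\epsilon_i,0\}$ of $V_{\ol{1}}$ is realized as $X_{\delta_1+\mu}\cdot f_{-1}$, and $\delta_1+\mu$ is automatically a positive odd root since $\delta_1(h)>|\mu(h)|$.

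The main obstacle is the anomaly at $m=2$ in the first half: there $V_{\ol{0}}$ carries only the weights $\pm\epsilon_1$, so the sufficient argument for $m\geq 3$ does not cover arbitrary $\epsilon$-first strings, and the intermediate strings $\delta^i(\pm\epsilon)\delta^{n-i}$ with $1\leq i\leq n$ must be ruled out directly. Here I would repeat the $-\delta_1$ weight obstruction with the sharper constraint $\mu\in\{\pm\epsilon_1\}$: both $\alpha=-\delta_1-\epsilon_1$ and $\alpha=-\delta_1+\epsilon_1$ fail to be positive as soon as some $\delta_\ell(h)$ exceeds $\epsilon_1(h)$, which isolates exactly the strings $(\pm\epsilon)\delta^n$ as the spherical Borels for $m=2$.
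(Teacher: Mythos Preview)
Your approach matches the paper's: pick the explicit vector $v=e_1+e_{-1}$ (resp.\ $v=f_{-1}$), invoke \cref{cspher_vec}, and reduce to a root-positivity check on $\b_{\ol 1}$. Your necessity arguments via weight obstructions are more detailed than the paper's terse statement of the conditions, but the content is identical.

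Two small points to clean up. First, \cref{cspher_vec} requires $v$ to be a $\b_{\ol{0}}$-spherical vector, not merely non-isotropic; for instance $e_0+e_1\in\C^3$ is non-isotropic but not in the open Borel orbit for $\o(3)$. Your chosen $v=e_1+e_{-1}$ \emph{is} $\b_{\ol{0}}$-spherical, so the argument stands, but the justification should say this rather than appeal to non-isotropy. Second, in your sufficiency step for $m\geq3$ only the roots $\epsilon_1\pm\delta_j$ (applied to $e_{-1}$) are positive; the roots $\pm\delta_j-\epsilon_1$ you also list are negative and contribute nothing, so the phrase ``these roots are positive'' is only half right. The same computation handles sufficiency for $m=2$ with string $(\pm\epsilon)\delta^n$, which you isolate correctly but do not explicitly verify. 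Finally, for $\Pi OSP_{2|2n}$ you should also dispose of strings starting with $-\epsilon$; this is immediate by the same obstruction with the roles of $\epsilon_1$ and $-\epsilon_1$ swapped.
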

\begin{proof}
For the first statement, $OSP_{1|2n}$ is not spherical because the odd dimension of any Borel of $\o\s\p(1|2n)$ is $n$, which is too small.  The rest is straightforward.
\end{proof} 
	
\

\subsection{$\o\s\p(2|2n)$, $n\geq 2$}
\textbf{Standard Borel:} $\b^{st}=\b^{\epsilon\delta^n}$.  The odd dimension of every Borel is $2n$.

\

\textbf{Spherical weights for $\g_{\ol{0}}=\o(2)\times\s\p(2n)$}:
\begin{align}\label{osp22n_wts}
\delta_1+s\epsilon_1, \ \ \ s\epsilon_1 \ (\nu\neq0), \ \ \ \delta_1+\delta_2+s\epsilon_1 \ (n=2)
\end{align}

\

\textbf{Candidate even weights} 
\[
\delta_1-\epsilon_1,\ \ \  s\epsilon_1\  (s\neq0),\ \ \ \delta_1+\delta_2-\epsilon_1 (n=2)
\]

\

\textbf{Candidate odd weights}
\[
s\epsilon_1\ (s\neq 0), \ \ \ \ -\epsilon_1+\delta_1
\]

\

\textbf{Check for (numerical) sphericity of $(\Pi)L(\lambda)$:}
\renewcommand{\arraystretch}{1.3}
\[
\begin{tabular}{|c|c|c|c|}
\hline
$\lambda$ & Parity & Action & Conclusion\\
\hline
$\delta_1-\epsilon_1$ & even & \multirow{3}{*}{\makecell{Apply $r_{\epsilon_1-\delta_2}\circ r_{\epsilon_1-\delta_1}$; get\\ new even or odd highest weight}} & \multirow{3}{*}{\makecell{Odd part too large\\ or even part not spherical}}\\
$s\epsilon_1$, $n>2$, $s\neq0,1$ & even & &\\
$s\epsilon_1$, $s\neq0,1$ & odd & &\\
\hline 
$s\epsilon_1$, $s=1$ & even & \multirow{3}{*}{Do nothing} & \multirow{3}{*}{\makecell{Isomorphic to standard module \\ up to parity shift. \\ Covered by \cref{osp_std}.}}\\
&&&\\
$s\epsilon_1$, $s=1$ & odd &&\\
\hline
$s\epsilon_1$, $n=2$, $s=3$& even & \multirow{3}{*}{\makecell{Do nothing}} & \multirow{3}{*}{\makecell{Isomorphic to $L(\delta_1+\delta_2-\epsilon)$ \\ or its dual- a spherical module}}\\
\makecell{$\delta_1+\delta_2-\epsilon_1$,\\ $n=2$} & even &&\\
\hline
\makecell{$s\epsilon_1$,\\$n=2$, $s\neq 1,3$} & even & \multirow{4}{*}{\makecell{Apply the odd reflections\\ $r_{\epsilon_1-\delta_1}$, $r_{\epsilon_1-\delta_2}$, $r_{\epsilon_1+\delta_2}$\\ Gives two odd highest weights}} & \multirow{4}{*}{Odd part too large.}\\
\makecell{$\delta_1-\epsilon_1$, \\$n=2$}& odd &&\\
\hline
\makecell{$\delta_1-\epsilon_1$,\\$n>2$} & odd & \makecell{Apply $r_{\epsilon_1-\delta_3}\circ r_{\epsilon_1-\delta_2}\circ r_{\epsilon_1-\delta_1}$\\ Get even highest weight vector} & Even part not spherical\\
\hline 
\end{tabular}
\]
\renewcommand{\arraystretch}{1}

\textbf{(Numerically) Spherical irreducibles for $\o\s\p(2|2n)$, $n\geq 2$}:  Apart from $\Pi\C$, all numerically spherical irreducibles are spherical.	
\[
OSP_{2|2n},\ \ \ \Pi OSP_{2|2n},\ \ \ L_{\b^{st}}(\delta_1+\delta_2-\epsilon_1)
\]

\

\subsection{$\o\s\p(2m|2n)$, $m\geq 2$}
\textbf{Standard Borel} $\b^{st}=\b^{\delta^n\epsilon^m}$.

\

\textbf{Spherical weights for $\g_{\ol{0}}=\o(2m)\times\s\p(2n)$:}
\renewcommand{\arraystretch}{1.4}
\[
\begin{tabular}{|c|c|}
\hline
$(m,n)$ & Weights\\
\hline
Any $m,n$ & $\delta_1, \epsilon_1$\\
\hline
Any $n$; $m=2,3,5$ & 	$\frac{1}{2}(\epsilon_1+\dots\pm\epsilon_m)$\\
\hline
Any $n$; $m=2,3$ & $\epsilon_1+\dots\pm\epsilon_m$\\
\hline
\makecell{Any $n$ and $m=2$,\\ or $n\leq 2$, $m=3$} & $\frac{1}{2}(\epsilon_1+\dots\pm\epsilon_m)+\delta_1$ \\
\hline
Any $m$; $n=2$ & $\delta_1+\delta_2$\\
\hline
Any $m$; $n=1$ & $2\delta_1$\\
\hline
\end{tabular}
\]
\renewcommand{\arraystretch}{1}

\textbf{Candidate even weights:} Only $\lambda=\delta_1$.  Although $\delta_1+\delta_2$ when $n=2$ and $2\delta_1$ when $n=1$ are dominant, applying a simple odd reflection gives an odd highest weight which is too large.

\

\textbf{Candidate odd weights:} Only $\lambda=\delta_1$.

\

\textbf{Check for (numerical) sphericity of $(\Pi)L(\lambda)$:}
\renewcommand{\arraystretch}{1.5}
\[
\begin{tabular}{|c|c|c|c|}
\hline
Weight & Parity & Action & Conclusion\\
\hline
$\delta_1$ & even & \multirow{2}{*}{Do nothing} & \multirow{2}{*}{\makecell{Parity shift of standard module;\\ covered by \cref{osp_std}}}\\
$\delta_1$ & odd &&\\
\hline
\end{tabular}
\]
\renewcommand{\arraystretch}{1}

\textbf{(Numerically) Spherical irreducibles for $\o\s\p(2m|2n)$, $m\geq 2$:}  Along with $(\Pi)\C$, we have two spherical irreducibles:
\[
OSP_{2m|2n},\ \ \ \Pi OSP_{2m|2n}
\]

\

\subsection{$\o\s\p(2m+1|2n)$}
 
\textbf{Standard Borel:} $\b^{st}=\b^{\delta^n\epsilon^m}$.

\

\textbf{Spherical weights for $\g_{\ol{0}}=\o(2m+1)\times\s\p(2n)$:}
\renewcommand{\arraystretch}{1.4}
\[
\begin{tabular}{|c|c|}
\hline
$(m,n)$ & Weights\\
\hline
Any $m,n$ & $\delta_1, \epsilon_1$\\
\hline
Any $n$; $m\leq 4$ & 	$\frac{1}{2}(\epsilon_1+\dots+\epsilon_m)$\\
\hline
\makecell{$(1,1)$, $(1,2)$,\\ or $(2,1)$} & $\frac{1}{2}(\epsilon_1+\dots+\epsilon_m)+\delta_1$ \\
\hline
Any $m$; $n=2$ & $\delta_1+\delta_2$\\
\hline
Any $m$; $n=1$ & $2\delta_1$\\
\hline
\end{tabular}
\]
\renewcommand{\arraystretch}{1}

\textbf{Candidate even weights:} Only $\delta_1$.  Again, the other $\b^{st}$-dominant weights appearing the above table have too large an odd part as seen by applying the odd simple reflection $r_{\delta_n-\epsilon_1}$.

\

\textbf{Candidate odd weights:} Only $\lambda=\delta_1$.  

\

\textbf{Check for sphericity of $(\Pi)L(\lambda)$:}
\renewcommand{\arraystretch}{1.5}
\[
\begin{tabular}{|c|c|c|c|}
\hline
Weight & Parity & Action & Conclusion\\
\hline
$\delta_1$ & even & \multirow{2}{*}{Do nothing} & \multirow{2}{*}{\makecell{Parity shift of standard module;\\ covered by \cref{osp_std}}}\\
$\delta_1$ & odd &&\\
\hline
\end{tabular}
\]
\renewcommand{\arraystretch}{1}

\textbf{Spherical indecomposables for $\o\s\p(m|2n)$, $(m,n)\neq (2,1)$:}  The trivial module has a distinct central character from $OSP_{m|2n}$ for all $(m,n)$, and therefore there are no new spherical indecomposables for $\o\s\p(m|2n)$ when $(m,n)\neq(2,2)$.

When $m=n=2$, the central character of $L_{\b^{st}}(\delta_1+\delta_2-\epsilon_1)$ is the same as that of $OSP_{2|4}$, and therefore it has no extensions with a trivial module.  Any extension of $L_{\b^{st}}(\delta_1+\delta_2-\epsilon_1)$ with $OSP_{2|4}$  will have too large an odd part, and therefore cannot be spherical.  So $\o\s\p(2|2n)$ also has no new spherical indecomposables.

Below is a table of all spherical irreducibles for $\o\s\p(m|2n)$, as well as Borels for which sphericity is achieved, and stabilizers of spherical vectors:

\renewcommand{\arraystretch}{1.5}
\[\hspace{-1em}	\begin{tabular}{ |c|c|c|c|c| } 
\hline 
Rep &  $\dim^s$ & Borels & Stabilizer\\
\hline
$OSP_{m|2n}$, $m\neq 1$ & $(m|2n)$ & $\epsilon\cdots$ & $\o\s\p(m-1|2n)$ \\
\hline 
$\Pi OSP_{m|2n}$ & $(2n|m)$ & $\delta\cdots$ & $\o\s\p(m-1|2n-2)\ltimes(\C\oplus \C^{m-1|2n-2})$\\
\hline
$L(\delta_1+\delta_2-\epsilon_1)$, $m=n=2$ &  $(6|4)$ & $(\pm\epsilon)\delta\delta,\delta\delta\epsilon$ & $\o\s\p(1|2)\times\o\s\p(1|2)$ \\ 
\hline
\end{tabular}
\]
\renewcommand{\arraystretch}{1}

\section{Exceptional Basic Simple Algebras}\label{ex_case}

Here we consider when $\g=G(1,2),F(1,3),$ and $D(1,2;a)$.  We show that none of these algebras have nontrivial spherical modules (unless $D(1,2;a)\cong\o\s\p(2|4)$).  We refer to L. Martirosyan's thesis \cite{martirosyan2014representation} for more on $G(1,2)$ and $F(1,3)$.  For the description of the root system of $D(1,2;a)$ used below, see \cite{serganova1996generalizations}, and for the parametrization of dominant weights we refer to example 10.7 of \cite{serganova2011kac}.

\subsection{$G(1,2)$ Case} 

\textbf{Root system:}  Take $\h^*$ to spanned by $\epsilon_1,\epsilon_2,\epsilon_3,\delta$ with the relation $\epsilon_1+\epsilon_2+\epsilon_3=0$, and inner product $(\epsilon_i,\epsilon_i)=-2(\epsilon_i,\epsilon_j)=-(\delta,\delta)=2$, where $i\neq j$.  Then the roots are given by:
\[
\Delta_{\ol{0}}=\{\pm\epsilon_i,\epsilon_i-\epsilon_j;\pm2\delta\},\ \ \ \Delta_{\ol{1}}=\{\pm\delta;\pm\delta\pm\epsilon_j\}.
\]

The following table goes through our usual to-do list, and shows there are no non-trivial numerically spherical irreducibles.

\[
\begin{tabular}{|c|c|}
\hline
\multirow{2}{*}{$\b^{st}$} & \multirow{2}{*}{Borel with simple roots $\delta+\epsilon_3,\epsilon_1,\epsilon_2-\epsilon_1$.}\\
&\\
\hline
\multirow{2}{*}{$\g_{\ol{0}}$-spherical weights} & \multirow{2}{*}{$\delta,\ \ 2\delta, \ \ \epsilon_1+\epsilon_2$}\\
&\\
\hline
Candidate even weights & \makecell{None; $2\delta$ is dominant, but applying\\ $r_{\delta+\epsilon_3}$ gives too large an odd part}\\
\hline
\multirow{2}{*}{Candidate odd weights} & \multirow{2}{*}{None}\\
&\\
\hline
\end{tabular}
\]

\subsection{$F(1,3)$ Case}

\textbf{Root system:}  We present $\h^*$ as the vector space with basis $\epsilon_1,\epsilon_2,\epsilon_3,\delta$, and inner product $(\epsilon_i,\epsilon_j)=\delta_{ij}$ and $(\delta,\delta)=-3$.  Then the roots of $\g$ are
\[
\Delta_{\ol{0}}=\{\pm\epsilon_i\pm\epsilon_j,\pm\epsilon_i;\pm\delta\} \ \ \ \Delta_{\ol{1}}=\{\frac{1}{2}(\pm\epsilon_1\pm\epsilon_2\pm\epsilon_3\pm\delta)\}
\]

Again the following table shows there are no non-trivial numerically spherical irreducibles.
\[
\begin{tabular}{|c|c|}
\hline
\multirow{3}{*}{$\b^{st}$} & \multirow{3}{*}{\makecell{Borel with simple roots\\ $\frac{1}{2}(-\epsilon_1-\epsilon_2-\epsilon_3+\delta),\epsilon_3,\epsilon_2-\epsilon_3,\epsilon_1-\epsilon_2$}}\\
&\\
&\\
\hline
\multirow{2}{*}{$\g_{\ol{0}}$-spherical weights} & \multirow{2}{*}{$\frac{1}{2}\delta,\ \ \delta,\ \ \epsilon_1, \ \ \frac{1}{2}(\epsilon_1+\epsilon_2+\epsilon_3)$}\\
&\\
\hline
Candidate even weights & \makecell{None; $\delta$ is dominant, but \\ applying $r_{\frac{1}{2}(-\epsilon_1-\epsilon_2-\epsilon_3+\delta)}$ gives \\ too large an odd part}\\
\hline
\multirow{2}{*}{Candidate odd weights} & \multirow{2}{*}{None}\\
&\\
\hline
\end{tabular}
\]

\subsection{$D(2,1;\alpha)$ Case}

\textbf{Root system:}  We may present $\h^*$ as $\C\langle \epsilon,\delta,\gamma\rangle$, with inner product
\[
(\epsilon,\delta)=(\epsilon,\gamma)=(\delta,\gamma)=0,\ \ \ (\epsilon,\epsilon)=\frac{1}{\alpha}(\delta,\delta)=-\frac{1}{1+\alpha}(\gamma,\gamma)=1
\]
The roots are:
\[
\Delta_{\ol{0}}=\{\pm\epsilon,\pm\delta,\pm\gamma\},\ \ \ \Delta_{\ol{1}}=\{\frac{1}{2}(\pm\epsilon\pm\delta\pm\gamma)\}
\]

\textbf{Standard Borel:} $\b^{st}$ is a Borel with three simple isotropic roots $\alpha_1,\alpha_2,\alpha_3$ (such a Borel is unique up to conjugacy).  Then the principal roots are $\beta_i=\alpha_j+\alpha_k$, where $\{i,j,k\}=\{1,2,3\}$, i.e. they are even roots which are simple after application of some number of odd reflections to the simple root system $\b^{st}$.

\

\textbf{$\b^{st}$-dominant weights:} Since the principal roots are a basis of the $\h_{\ol{0}}^*$, we may parametrize a weight $\lambda$ by $(c_1,c_2,c_3)$, where $c_i=\lambda(h_{\beta_i})$.  The conditions that $\lambda$ is dominant integral with respect to this Borel is that $c_1,c_2,c_3\in\Z_{\geq0}$, and one of the following holds:
\begin{enumerate}
	\item $c_1,c_2,c_3\in\Z_{>0}$
	\item $c_1=(a+1)c_2+c_3=0$
	\item $c_2=-ac_1+c_3=0$
	\item $c_3=-ac_1+(a+1)c_2=0$
\end{enumerate}

\

\textbf{Spherical weights for $\g_{\ol{0}}$:}  
\[
(x,0,0),\ \ (0,x,0),\ \ (0,0, x)\ \text{ where }x=1,2
\]
and
\[
(a,b,c)\ \text{ where two out of }a,b,c\text{ are 1, and the other is 0.}
\]

\

\textbf{Candidate even weights:}  
\[
(0,1,1) \text{ with } \alpha=-2, \ \ \ (1,0,1) \text{ with } \alpha=1
\]

\

\textbf{Check sphericity of $L(\lambda)$:}  We have $D(1,2;1)\cong D(1,2;-2)\cong\o\s\p(2|4)$, so these fall under \cref{osp_case}.

\

\textbf{Candidate odd weights:} 
\[
(0,1,1) \text{ with } \alpha=-2, \ \ \ (1,0,1) \text{ with } \alpha=1
\]

\

\textbf{Check sphericity of $\Pi L(\lambda)$:}  The cases are again covered by \cref{osp_case}.

\

\textbf{(Numerically) Spherical irreducibles for $D(1,2;\alpha)$:} None, unless $D(1,2;\alpha)\cong\o\s\p(2|4)$.  Therefore there are no new spherical indecomposable modules.

\section{The Case $\p(n)$}\label{p_case}

Let $\g=\p(n)$, $n\geq 2$.  We refer the reader to \cite{balagovic2016translation}, in particular for computations of dual representations, as well as \cite{chen2015finite} and \cite{serganova2002representations} for more on the representation theory of this algebra.

\textbf{Notation:}
A matrix presentation for $\p(n)$, under the representation of the standard module, is 
\begin{align}\label{p_pres}
\begin{bmatrix}A & B\\C & -A^{t}\end{bmatrix}
\end{align}
where $B^t=B$, $C^t=-C$.

We have a $\Z$-grading $\g=\g_{-1}\oplus\g_0\oplus\g_{1}$, where $\g_{0}=\g_{\ol{0}}$, $\g_{1}$ are matrices with $A=C=0$ and $\g_{-1}$ are matrices with $A=B=0$.  

Write $P_{n|n}$ for the standard module of $\p(n)$, and $q\in (S^2P_{n|n}^*)_{\ol{1}}$ for a non-degenerate odd form on $P_{n|n}$ preserved by $\p(n)$.  Then $q$ induces an isomorphism $P_{n|n}^*\cong\Pi P_{n|n}$.

\

\textbf{Root system:} Write $\h$ for the (even) Cartan subalgebra of diagonal matrices.  Let $\epsilon_1,\dots,\epsilon_n$ be the usual basis of $\h^*$.  Then the roots are:
\[
\Delta_{\ol{0}}=\{\epsilon_i-\epsilon_j\text{ for }i\neq j\}\ \ \ \Delta_{\ol{1}}=\{\epsilon_i+\epsilon_j\text{ for }i\leq j\}\sqcup\{-(\epsilon_i+\epsilon_j)\text{ for }i<j\}
\]

\

\textbf{Standard Borel:} $\b^{st}=\b_{\ol{0}}^{st}\oplus\g_{-1}$, where $\b_{\ol{0}}^{st}$ is the Borel of $\g_{\ol{0}}$ with simple roots $\epsilon_1-\epsilon_2,\dots,\epsilon_{n-1}-\epsilon_n$.     

%

\

\textbf{Characters for $\g$:}  Recall $[\p(n),\p(n)]$ is a codimension-one ideal of $\p(n)$ with one-dimensional even quotient.  The even irreducible representations of this quotient are indexed by the complex numbers, and pullback to multiples of the representation of highest weight $\omega=\epsilon_1+\dots+\epsilon_n$.  

\begin{prop}\label{p_std}
	The standard module $P_{n|n}$ is never spherical.  The parity shift $\Pi P_{n|n}$ is spherical exactly with respect to (up to conjugacy) Borels $\b$ with $\b_{\ol{0}}=\b_{\ol{0}}^{st}$, and $\epsilon_1+\epsilon_i$ positive for all $i$.
\end{prop}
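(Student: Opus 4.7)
The plan is to exploit the matrix presentation~\eqref{p_pres} directly. First I would use the fact that every Borel of $\p(n)$ has even part $GL(n)$-conjugate to $\b_{\ol{0}}^{st}$ to reduce, up to conjugacy, to the case $\b_{\ol{0}}=\b_{\ol{0}}^{st}$; then by \cref{cspher_vec} it suffices to test sphericity at any single $\b_{\ol{0}}^{st}$-spherical even vector in the module. For $P_{n|n}$ the even part is the standard $\g\l(n)$-module, whose open $B_{\ol{0}}^{st}$-orbit is $\{v : v_n\neq 0\}$, so I would take $v_n$. The odd part of $\g$ splits as $\g_{-1}\oplus \g_1$, corresponding to antisymmetric $C$-blocks and symmetric $B$-blocks. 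The $B$-block acts trivially on $v_n$ since it moves only $w$-coordinates, while $C\cdot v_n$ equals the $n$-th column of $C$, whose $n$-th entry vanishes by antisymmetry. Hence $\b_{\ol{1}}\cdot v_n\subseteq \operatorname{span}(w_1,\dots,w_{n-1})$, and since $\C\id$ acts evenly on $v_n$ it cannot produce $w_n$; this shows $P_{n|n}$ is not spherical for any $\b$.

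For $\Pi P_{n|n}$ I would apply the analogous reduction. Its even part is the dual standard representation with basis $w_1,\dots,w_n$ of weights $-\epsilon_1,\dots,-\epsilon_n$, and its open $B_{\ol{0}}^{st}$-orbit consists of vectors whose $w_1$ coefficient is nonzero, so I would take $w_1$. Now $C\cdot w_1=0$, whereas a symmetric block $B$ sends $w_1$ to $\sum_i B_{i1} v_i=\sum_i B_{1i} v_i$. Hence the root vector for $\epsilon_1+\epsilon_i$ (the symmetric matrix supported at the $(1,i)$ and $(i,1)$ entries) hits a nonzero scalar times $v_i$, while root vectors of weight $\epsilon_i+\epsilon_j$ with $i,j\geq 2$ annihilate $w_1$. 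Combined with $\b_{\ol{0}}^{st}\cdot w_1=(\Pi P_{n|n})_{\ol{0}}$, this yields $\b\cdot w_1=\Pi P_{n|n}$ exactly when every $\epsilon_1+\epsilon_i$ is a positive root of $\b$, which is the claimed condition.

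The remaining point is that such Borels exist and that the positivity requirement on all $\epsilon_1+\epsilon_i$ is compatible with the Borel axioms. A short direct computation shows that $[E_{1l},C_{ij}]=0$ whenever $2\leq i<j$, so the root vectors for $\epsilon_1+\epsilon_k$ decouple from those for $\epsilon_i+\epsilon_j$ with $i,j\geq 2$ under $\b_{\ol{0}}^+$-brackets; consequently one may choose any $\b_{\ol{0}}^+$-stable sign pattern on the latter roots without disturbing the positivity of the former. In particular, the ``anti-standard'' Borel $\b=\b_{\ol{0}}^{st}\oplus\g_1$ has all $\epsilon_i+\epsilon_j$ ($i\leq j$) positive and lies in the asserted family. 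I do not foresee any real obstacle: the two direct matrix computations are short, and the remaining Borel bookkeeping is routine.
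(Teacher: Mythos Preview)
Your argument is correct and is precisely the matrix computation the paper alludes to; the paper's own proof is the single sentence ``This is seen from the matrix presentation in~\eqref{p_pres},'' and you have simply written out that computation in full. The only superfluous part is the final paragraph on decoupling of root vectors: to exhibit a Borel with all $\epsilon_1+\epsilon_i$ positive it suffices to name the coweight $h=\operatorname{diag}(h_1,\dots,h_n)$ with $h_1>\cdots>h_n>0$, which gives $\b_{\ol{0}}^{st}\oplus\g_1$ directly.
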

\begin{proof} This is seen from the matrix presentation in \eqref{p_pres}.\end{proof}

We may now put aside the case when $n=1$:
\begin{prop}
	Up to equivalence, the only non-trivial indecomposable spherical module for $\p(1)$ is $\Pi P_{1|1}$.  The stabilizer of a spherical vector is trivial.
\end{prop}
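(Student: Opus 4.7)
The plan is to use the extremely limited structure of $\p(1)=\C h\oplus\C x$, where $[h,x]=2x$ and $x^2=0$. Since $2\epsilon_1$ is the unique odd root, there are only two Borels: the standard Borel $\b^{st}=\C h$ and its opposite $\b^{op}=\p(1)$ itself. The former contains no odd elements, so sphericity with respect to $\b^{st}$ would force $V_{\ol{1}}=0$, contradicting our standing hypothesis. Hence a spherical vector $v\in V_{\ol{0}}$ must satisfy $V=(\b^{op}+\C\id_V)v=\C v+\C hv+\C xv$, forcing $\dim V\leq 3$. Combined with $V_{\ol{0}}\neq 0$ (to house $v$) and $V_{\ol{1}}\neq 0$, only $\dim^sV=(1|1)$ and $\dim^sV=(2|1)$ are possible.

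I next rule out $\dim V=3$ for an indecomposable $V$. For $\{v,hv,xv\}$ to span a three-dimensional space, $v$ and $hv$ must be linearly independent in $V_{\ol{0}}$, so $v$ must have components in two distinct $h$-weight spaces: write $v=v_a+v_b$ with $a\neq b$ and both $v_a,v_b$ nonzero. Then $xv_a$ and $xv_b$ lie in the distinct weight subspaces $V_{\ol{1},a+2}$ and $V_{\ol{1},b+2}$ of the one-dimensional $V_{\ol{1}}$, so at most one of them is nonzero; say $xv_b=0$. Then $\C v_b$ is a $\p(1)$-submodule, and using $x^2=0$ one checks that $\C v_a\oplus\C(xv_a)$ is a complementary $\p(1)$-submodule, giving a direct sum decomposition of $V$ and contradicting indecomposability.

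For $\dim^sV=(1|1)$, the odd square-zero map $x$ can only act nontrivially from the even generator to the odd generator (forcing a weight difference of $+2$) or from the odd generator to the even generator (forcing a weight difference of $-2$), but not both. Indecomposability rules out $x=0$ on both generators, so up to character twists the only possibilities are $\Pi P_{1|1}$ (first case) and $P_{1|1}$ (second case). By \cref{p_std}, $P_{1|1}$ is not spherical while $\Pi P_{1|1}$ is. Twisting $\Pi P_{1|1}$ by a character only adds a scalar multiple of $\id_V$ to the action of $h$, which is absorbed into the $\C\id_V$ summand in \cref{equiv_def}; hence every such twist is equivalent to $\Pi P_{1|1}$ itself.

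Finally, for the stabilizer: taking $v\in(\Pi P_{1|1})_{\ol{0}}$ to be the even generator of $h$-weight $-1$, we have $hv=-v$ and $xv$ equal to the odd generator, both nonzero and lying in distinct weight spaces, so no nonzero element of $\p(1)$ annihilates $v$. The main obstacle is the dimension-three step: one must combine the weight-space decomposition of $V_{\ol{0}}$ with the nilpotency $x^2=0$ to force the putative extension to split, since a naive dimension count alone does not rule out this configuration.
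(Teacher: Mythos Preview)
Your argument is correct and supplies exactly the details that the paper omits (the paper writes only ``The proof is straightforward and thus omitted''). The identification of the two Borels, the bound $\dim V\leq 3$ coming from $(\b^{op}+\C\id_V)v=\C v+\C hv+\C xv$, the splitting of a putative $(2|1)$-dimensional module via the weight decomposition together with $x^2=0$, and the dichotomy for $(1|1)$-dimensional indecomposables are all sound; the absorption of character twists into $\C\id_V$ is likewise correct.

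One small remark: the section on $\p(n)$ opens with the standing hypothesis $n\geq 2$, so your appeal to \cref{p_std} for $n=1$ is formally outside its stated range. This is harmless, since the direct verification is immediate (and you essentially carry it out when computing the stabilizer): in $P_{1|1}$ the unique even weight vector is annihilated by $x$, so no Borel can reach $V_{\ol{1}}$; in $\Pi P_{1|1}$ the even generator $v$ of weight $-1$ satisfies $xv\neq 0$, giving sphericity for $\b=\p(1)$ and trivial stabilizer.
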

\begin{proof}The proof is straightforward and thus omitted.\end{proof}

We now assume $n\geq 2$.

\begin{prop}\label{p_sym} The module $S^2P_{n|n}$ is indecomposable.  If $n>2$, it has simple socle $L_{\b^{st}}(-\epsilon_{n-1}-\epsilon_n)$ with a one-dimensional odd quotient.  The form $q$ induces an isomorphism $(S^2P_{n|n})^*\cong\Lambda^2V$, and we have $L(-\epsilon_{n-1}-\epsilon_n)^*\cong L(-2\epsilon_n)$.
	
	In particular, if $n>2$, $\dim^s L(-\epsilon_{n-1}-\epsilon_n)=\dim^s L(-2\epsilon_n)=(n^2|n^2-1)$, and neither module nor its parity shift is spherical.
\end{prop}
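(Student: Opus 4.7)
The plan is to exploit the $\p(n)$-invariant odd form $q\in(S^2V^*)_{\ol{1}}$, with $V=P_{n|n}$, which gives an equivariant surjection $q\colon S^2V\twoheadrightarrow\Pi\C$. Let $M=\ker q$. Under $\g_{\ol{0}}=\g\l(n)$ we have
\[
S^2V \;=\; S^2V_{\ol{0}}\oplus(V_{\ol{0}}\otimes V_{\ol{1}})\oplus\Lambda^2V_{\ol{1}} \;\cong\; L_0(2\epsilon_1)\oplus L_0(\epsilon_1-\epsilon_n)\oplus L_0(0)\oplus L_0(-\epsilon_{n-1}-\epsilon_n),
\]
with $q$ trivial on the two even summands (by parity) and equal to the trace pairing on $V_{\ol{0}}\otimes V_{\ol{1}}$, so it kills the adjoint component and maps the scalar component $L_0(0)$ isomorphically onto $\Pi\C$. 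Thus $M$ is the sum of the three non-scalar $\g_{\ol{0}}$-summands and has super-dimension $(n^2\mid n^2-1)$.

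Next I would locate the $\b^{st}$-highest weight of $S^2V$. The positive odd roots of $\b^{st}$ are $-\epsilon_i-\epsilon_j$, $i<j$, so a short check in the Bruhat order shows the highest weight of $V=P_{n|n}$ is the odd weight $-\epsilon_n$. Since an odd vector squares to zero in the super-symmetric algebra, the highest weight vector of $S^2V$ is the even vector $v^*:=v_{-\epsilon_{n-1}}\cdot v_{-\epsilon_n}\in\Lambda^2V_{\ol{1}}$, of weight $-\epsilon_{n-1}-\epsilon_n$, and it lies in $M$. To show $M\cong L(-\epsilon_{n-1}-\epsilon_n)$ for $n>2$, I would prove $M$ is irreducible by ruling out any further $\b^{st}$-primitive vector: inspection of weight spaces leaves only scalar multiples of $e_1^2\in S^2V_{\ol{0}}$ and $e_1\cdot f_n\in V_{\ol{0}}\otimes V_{\ol{1}}$ as candidates, and an explicit computation shows that both are moved off their own weight spaces by elements of $\g_{-1}$. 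This is the main obstacle.

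For indecomposability of $S^2V$ one must show $\Pi\C$ is not a submodule. The unique $\g_{\ol{0}}$-invariant in $(S^2V)_{\ol{1}}=V_{\ol{0}}\otimes V_{\ol{1}}$ is $\xi=\sum_i e_i\cdot f_i$, and a direct computation gives $C\cdot\xi=\sum_{i,j}C_{ji}\,f_j\cdot f_i\neq 0$ for nonzero antisymmetric $C\in\g_{-1}$, so $\xi$ is not $\p(n)$-invariant. The short exact sequence $0\to L(-\epsilon_{n-1}-\epsilon_n)\to S^2V\to\Pi\C\to 0$ is therefore non-split. For the dual statement, $q$ provides an odd iso $V\cong\Pi V^*$, and together with the natural identification $S^2(\Pi W)\cong\Lambda^2 W$ this yields $(S^2V)^*\cong\Lambda^2V$ as $\p(n)$-modules. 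Dualizing gives $0\to\Pi\C\to\Lambda^2V\to L(-\epsilon_{n-1}-\epsilon_n)^*\to 0$; the even vector $v_{-\epsilon_n}^2\in S^2V_{\ol{1}}\subset\Lambda^2V$ is a $\b^{st}$-highest weight vector of weight $-2\epsilon_n$, which identifies the quotient as $L(-2\epsilon_n)$.

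Finally, non-sphericity is a dimension count via \cref{ldim_res}. The largest odd dimension of a Borel of $\p(n)$ is $n(n+1)/2$, whereas $L(-\epsilon_{n-1}-\epsilon_n)$ and $L(-2\epsilon_n)$ have odd dimension $n^2-1$, and $n^2-1>n(n+1)/2$ precisely when $n>2$. The parity shifts have odd dimension $n^2>n(n+1)/2$ for all $n\ge 2$. Hence none of the four modules is numerically spherical, so none is spherical.
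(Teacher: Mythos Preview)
Your outline is essentially the paper's own route: the paper's proof just cites \cite{balagovic2016translation} (Lemma~2.1.2) or its own computation in \cref{App_sym} for the structure of $S^2P_{n|n}$, derives $(S^2V)^*\cong\Lambda^2V$ from $V^*\cong\Pi V$ exactly as you do, and finishes with the same dimension count against $\max_{\b}\dim\b_{\ol{1}}=n(n+1)/2$. Your indecomposability argument (the $\g_{\ol{0}}$-invariant $\xi$ is not $\g_{-1}$-killed) and your identification of the highest weight $-2\epsilon_n$ in $\Lambda^2V$ are also correct.

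There is, however, a genuine gap in your irreducibility step for $M=\ker q$. Ruling out further $\b^{st}$-primitive vectors shows only that $M$ has \emph{simple socle} $L(-\epsilon_{n-1}-\epsilon_n)$; since $\p(n)$-modules are not semisimple, a unique highest weight vector does not by itself force $M$ to be simple (think of a Kac-type module). You still need cyclicity, i.e.\ that $v^*=f_{n-1}f_n$ generates $M$. This is a two-line computation with $\g_1$-operators: $B_{1,n-1}\cdot v^*=e_1f_n$ lands in the adjoint component $L_0(\epsilon_1-\epsilon_n)$, and then $B_{1,n}\cdot(e_1f_n)=e_1^2$ lands in $L_0(2\epsilon_1)$, so $\UU\g\cdot v^*=M$. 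Together with your uniqueness of the primitive vector this gives irreducibility. (This is precisely the mechanism used in the paper's \cref{App_sym} computation, there carried out for the opposite Borel with $\g_1\subset\n^+$.) Note also that your identification $L(-\epsilon_{n-1}-\epsilon_n)^*\cong L(-2\epsilon_n)$ tacitly uses irreducibility of the quotient $\Lambda^2V/\Pi\C$, which is dual to the same claim, so it is not an independent check.
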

\begin{proof}
	For the statement about $S^2P_{n|n}$, see Lemma 2.1.2 of \cite{balagovic2016translation} or the end of \cref{App_sym}.
	
	We have $S^2P_{n|n}^*\cong S^2\Pi P_{n|n}\cong \Lambda^2P_{n|n}$.  We compute directly that the highest weight of $\Lambda^2P_{n|n}$ is $-2\epsilon_n$.
	
	Finally, the statement about dimensions is clear.  Because the maximum of $\dim\b_{\ol{1}}$ over all Borel subalgebras is $n(n+1)/2$, and we have $(n(n+1)/2)<n^2-1$ for $n>2$, the odd part of  $(\Pi)L(-\epsilon_{n-1}-\epsilon_n)$ or $(\Pi)L(-2\epsilon_n)$ will always be too large to be spherical.
\end{proof}

Now we assume $n\geq 3$.

\textbf{Even spherical dominant weights for $\g_{\ol{0}}\cong\g\l(n)$:}
\[
\epsilon_1,\ \ \ 2\epsilon_1,\ \ \ -\epsilon_n,\ \ \ -2\epsilon_n,\ \ \ \epsilon_1+\epsilon_2\ \ \ -\epsilon_{n-1}-\epsilon_n
\]

\textbf{Candidate even weights:} Same as above.

\

\textbf{Check for (numerical) sphericity of $L(\lambda)$:}
\begin{itemize}
	\item[$\bullet$]  The cases of $\lambda=-\epsilon_n,-2\epsilon_n,-\epsilon_{n-1}-\epsilon_n$ were dealt with in \cref{p_std} and \cref{p_sym}.
	
	\item[$\bullet$]  If $\lambda=\epsilon_1$, we compute $L(\lambda)^*\cong L(-n\epsilon_n-\omega)$.  Hence for $n\geq 4$ the even part cannot be spherical, while when $n=3$ the dimension is the same as that of $L(-2\epsilon_n)$, and it cannot be spherical.
	
	\item[$\bullet$]  If $\lambda=2\epsilon_1$, then $L(\lambda)^*\cong L(-(n+1)\epsilon_n-\omega)$, so the even part is never spherical for $n\geq3$.
	
	\item[$\bullet$] If $\lambda=\epsilon_1+\epsilon_2$, then $L(\lambda)^*=L((1-n)\epsilon_n-\omega)$.  Hence for $n\geq 5$ the even part is not spherical, while for $n=4$ the dimension is the same as that of $L(-2\epsilon_1)$, so it cannot be spherical.  If $n=3$, $\epsilon_1+\epsilon_2-\omega=-\epsilon_3$, so this falls under \cref{p_std}.
	
\end{itemize}

\

\textbf{Candidate odd highest weights:}
\[
-\epsilon_1,\ \ \ -2\epsilon_n,\ \ \ -\epsilon_{n-1}-\epsilon_n,\ \ \ -2\epsilon_{n-1}-2\epsilon_n (n=3)
\]
\[
-\epsilon_{n-2}-\epsilon_{n-1}-\epsilon_n\ (n\geq 4)\ \ \ -\epsilon_{n-3}-\epsilon_{n-2}-\epsilon_{n-1}-\epsilon_{n}\ (n\geq 5)
\]

\

\textbf{Check for (numerical) sphericity of $\Pi L(\lambda)$:}
\begin{itemize}
	\item The cases of $\lambda=-\epsilon_n,-2\epsilon,-\epsilon_{n-1}-\epsilon_n$ where dealt with in \cref{p_std} and \cref{p_sym}.
	
	\item If $\lambda=-2\epsilon_{n-1}-2\epsilon_n$ with $n=3$, this is equivalent to $\Pi L(2\epsilon_1)$, which was already dealt with.
	
	\item If $\lambda=-\epsilon_{n-2}-\epsilon_{n-1}-\epsilon_n$ with $n\geq 4$.  This is equivalent to the module $\Pi L(\epsilon_1+\dots+\epsilon_{n-3})$, whose dual is $\Pi L(-4\epsilon_n-\omega)$.  The odd part is then too large to be spherical.
	
	\item If $\lambda=-\epsilon_{n-3}-\epsilon_{n-2}-\epsilon_{n-1}-\epsilon_n$ with $n\geq 5$, this is equivalent to the module $\Pi L(\epsilon_1+\dots+\epsilon_{n-4})$, whose dual is $\Pi L(-5\epsilon_n-\omega)$.  Again the odd part is too large.
\end{itemize}

\subsection{$\p(2)$}

\textbf{Candidate even weights:}
\[
-\epsilon_2, \ \ \ \epsilon_1-\epsilon_2
\]

\textbf{Check for (numerical) sphericity of $L(\lambda)$:}
\begin{itemize} 
	\item The cases $L(-\epsilon_2),\Pi L(-\epsilon_2)$ were covered in \cref{p_std}.
	
	\item The module $L(\epsilon_1-\epsilon_2)$ is the socle of the adjoint representation, represented explicitly as the matrices
	\[
	\begin{bmatrix} A & B\\0 & -A^{t}\end{bmatrix}
	\]
	where $\tr(A)=0$ and $B^t=B$.  An explicit computation shows that neither this nor its parity shift is spherical. Hence neither $L(\epsilon_1-\epsilon_2)$ nor $\Pi L(\epsilon_1-\epsilon_2)$ is spherical.
\end{itemize}

\

\textbf{Candidate odd weights:} 
\[
-\epsilon_2,\ \ \ \epsilon_1-\epsilon_2
\]

\

\textbf{Check for (numerical) sphericity of $\Pi L(\lambda)$:} The case $\lambda=-\epsilon_2$ is covered in \cref{p_std}, and $\lambda=\epsilon_1-\epsilon_2$ was discussed in the even weight check above.

\

\textbf{(Numerically) spherical irreducibles for $\p(n)$, $n\geq 2$:} The only non-trivial spherical irreducible is $\Pi P_{n|n}$.  The stabilizer is $\k=\p(n-1)\ltimes\Pi L(-\epsilon_{n-1})$.  The numerically spherical irreducibles also include $\Pi\C$ and $P_{n|n}$.

\

\textbf{Spherical Indecomposables:}  For $\p(2)$, up to equivalence there is one non-trivial extension of one-dimensional modules, which is equivalent to the $\p(1)$-module $P_{1|1}$.  There are no extensions of $(\Pi)P_{2|2}$ by a one-dimensional module, and any extensions of $(\Pi)P_{2|2}$ by a twist of $(\Pi)P_{2|2}$ have too large an odd part to be spherical.  This deals with the case $n=2$.

There are no non-trivial extensions of 1-dimensional modules, or of two modules equivalent to $(\Pi)P_{n|n}$ for $\p(n)$ when $n\geq 3$.  Therefore we need to determine when there are extensions between $(\Pi)P_{n|n}$ and a one-dimensional module.  

The weights of $(\Pi)P_{n|n}$ are $\pm\epsilon_i$, while the weights of any one-dimensional module are multiples of $\omega$.  Because our odd roots are all of the form $\pm(\epsilon_i+\epsilon_j)$, the only time such an extension could exist is when $n=3$.  Further, the extension would need to appear in either the thin Kac module $\nabla(\omega)$ or the thick Kac module $\Delta(-\omega)$ by our weight restrictions  (see \cite{balagovic2016translation} for more on these modules).

We see that neither $P_{3|3}$ nor its parity shift appear in the thick Kac module
\[
\Delta(-\omega)=\UU\p(3)\otimes_{\UU(\p(3)_{-1}\oplus\p(3)_{0})}\C_{-\omega}
\]

On the other hand, the thin Kac module on $\C_{\omega}$, i.e. 
\[\nabla(\omega)=\UU\p(3)\otimes_{\UU(\p(3)_{0}\oplus\p(3)_{1})}\C_{\omega}
\]
provides us with a module with the following socle filtration:
\renewcommand{\arraystretch}{1.4}
\[
\nabla(\omega)=\begin{tabular}{|c|}
\hline 
$\C_{\omega}$\\
\hline
$\Pi P_{3|3}$\\
\hline
$\Pi\C_{-\omega}$\\
\hline
\end{tabular}.
\]
\renewcommand{\arraystretch}{1}
Since the space of extensions between any two simple modules for $\p(3)$ is always at most one-dimensional, the only extensions of $(\Pi)P_{3|3}$ by a one-dimensional module will appear in a subquotient of the above module or its parity shift.  Of those, the ones which are non-irreducible and spherical are $\nabla(\omega)$, $\operatorname{rad}\nabla(\omega)$, and $\nabla(\omega)/\operatorname{soc}\nabla\omega$.  We note that $\nabla(\omega)$ is in fact a restriction of $P_{4|4}$.  Information about Borels for which sphericity is achieved and stabilizers of spherical vectors is not very revealing, and thus is omitted.


\section{$\q(n)$ Case}\label{q_case}

Let $\g=\q(n)$.  For a more in-depth treatment of this algebra we refer the reader to \cite{cheng2012dualities}.

\textbf{Notation:} We present $\g$ as the subalgebra of $\g\l(n|n)$ consisting of matrices 
\[
\g=\left\{\begin{bmatrix}A & B\\B & A\end{bmatrix}:A,B\in\g\l(n)\right\}
\]
Let
\[
\h=\begin{bmatrix}D & D'\\D' & D\end{bmatrix}
\]
where $D,D'$ are arbitrary diagonal matrices.  

We write $Q_{n|n}$ for the standard module of $\q(n)$.

\textbf{Root system:} Let $\epsilon_1,\dots,\epsilon_n$ be the usual basis of $\h_{\ol{0}}^*$.  Write $\Delta$ for the non-zero weights of the adjoint action of $\h_{\ol{0}}$ on $\g$, i.e. the roots of $\g$.  We have
\[
\Delta=\{\epsilon_i-\epsilon_j\text{ for }1\leq i\neq j\leq n\},
\]
and all roots spaces are $(1|1)$-dimensional.

\textbf{Standard Borel:} Choose for positive system $\epsilon_1-\epsilon_2,\dots,\epsilon_{n-1}-\epsilon_n$.  Write $\b^{st}$ for the corresponding Borel subalgebra.  Note that for $\q(n)$ all Borel subalgebras are conjugate.  Notice also that $\dim\b_{\ol{1}}=n(n+1)/2<n^2$ for $n\geq 2$.   


\subsection{Spherical weights for $\g_{\ol{0}}$:}
	\[
\epsilon_1,\ \ \ 2\epsilon_1, \ \ \ -\epsilon_n, \ \ \ -2\epsilon_n
	\]
	\[
	\epsilon_1+\epsilon_2, \ \ \ -\epsilon_{n-1}-\epsilon_{n}
	\]
	
\subsection{$\q(1)$}

\begin{prop} For $\q(1)$, the non-trivial spherical indecomposable modules are $\Ind_{\q(1)_{\ol{0}}}^{\q(1)}\C_{t\epsilon_1}$, where $t\in\C$ is arbitrary.  For $t\neq 0$ these modules are all equivalent to $Q_{1|1}$, and the stabilizer of a spherical vector is trivial.  When $t=0$, $\Ind_{\q(1)_{\ol{0}}}^{\q(1)}\C_{0}$ is equivalent to $U^{1|1}$.
\end{prop}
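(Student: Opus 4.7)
The plan is to directly classify all finite-dimensional indecomposable $\q(1)$-modules in the $\g_{\ol{0}}$-semisimple category and then check sphericity of each. The structural setup is: $\q(1)_{\ol{0}} = \C I$ is central, $\q(1)_{\ol{1}} = \C X$, and $[X,X] = 2I$ forces $X^2 = I$ in the enveloping algebra; moreover the unique Borel is $\b = \q(1)$ itself. On any indecomposable module, $I$ acts by a single scalar $t \in \C$ because it is central and diagonalizable (by $\g_{\ol{0}}$-semisimplicity), and then $X^2 = t\cdot\id_V$ everywhere on $V$.

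I would first handle $t \neq 0$: here $X$ is invertible, so $X\colon V_{\ol{0}} \to V_{\ol{1}}$ is a bijection, and any one-dimensional subspace of $V_{\ol{0}}$ together with its $X$-image yields a $(1|1)$-dim submodule that splits off as a direct summand. Indecomposability thus forces $V$ to be $(1|1)$-dimensional, giving $V \cong \Ind_{\q(1)_{\ol{0}}}^{\q(1)} \C_{t\epsilon_1}$. Next I would handle $t = 0$: here $X$ is an odd operator with $X^2 = 0$, and a direct decomposition of $V$ into homology plus "cycles" (analogous to decomposing a two-term chain complex under an odd differential) identifies the indecomposable summands as exactly $\C$, $\Pi\C$, $U^{1|1}$, and $\Pi U^{1|1}$.

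Sphericity is then straightforward. For $v \in V_{\ol{0}}$ I would compute $(\rho(\q(1)) + \C\id_V)v = \C v + \C Iv + \C Xv$ and check whether it spans $V$. The trivial modules $\C,\Pi\C$ are excluded; $\Pi U^{1|1}$ fails because its even generator lies in the image of $X$ and hence is killed by both $I$ and $X$; whereas the even generator $v_0$ of $U^{1|1}$ or of $\Ind\C_{t\epsilon_1}$ with $t \neq 0$ satisfies $Xv_0 \neq 0$, giving $V = \C v_0 + \C X v_0$. The final step is to establish the equivalences: for $t \neq 0$, the image $\rho(\q(1)) + \C\id_V$ is spanned by $\id_V$ and an odd element $\rho(X)$ satisfying $\rho(X)^2 = t\cdot\id_V$, which matches the corresponding image for $Q_{1|1}$ after rescaling $\rho(X)$ by $1/\sqrt{t}$ via a diagonal super vector space isomorphism; triviality of the stabilizer then follows because $(aI + bX)v_0 = at\,v_0 + bXv_0$ vanishes only for $a = b = 0$. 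For $t = 0$, unfolding the definition of the induced module directly produces $U^{1|1}$.

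The main obstacle, mild though it is, lies in the $t=0$ case: one must carefully enumerate the four indecomposable types by parity and verify that $\Pi U^{1|1}$, despite being numerically admissible, fails to be spherical while $U^{1|1}$ succeeds. This parity bookkeeping is the only place where some care is required; everything else reduces to one-line linear algebra because of the small size of $\q(1)$.
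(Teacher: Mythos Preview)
Your argument is correct and complete. The paper itself omits the proof of this proposition entirely, so there is nothing to compare against; your direct classification of indecomposable $\q(1)$-modules via the relation $X^2=I$ in $\UU\q(1)$, followed by the case-by-case sphericity check and the explicit equivalences, is exactly the kind of routine verification the author had in mind.
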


\begin{proof}Omitted.\end{proof}

\subsection{$\q(n)$, $n\geq 3$}

\textbf{Candidate weights:} (all weights below are length one, hence satisfy $\Pi L(\lambda)\cong L(\lambda)$)
\[
\epsilon_1, \ \ \ 2\epsilon_1,\ \ \ -\epsilon_n,\ \ \ -2\epsilon_n 
\]

\textbf{Check for (numerical) sphericity of $L(\lambda)$:}  We have $L(\epsilon_1)\cong Q_{n|n}$ and $L(-\epsilon_n)\cong Q_{n|n}^*$, and each are spherical by a straightforward check- in fact, they are equivalent..  The module $L(2\epsilon_1)$ is $S^2(Q_{n|n})$, so has odd dimension $n^2$, which is too large.  Similarly $L(-2\epsilon_n)$ is $S^2(Q_{n|n}^*)$, so cannot be spherical.

\textbf{Numerical spherical irreducibles for $\q(n)$, $n\geq 3$:} Up to equivalence, the only non-trivial numerically spherical irreducible is $Q_{n|n}$ and it is spherical.  The stabilizer of a spherical vector is $\q(n-1)\ltimes(Q_{n-1|n-1})^*$.

\textbf{Spherical indecomposables for $\q(n)$, $n\geq 3$:} For $\q(n)$, there are no extensions of $\C$ by $\C$, however the surjective algebra homomorphism $\q(n)\to\C^{0|1}$ gives rise to a non-trivial extension of $\Pi\C$ by $\C$.  This module is spherical and equivalent to $U^{1|1}$.  

Since $\C$, $Q_{n|n}$, and $Q_{n|n}^*$ have distinct central characters, there are no extensions between them.  Any extension of $Q_{n|n}$ by itself cannot be spherical because its even part will not be spherical.  Hence we have found all spherical indecomposables of $\q(n)$ for $n\geq 3$.

\subsection{$\q(2)$}

\textbf{Candidate weights:}
\[
(t+1)\epsilon_1+t\epsilon_2,\ \ \ (t+2)\epsilon_1+t\epsilon_2,\text{  where }t\in\C
\]
We need to consider the irreducible representations with these given highest weights as well as their parity shifts when the length of the weight is two.  Note that the above weights are always typical except for $\frac{1}{2}\epsilon_1-\frac{1}{2}\epsilon_2$ and $\epsilon_1-\epsilon_2$.

\textbf{Check for (numerical) sphericity of $L(\lambda)$ and $\Pi L(\lambda)$:}
\begin{itemize}
	\item If $\lambda=\epsilon_1-\epsilon_2$ then up to parity shift, $L(\epsilon_1-\epsilon_2)\cong [\q(2),\q(2)]/\C I_{2|2}$, which is $(3|3)$ dimensional.  By a direct computation neither this nor its parity shift is spherical.
	
	\item Suppose $\lambda=\frac{1}{2}\epsilon_1-\frac{1}{2}\epsilon_2$. Then $L(\lambda)$ is isomorphic up to a parity shift to the representation coming from the map of algebras $\q(2)\to\p(2)$ which induces the following exact sequence:
	\[
	0\to\C I_{2|2}\to\q(2)\to[\p(2),\p(2)]\to0
	\]
	Therefore we can understand this representation, up to a parity shift, as the restriction of the action of $\p(2)$ on $P_{2|2}$ to its derived subalgebra.
	
	\item If $\lambda=(t+2)\epsilon_1+t\epsilon_2$ for $t\neq-1$, then the character formula for $L(\lambda)$ tells us that it will be $(4|4)$ dimensional, so neither it nor its parity shift can be spherical, having too large an odd part.
	
	\item Finally, suppose $\lambda=(t+1)\epsilon_1+t\epsilon_2$, with $t\neq -1/2$.  Then by direct computation both $L(\lambda)$ and $\Pi L(\lambda)$ are spherical (note that $L(\lambda)\cong\Pi L(\lambda)$ if and only if $t=0$ or $t=-1$).
\end{itemize}

\textbf{Spherical irreducibles for $\q(2)$:}  We compute $S^d(Rep^*)$ with respect to $(\b^{st})^{op}$.  This leads to a canonical identification $L_{\b^{st}}(\lambda)^*\cong L_{(\b^{st})^{op}}(-\lambda)$ when the length of $\lambda$ is $2$.

\renewcommand{\arraystretch}{1.5}
\[\begin{tabular}{ |c|c|c|c| } 
\hline 
Rep & $\dim^s$ & Stabilizer \\
\hline
$L((t+1)\epsilon_1+t\epsilon_2)$, $t\neq -1/2$& $(2|2)$ & $\q(1)\ltimes \Ind_{\q(1)_{\ol{0}}}^{\q(1)}\C_{2t+1}$ \\
\hline 
$\Pi L((t+1)\epsilon_1+t\epsilon_2)$, $t\neq -1/2$ & $(2|2)$ & $\q(1)\ltimes \Ind_{\q(1)_{\ol{0}}}^{\q(1)}\C_{2t+1}$ \\
\hline 
$\text{Res}_{[\p(2),\p(2)]}P_{2|2}$ & $(2|2)$ & $\q(1)\ltimes\Pi(\Ind_{\q(1)_{\ol{0}}}^{\q(1)}\C_0)$\\
\hline
\end{tabular}
\]
\renewcommand{\arraystretch}{1}

\

\textbf{Spherical indecomposables for $\q(2)$:}  Again, the quotient map $\q(2)\to\C^{0|1}$ gives rise to a module equivalent to $U^{1|1}$.  The modules $(\Pi)L((t+1)\epsilon_1+t\epsilon_2)$ for $\lambda\neq 0,-1/2,,-1$ are projective hence have no extensions with other modules.  For $\lambda=-1/2$, it has the same central character as the trivial module but its weights prevent any extensions between them.  Finally, $Q_{2|2}$ and $Q_{2|2}^*$ and the trivial module again have distinct central characters, so we get new spherical indecomposable representations.

\section{Computations of Symmetric Powers}\label{App_sym}

We begin by stating
\begin{prop}
	Let $U^{1|1}$ be the representation of the one-dimensional odd abelian algebra as described in the introduction.  Then $S^d(U^{1|1})^*\cong \Pi U^{1|1}$ for all $d$.
\end{prop}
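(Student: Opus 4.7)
The plan is to prove this by a direct computation, pinning down the $\g$-module structure on both sides and exhibiting an explicit intertwiner. Since $\g_{\ol{1}}=\C\langle X\rangle$ is one-dimensional and $X^2=0$ trivially, describing a $\g$-module amounts to describing the action of the single odd operator $X$ on a $\Z_2$-graded vector space.

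First I would fix a homogeneous basis $\{v,w\}$ of $U^{1|1}$ with $v$ even, $w$ odd, so that by definition $Xv=w$ and $Xw=0$. Taking the dual basis $\{v^*,w^*\}$ (with $v^*$ even and $w^*$ odd) and applying the usual sign convention $(X\phi)(u)=-(-1)^{\ol{X}\,\ol{\phi}}\phi(Xu)$ for the dual action, one computes $Xv^*=0$ and $Xw^*=v^*$. The space $S^d(U^{1|1})^*$ has basis $(v^*)^d$ (even) and $(v^*)^{d-1}w^*$ (odd), and is therefore $(1|1)$-dimensional for every $d\ge 1$. Applying the super Leibniz rule $X(ab)=(Xa)b+(-1)^{\ol{X}\,\ol{a}}a(Xb)$ gives $X\cdot(v^*)^d=0$ and $X\cdot(v^*)^{d-1}w^*=(v^*)^d$.

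Next I would compare with $\Pi U^{1|1}$, whose even part is spanned by $w$ and odd part by $v$, and on which $X$ still sends $v\mapsto w$ and $w\mapsto 0$ (now with reversed parities). Define $\phi:S^d(U^{1|1})^*\to\Pi U^{1|1}$ by $(v^*)^d\mapsto w$ and $(v^*)^{d-1}w^*\mapsto v$. This is parity-preserving by construction, and the computation in the previous paragraph shows $\phi(X\cdot\alpha)=X\cdot\phi(\alpha)$ on both basis vectors, so $\phi$ is a $\g$-module isomorphism.

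There is essentially no obstacle here; the only thing one needs to be careful about is keeping the sign conventions for the dual action and for the Leibniz rule straight, but since everything is one-dimensional in each parity no signs ever obstruct the conclusion. The statement really just reflects the fact that up to isomorphism there are only two indecomposable modules over $\g=\C\langle X\rangle$ with $X^2=0$, namely the trivial one-dimensional modules $\C^{1|0},\C^{0|1}$ and the two-dimensional modules $U^{1|1}$ and $\Pi U^{1|1}$, and the computation above identifies which one $S^d(U^{1|1})^*$ must be.
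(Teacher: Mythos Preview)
Your proof is correct. The paper omits its proof entirely, so there is nothing to compare; your direct computation is exactly the kind of argument one would expect here, and all the sign bookkeeping checks out. One small quibble: the statement implicitly means $d\geq 1$ (for $d=0$ the left side is the trivial even module), and your closing remark lists four modules while saying ``only two'', but neither affects the argument.
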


\begin{proof}
	Omitted.
\end{proof}

\subsection{$\g\l(m|n)$:}  The computations of $S^d(V^*)$ for a spherical irreducible of $\g\l(m|n)$ from \cref{gl_std_reps} use Schur-Weyl duality as in \cite{howe1995perspectives}.  Most computations were also discussed in \cite{sahi2018capelli}.  We do not rewrite them here.

\subsection{$\g\l(1|2)$:}

First we compute $S^d(K(t\epsilon_1)^*)$ for $d\geq 1$ as a $\g\l(1|2)$-module.  We have that $K(t\epsilon_1)^*\cong K((2-t)\epsilon_1-\delta_1-\delta_2)$.  Then with respect to $\b^{\delta\delta\epsilon}$, the highest weights functions in $S^d(K(t\epsilon_1)^*)$ have weight 
\[
\lambda_{i,j}=(2i-(i+j)t)\epsilon_1-i(\delta_1+\delta_2)
\]
where $i+j=d$, $i\geq 0$, and $j>0$.  The weight $\lambda_{i,j}$ is then atypical if and only if 
\[
t=\frac{i}{i+j}=\frac{i}{d}\ \ \text{ or }\ \ t=\frac{i+1}{i+j}=\frac{i+1}{d}
\]
This can only happen if $t\in\Q\cap[0,1]$.  Therefore, if $t\notin\Q\cap[0,1]$, we have
\begin{align}\label{t_generic}
S^d(K(t\epsilon_1)^*)=\bigoplus\limits_{\makecell{i+j=d\\ i\geq 0, j>0}}L_{\b^{\delta\delta\epsilon}}(\lambda_{i,j})
\end{align}
Now suppose that $t=\frac{m}{n}$ with $m,n\in\Z$, $0\leq m\leq n$, and $(m,n)=1$.  Then if $n$ does not divide $d$, we get the decomposition as in \ref{t_generic}.  

If $n$ divides $d$, we may write $d=nk$, where $k>0$.  If $i+j=d$, then $\lambda_{i,j}$ is atypical if and only if $i=mk$ or $i=mk-1$.  If $t\neq 0,1$ then $S^d((K(\frac{m}{n}\epsilon_1)^*)$ is projective, since it is a summand of the projective module $(K(\frac{m}{n}\epsilon_1)^*)^{\otimes d}$.  Therefore in this case we find that
\[
S^{nk}(K(\frac{m}{n}\epsilon_1)^*)=P(mk\Ber)\oplus\bigoplus\limits_{\makecell{i+j=nk\\ 0\leq i<nk, i\neq mk,mk+1}}K(\lambda_{i,j})
\]
where $P(mk\Ber)$ is the projective cover of the one-dimensional even module of weight $mk\Ber$.

If $t=0$ then $m=0$ and $n=1$.  The weight $\lambda_{i,j}=2i\epsilon_1-i(\delta_1+\delta_2)$ is atypical only when $i=0$, $j=k$.  Therefore we get
\[
S^{k}(K(0)^*)=K_{1|2}(0)^*\oplus\bigoplus\limits_{\makecell{i+j=k\\ 0<i<k}}L_{\b^{\delta\delta\epsilon}}(\lambda_{i,j})
\]
The first summand is obtained by using that that $K(0)^*$ has an even $\g$-invariant vector, so multiplication by powers of it define injective homomorphisms $K(0)^*\to S^{k}(K(0)^*)$ for all $k$.

Finally, if $t=1$ then $m=n=1$.  The weight $\lambda_{i,j}=(i-j)\epsilon_1-i(\delta_1+\delta_2)$ is atypical if and only if $j=1, i=k-1$.  Therefore we get
\[
S^{k}(K(\epsilon_1)^*)=(K_{1|2}(\epsilon_1)^*)_{(k-1)Ber}\oplus\bigoplus\limits_{\makecell{i+j=k\\ 0\leq i<k-1}}L_{\b^{\delta\delta\epsilon}}(\lambda_{i,j})
\]
The first summand can be obtained by observing we have an even $\g$ semi-invariant derivation on functions coming from the even semi-invariant element of $K(\epsilon_1)$.  Powers of it define surjective $\g$ semi-equivariant homomorphisms $S^k(K(\epsilon_1)^*)\to K(\epsilon_1)^*$ for all $k$.

\

\subsection{$S^d(\Pi K(t\epsilon_1)^*)$, $t\neq 0$ and $d>0$}  Here, with respect to the $\delta\epsilon\delta$-Borel, the highest weight is $(1-t)\epsilon_1-\delta_2$, and the $d$th power of the highest weight vector will be a highest weight vector of weight
\[
\mu_d=d(1-t)\epsilon_1-d\delta_2
\]
When $t\neq 1$, these weight are typical.  It follows that
\[
S^d(\Pi K(t\epsilon_1)^*)\cong L_{\b^{\delta\epsilon\delta}}(\mu_d)
\]
When $t=1$, $\Pi K(t\epsilon_1)$ has socle $\Pi\C_{-\Ber}$.  Therefore, there is an odd $\g$ semi-invariant derivation on functions, which defines non-zero $\g$ semi-equivariant homomorphisms $S^d(\Pi K(t\epsilon_1)^*)\to S^{d-1}(\Pi K(t\epsilon_1)^*)$ such that the composition of two is zero.  Since $S^d(\Pi K(t\epsilon_1)^*)$ also contains a unique highest weight vector, of weight $\mu_d=-d\delta_2$, the socle must be $L_{\b^{\delta\epsilon\delta}}(-d\delta_2)$, and the derivation must vanish on the socle.  By the structure of projectives for an atypical block of $\g\l(1|2)$-modules, it follows we must have
\[
S^d(\Pi K(t\epsilon_1)^*)\cong K_{1|2}(\epsilon_1-\delta_1-d\delta_2)
\]

\subsection{$\o\s\p(m|2n)$}  

Recall that $\Lambda^d(OSP_{m|2n})$ is irreducible for all $d$ as a $\o\s\p(m|2n)$-module.  Since
\[
S^d(\Pi OSP_{m|2n})\cong \Pi^d\Lambda^d(OSP_{m|2n}),
\]
all symmetric powers of $\Pi OSP_{m|2n}$ are irreducible.  

\

The computation of $S^dV^*$ for the $\o\s\p(2|4)$-module $V=L(\delta_1+\delta_2-\epsilon_1)$ is given in \cite{sahi2018capelli}.

\subsection{Computation of $S^\bullet(OSP_{m|2n})$}\label{super_sphere_comp}

Write $V=OSP_{m|2n}$ for the standard representation of $\g=\o\s\p(m|2n)$, and assume $m\geq2$ so that $V$ is spherical for $\g\o\s\p(m|2n)$.  For each $d\geq 0$ we find the structure of $S^d V$ as a $\g$-module.  This is related to the notion of skew-symmetric harmonic polynomials, and the proof uses ideas from the classical theory of harmonic polynomials as functions on the sphere.  For more on the classical story of harmonic polynomials, see Chapter III of \cite{helgason1984groups}.  This question was studied in the real case in \cite{zhang2008orthosymplectic} and further in \cite{coulembier2012orthosymplectic} in the context of supersymmetric harmonic analysis.  Many of the results we present are present in those two papers, although the methods used are different.  

\subsection{Setup} Let $(-,-)\in S^2V^*$ be a non-degenerate $\g$-invariant supersymmetric form on $V$.  Then we have an induced isomorphism of $\g$-modules $V\cong V^*$ and a corresponding dual element to the form, $\omega\in S^2V$.  The form $(-,-)$ gives rise to a non-degenerate, supersymmetric $\g$-invariant form on each symmetric power $S^dV$, which we also denote by $(-,-)$.  Let $\Omega\in\End(S^\bullet V)$ be the adjoint to left multiplication by $\omega$, i.e.
\[
(\Omega x,y)=(x,\omega y)
\]
for all $x,y\in S^\bullet V$. This is the Laplacian operator on functions.  Since multiplication by $\omega$ is injective, $\Omega$ is surjective.  Let $H=[\Omega,-\omega]$.  Then for each $d$ we have $\g$-module endomorphisms
\[
H:S^dV\to S^dV, \ \ \ \Omega:S^{d}V\to S^{d-2}V, \ \ \ L_\omega:S^{d}V\to S^{d+2}V
\]
where we denote left multiplication by $\omega$ as $L_\omega$.  Further, these three endomorphisms form an $\s\l_2$-triple:
\[
[H,\Omega]=2\Omega,\ \ \ [H,-\omega]=2\omega.
\]
Therefore we have an action of $\s\l_2\times\g$ on $S^\bullet V$.  The operator $H$ takes the specific form
\[
H=(n-r)-\EE
\]
where $r:=m/2$ and $\EE$ is the Euler vector field, i.e. the operator which acts as scalar multiplication by $d$ on $S^dV$.

\subsection{$\s\l_2\times\g_{\ol{0}}$ structure:} By the theory of harmonic polynomials on $\C^m$, we have as an $\s\l_2\times\o(m)$-module
\[
S^\bullet V_{\ol{0}}=\bigoplus\limits_{\ell\geq0}M(-r-\ell)\boxtimes H_{\ell}^{ev},
\]
where $H_{\ell}^{ev}$ is the irreducible $\s\o(m)$-module of harmonic polynomials of degree $\ell$, and we write $M(s)$ for the $\s\l_2$ Verma module of highest weight $s$.  Here, we have
\[
M(-r-\ell)_{-r-\ell}\otimes H_{\ell}^{ev}\sub S^\ell V_{\ol{0}}.
\]
By the theory of skew-symmetric harmonic polynomials on $\C^{2n}$, we have as an $\s\l_2\times\s\p(2n)$-module
\[
\Lambda^\bullet V_{\ol{1}}=\bigoplus\limits_{0\leq j\leq n}L(n-j)\boxtimes W_j
\]
where $W_j$ is the $j$th fundamental representation of $\s\p(2n)$ for $j\geq 1$ and $W_0$ is the trivial representation, and we write $L(s)$ for the irreducible $\s\l_2$-module of highest weight $s$.  Here, we have
\[
L(n-j)_{n-j}\otimes W_j\sub \Lambda^jV_{\ol{1}}
\]
Hence we have, as an $\s\l_2\times\s\o(m)\times\s\p(2n)$-module,
\[
S^\bullet V=S^\bullet V_{\ol{0}}\otimes \Lambda^\bullet V_{\ol{1}}=\bigoplus\limits_{\makecell{\ell\geq 0\\0\leq j\leq n}}(M(-r-\ell)\otimes L(n-j))\boxtimes H_\ell^{ev}\boxtimes W_j.
\]

\begin{lemma}\label{g_0_struc}
	As a $\g_{\ol{0}}$-module, $S^dV/L_{\omega}S^{d-2}V$ is multiplicity-free, self-dual, and every irreducible summand is isomorphic to a module of the form $H_i^{ev}\boxtimes W_j$, where $0\leq j\leq n$.
\end{lemma}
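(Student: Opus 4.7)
The plan is to read off the $\g_{\ol{0}}$-structure of $S^dV/L_\omega S^{d-2}V$ directly from the $\s\l_2\times\g_{\ol{0}}$-decomposition of $S^\bullet V$ given above. Since $L_\omega$ is (up to sign) the lowering generator $F$ of the $\s\l_2$-triple $(\Omega,H,-L_\omega)$ and commutes with $\g_{\ol{0}}$, the quotient is precisely the $H$-weight $n-r-d$ graded piece of $\mathrm{coker}(L_\omega)$ distributed over the $\s\l_2\times\g_{\ol{0}}$-summands. The problem thus reduces to determining, for each pair $(\ell,j)$ with $\ell\geq 0$ and $0\leq j\leq n$, the cokernel of $L_\omega$ at $H$-weight $n-r-d$ on the $\s\l_2$-factor $M(-r-\ell)\otimes L(n-j)$.

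To carry this out I would invoke the standard category-$\OO$ fact that $M(s)\otimes L(t)$ admits a Verma flag with subquotients $M(s+t),M(s+t-2),\ldots,M(s-t)$. Being $\s\l_2$-equivariant, $L_\omega$ preserves this flag, and because it acts injectively on every Verma, an iterated snake-lemma argument shows both that $L_\omega$ is injective on $M(s)\otimes L(t)$ and that its cokernel is additive across the Verma subquotients. On a single Verma $M(s')$ the cokernel of $L_\omega$ is one-dimensional at weight $s'$ and vanishes at all lower weights, so the cokernel at weight $n-r-d$ on $M(-r-\ell)\otimes L(n-j)$ is one-dimensional precisely when $d-\ell-j$ is a non-negative even integer bounded above by $2(n-j)$, and zero otherwise.

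Summing across $(\ell,j)$ then yields
\[
S^dV/L_\omega S^{d-2}V\;\cong\;\bigoplus_{(\ell,j)}H_\ell^{ev}\boxtimes W_j,
\]
with the sum ranging over pairs satisfying the above range and parity constraints. Distinct such pairs index pairwise non-isomorphic irreducibles of $\s\o(m)\times\s\p(2n)$, giving multiplicity-freeness; and self-duality is immediate from the fact that every $H_\ell^{ev}$ (of highest weight $\ell\epsilon_1$) and every fundamental $W_j$ of $\s\p(2n)$ is self-dual as a module over its respective factor. The delicate point is that the Verma flag on $M(s)\otimes L(t)$ need not split as an $\s\l_2$-module---for instance $M(-1)\otimes L(1)$ is already a non-trivial extension of $M(-2)$ by $M(0)$---so one cannot simply decompose the tensor product into a direct sum of Vermas. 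The snake lemma sidesteps this obstruction: only the graded pieces of the Verma flag matter for computing the cokernel of $L_\omega$, not any choice of splitting.
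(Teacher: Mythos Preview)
Your argument is correct and takes a genuinely different route from the paper. The paper works directly with the explicit $\g_{\ol{0}}$-decomposition of $S^dV$: writing $\omega=r_0+\omega_0$ with $r_0\in S^2V_{\ol{0}}$ and $\omega_0\in\Lambda^2V_{\ol{1}}$, it indexes each $\g_{\ol{0}}$-summand as $r_0^\ell H_s^{ev}\otimes\omega_0^k W_t$ and then shows, by an explicit induction on $|k-q|$, that any two isomorphic summands are identified modulo $L_\omega S^{d-2}V$ (multiplying $r_0^\ell H_s^{ev}\otimes\omega_0^{k-1}W_t\subset S^{d-2}V$ by $r_0+\omega_0$ trades a power of $\omega_0$ for a power of $r_0$). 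Your approach instead stays at the level of the $\s\l_2$-factor, using the Verma filtration on $M(-r-\ell)\otimes L(n-j)$ together with the snake lemma to compute $\coker(L_\omega)$ weight by weight. Your method is cleaner and yields the explicit list of summands with their parity and range constraints as a by-product, while the paper's argument is more elementary and self-contained, requiring no facts from category $\OO$. The two are secretly the same computation---the paper's induction on $|k-q|$ is exactly the hands-on version of your snake-lemma step, with the splitting $\omega=r_0+\omega_0$ playing the role of the tensor decomposition $M(-r-\ell)\otimes L(n-j)$---but your packaging is more conceptual.
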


\begin{proof}
	The irreducible factors of $S^dV$ are all isomorphic to a module of the form $H_i^{ev}\boxtimes W_j$ for some $0\leq j\leq n$, and these are all self-dual $\g_{\ol{0}}$-modules.  So it remains to prove $S^dV/L_{\omega}S^{d-2}V$ is multiplicity-free.
	
	By our decomposition as an $\s\l_2\times \g_{\ol{0}}$-module, we can write
	\[
	S^dV=\bigoplus\limits_{i+j=d}\left(\bigoplus\limits_{0\leq \ell\leq\lfloor \frac{i}{2}\rfloor}r_0^\ell H_{i-2\ell}^{ev}\right)\otimes\left(\bigoplus\limits_{\max(0,j-n)\leq k\leq\lfloor\frac{j}{2}\rfloor}\omega_0^{k}W_{j-2k}\right)
	\]
	where $\omega=r_0+\omega_0$, $r_0\in S^2V_{\ol{0}}$, $\omega_0\in\Lambda^2V_{\ol{1}}$.  The extra lower bound condition on $k$ comes from the finite-dimensional structure of the corresponding $\s\l_2$-module.  It follows that each summand can be written uniquely as $r_0^\ell H_{s}^{ev}\otimes \omega_0^{k}W_t$, with $2\ell+s+2k+t=d$.
	
	Suppose another summand isomorphic to this one shows up, e.g. $r_0^{p}H_{s}^{ev}\otimes\omega_0^{q}W_t$, with $2p+s+2q+t=d$.  Then we must have $q\neq k$.  Without loss of generality suppose $q<k$.  Then $r_0^{\ell}H_s^{ev}\otimes \omega_0^{k-1}W_t$ will be a $\g_{\ol{0}}$ summand of $S^{d-2}V$.  Multiplying it by $\omega=r_0+\omega_0$, we learn that modulo $L_{\omega}S^{d-2}V$, $r_0^{\ell}H_s^{ev}\otimes\omega_0^{k}W_t$ is identified with $r_0^{\ell+1}H_s^{ev}\otimes\omega_0^{k-1}W_t$.  
	
	By induction on $|k-q|$, we can identify $r_0^\ell H_{s}^{ev}\otimes \omega_0^{k}W_t$ with $r_0^{p}H_{s}^{ev}\otimes\omega_0^{q}W_t$ modulo $L_{\omega}S^{d-2}V$. This proves the quotient is multiplicity-free.
\end{proof}

\begin{cor}
	As a $\g$-module, $S^dV/L_{\omega}S^{d-2}V$ is multiplicity-free and each composition factor is self-dual.
\end{cor}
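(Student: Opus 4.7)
The plan is to handle multiplicity-freeness and self-duality separately, using \cref{g_0_struc} for the $\g_{\ol{0}}$-picture. First, the multiplicity-freeness of $M:=S^dV/L_\omega S^{d-2}V$ as a $\g$-module is a direct consequence of \cref{g_0_struc}: restriction to $\g_{\ol{0}}$ is exact, so any $\g$-composition series of $M$ induces a $\g_{\ol{0}}$-filtration whose successive quotients together carry all $\g_{\ol{0}}$-composition factors of $M$. By the lemma these are multiplicity-free, and since every $\g$-irreducible contributes a nonempty set of $\g_{\ol{0}}$-composition factors, no $\g$-irreducible can occur twice in $M$.

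For self-duality of composition factors, I would argue by strong induction on $d$; the convenient statement to induct on is that every composition factor of $S^dV$ is self-dual (the corollary then follows for $M$, whose composition factors form a sub-multiset of those of $S^dV$). The base cases $d\in\{0,1\}$ are immediate. The $\g$-invariant nondegenerate supersymmetric form on $V$ extends to each $S^dV$, giving $S^dV\cong(S^dV)^*$. Next, $L_\omega:S^{d-2}V\to S^dV$ is injective: the $\s\l_2$-decomposition preceding \cref{g_0_struc} expresses $S^\bullet V$ as a direct sum of $\s\l_2$-Verma summands (applying Clebsch--Gordan to each $M(-r-\ell)\otimes L(n-j)$), on each of which $L_\omega$ acts as the standard lowering operator, which has trivial kernel on every weight space regardless of whether the Verma is reducible. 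The short exact sequence
\[
0\to S^{d-2}V\xrightarrow{L_\omega}S^dV\to M\to 0
\]
then gives $[M:L]=[S^dV:L]-[S^{d-2}V:L]$ for every simple $\g$-module $L$. Self-duality of $S^dV$ and the inductive hypothesis applied to $S^{d-2}V$ yield $[S^dV:L]=[S^dV:L^*]$ and $[S^{d-2}V:L]=[S^{d-2}V:L^*]$ respectively, so $[M:L]=[M:L^*]$ for every simple $\g$-module $L$.

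To finish, suppose a composition factor $L$ of $M$ satisfied $L\not\cong L^*$. Both then appear with multiplicity one, and by \cref{g_0_struc} each $\g_{\ol{0}}$-composition factor of $M$ is self-dual, giving $L|_{\g_{\ol{0}}}\cong L^*|_{\g_{\ol{0}}}$ as $\g_{\ol{0}}$-modules. Each $\g_{\ol{0}}$-composition factor of $L$ would then occur at least twice in $M|_{\g_{\ol{0}}}$, contradicting multiplicity-freeness; hence $L\cong L^*$, closing the induction. The main delicate point will be confirming injectivity of $L_\omega$ in the reducible-Verma cases (arising for $m$ even and $n-j\geq m/2+\ell$), but it follows immediately from the structure of $\s\l_2$-Vermas, where the lowering operator never vanishes on any weight space.
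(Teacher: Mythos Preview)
Your argument is correct, but it takes a longer route than the paper intends. The paper states the corollary without proof because it is meant to be immediate from \cref{g_0_struc} together with highest weight theory for the basic superalgebra $\g=\o\s\p(m|2n)$: if $L$ is any $\g$-composition factor of $M=S^dV/L_\omega S^{d-2}V$, then every $\g_{\ol{0}}$-constituent of $L$ is self-dual (being a subquotient of $M$), so $L|_{\g_{\ol{0}}}\cong L^*|_{\g_{\ol{0}}}$ as graded $\g_{\ol{0}}$-modules; since for basic $\g$ with $\h=\h_{\ol{0}}$ an irreducible is determined by its highest $\h_{\ol{0}}$-weight and the parity of that weight space, both of which are read off from the $\g_{\ol{0}}$-restriction, this forces $L\cong L^*$ directly. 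No induction, no injectivity of $L_\omega$, and no comparison of $[M:L]$ with $[M:L^*]$ are needed.

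Your approach, by contrast, avoids invoking the fact that $\g$-irreducibles are determined by their $\g_{\ol{0}}$-restriction, trading it for the inductive identity $[M:L]=[M:L^*]$ and then a pigeonhole contradiction against $\g_{\ol{0}}$-multiplicity-freeness. This is a legitimate and slightly more robust argument (it would work for any $\g$ whose $\g_{\ol{0}}$ is reductive, without appealing to highest weight theory for $\g$), but here it is overkill. Note also that the injectivity of $L_\omega$, which you carefully justify via the $\s\l_2$-Verma structure, is used later in the paper anyway (see the discussion around \eqref{C[G/K]}), so your detour through it is not wrong, just unnecessary at this point.
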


\

\subsection{Frobenius Reciprocity} If we consider the action of the supergroup $G=OSP(m|2n)$ on $V$ as a supervariety, the stabilizer of an even vector of length $1$ will be $K=OSP(m-1|2n)$, which gives rise to a closed embedding of $G/K$ into $V$.  In fact we get an identification
\[
G/K\cong \Spec\ S^\bullet V/(1-\omega)
\]
Frobenius reciprocity tells us, in this case, that for an integrable $\g$-module $W$ we have
\begin{align}\label{supersphere}
\Hom_{G}(W,\C[G/K])\cong\Hom_{K}(W,\C)=(W^*)^K
\end{align}
This will be heavily used in what follows.  In particular, we observe that for any $d\in\Z_{\geq0}$, we have a natural injective map:
\[
S^dV\hookrightarrow S^\bullet V/(1-\omega)=\C[G/K],
\]
and because $L_{\omega}$ is injective, we have an isomorphism of $\g$-modules
\begin{align}\label{C[G/K]}
\C[G/K]\cong\colim\limits_{d\to\infty}S^{2d}V\oplus\colim\limits_{d\to\infty}S^{2d+1}V
\end{align}
\

\subsection{$\s\l_2$-module structure:}Let $I=\{n-j,n-j-2,\dots,j-n\}$.  By Prop. 3.12 of \cite{enright1979fundamental}, we have:

\begin{align}\label{enright}
M(-r-\ell)\otimes L(n-j)& = &M\oplus \bigoplus\limits_{\substack{t\in I\\-(r+\ell)+t\in\Z_{\geq 0}\\(r+\ell-t)-2+r+\ell\in I}}P(-(r+\ell)+t)
\end{align}
where for $k\geq 0$ we denote by $P(k)$ the big projective in the block of category $\OO$ for $\s\l_2$ containing $L(k)$, and $M$ is a direct sum of Verma modules.  The structure of $P(k)$ is such that the highest weight is $k$, and the endomorphism 
\[
\Omega L_{\omega}:P(k)_j\to P(k)_j
\]
is an isomorphism if $j\neq -k$, and is the zero map when $j=-k$.   

\begin{cor}\label{sscase}
	If $r$ is a half integer or $r>n$, then as an $\s\l_2$-module $S^\bullet V$ is a direct sum of irreducible Verma modules.  In particular, it is semisimple.
\end{cor}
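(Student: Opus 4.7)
The plan is to apply the Enright--Varadarajan formula \eqref{enright} term-by-term to the $\s\l_2\times\g_{\ol{0}}$-decomposition of $S^\bullet V$ recorded just above, show that the projective summands are forced to be absent under either hypothesis, and verify that the surviving Verma modules are irreducible.

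Concretely, I would start from the decomposition
\[
S^\bullet V = \bigoplus_{\ell\geq 0,\, 0\leq j\leq n}\bigl(M(-r-\ell)\otimes L(n-j)\bigr)\boxtimes H_\ell^{ev}\boxtimes W_j,
\]
so that as an $\s\l_2$-module, $S^\bullet V$ is just a direct sum of copies of the tensor products $M(-r-\ell)\otimes L(n-j)$ with multiplicity $\dim H_\ell^{ev}\cdot\dim W_j$. By \eqref{enright}, each such tensor product equals a direct sum of Verma modules plus copies of $P(-(r+\ell)+t)$ indexed by those $t\in I=\{n-j,n-j-2,\ldots,j-n\}$ which satisfy $-(r+\ell)+t\in\Z_{\geq 0}$ (together with a second condition that I will not need to invoke).

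The key observation is that this integrality-plus-positivity condition fails for every admissible $\ell$, $j$, $t$ under either hypothesis on $r$. If $r\in\tfrac{1}{2}+\Z$, then $r+\ell\in\tfrac{1}{2}+\Z$ for every $\ell\in\Z_{\geq 0}$, so $-(r+\ell)+t\notin\Z$ for any $t\in I\subset\Z$, hence is certainly not in $\Z_{\geq 0}$. If instead $r>n$, then for every $t\in I$ we have $t\leq n-j\leq n<r\leq r+\ell$, so $-(r+\ell)+t<0$. In either case no projective summands appear in \eqref{enright} and $M(-r-\ell)\otimes L(n-j)$ is a direct sum of Verma modules.

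It remains to see that every surviving Verma module $M(-r-\ell+t)$ is irreducible, which for $\s\l_2$ amounts to its highest weight not lying in $\Z_{\geq 0}$. Under the first hypothesis the highest weight lies in $\tfrac{1}{2}+\Z$, which is disjoint from $\Z_{\geq 0}$; under the second it is strictly negative by the same estimate as above. Semisimplicity then follows since an irreducible Verma module for $\s\l_2$ is a simple object. The only remotely technical step is parsing the indexing conditions of \eqref{enright}; everything else is an arithmetic check together with the standard criterion for irreducibility of an $\s\l_2$ Verma module.
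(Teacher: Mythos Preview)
Your proof is correct and follows essentially the same approach as the paper: both argue that the hypotheses force $-(r+\ell)+t\notin\Z_{\geq 0}$ for all relevant $t$, so that \eqref{enright} yields only Verma modules, and then observe that these Verma modules have half-integer or strictly negative highest weight and are therefore irreducible. Your version is simply a more explicit unpacking of the paper's two-sentence proof.
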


\begin{proof}
	Our conditions imply that $-(r+\ell)+t\notin\Z_{\geq 0}$ for any integer $t\leq n$.  By \ref{enright}, this implies $M(-r-\ell)\otimes L(n-j)$ is a direct sum of Verma modules of either negative or half-integer highest weight. 
\end{proof}

\textbf{Notation:} Write $H_d:=\ker(\Omega:S^dV\to S^{d-2}V)$ for the space of `harmonic superpolynomials'.  Note that since $\Omega$ is never injective, $H_d\neq 0$ for all $d\geq 0$.

\begin{cor}\label{non-ss-case}
	If $n-r\in\Z_{\geq 0}$, we have 
	\[
	S^\bullet V=M\oplus \bigoplus\limits_{d\leq n-r} P(n-r-d)\otimes H_d
	\]
	where $M$ is a direct sum of Verma modules of negative highest weight.
\end{cor}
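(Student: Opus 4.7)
The plan parallels the proof of Corollary~\ref{sscase}, now accounting for the reducible Verma summands which appear when $n-r\in\Z_{\geq 0}$. I would apply Enright's decomposition~\eqref{enright} tensor-factor by tensor-factor within the $\s\l_2\times\g_{\ol{0}}$-decomposition
\[
S^\bullet V=\bigoplus_{\ell,j}\bigl(M(-r-\ell)\otimes L(n-j)\bigr)\boxtimes H_\ell^{ev}\boxtimes W_j
\]
recalled above, and then regroup the resulting summands by their $\s\l_2$-highest weight. Because the $\s\l_2$- and $\g$-actions on $S^\bullet V$ commute (the elements $\omega$ and $(-,-)$ are both $\g$-invariant), every $\s\l_2$-isotypic component is automatically $\g$-stable, and the multiplicity spaces inherit a canonical $\g$-module structure.

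After this substitution, $S^\bullet V$ is exhibited as a direct sum of $\s\l_2$-Verma modules — some irreducible (from the ``$M$'' parts of \eqref{enright}) and some of the form $P(k)=M(k)$ with $k\geq 0$ (reducible Vermas in the paper's convention) — each tensored with a $\g_{\ol{0}}$-isotypic factor. I would collect all summands of $\s\l_2$-highest weight $s$ into a single term $M(s)\otimes U_s$, and then identify $U_s\cong H_{n-r-s}$ as a $\g$-module. This identification is transparent: because $H$ acts on $S^dV$ by the scalar $n-r-d$ (via $H=(n-r)-\EE$), an $\s\l_2$-highest weight vector of weight $s$ lies in degree $d=n-r-s$ and is characterized by $\Omega v=0$, which places it in $H_{n-r-s}$ by definition. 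Each Verma summand contributes a one-dimensional highest weight space, giving a bijection, and $\g$-equivariance of $\Omega$ makes it $\g$-linear.

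To finish, I would split the summands by the sign of $s$. For $s\geq 0$, writing $d=n-r-s\in\{0,\ldots,n-r\}$, the Verma $M(s)$ is by convention the big projective $P(n-r-d)$, contributing $P(n-r-d)\otimes H_d$. For $s<0$, the Verma $M(s)=L(s)$ is irreducible, and these summands collectively form the module $M$ in the statement. The main obstacle, though essentially bookkeeping, is checking that the data emerging from Enright's formula truly reassembles as the $H_d$'s with their full $\g$-module structure; but this reduces to the conceptual fact that $H_d$ is precisely the $\s\l_2$-highest weight space of $S^\bullet V$ at weight $n-r-d$, which is immediate from the formula for $H$ and from $\Omega$ being the raising operator.
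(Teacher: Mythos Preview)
Your proposal rests on a misreading of the paper's notation: you assert that ``$P(k)=M(k)$ with $k\geq 0$ (reducible Vermas in the paper's convention)'', but the paper defines $P(k)$ to be the \emph{big projective} in the block containing $L(k)$, i.e.\ the projective cover of $L(-k-2)$, which has Loewy layers $L(-k-2),\,L(k),\,L(-k-2)$ and is strictly larger than $M(k)$.  With the correct meaning of $P(k)$, the corollary is not a bookkeeping statement: its content is precisely that every indecomposable $\s\l_2$-summand of $S^\bullet V$ with non-negative highest weight is a big projective rather than a reducible Verma.  Equivalently, one must check that whenever $-(r+\ell)+t\geq 0$ for $t\in I$, the paired weight $(r+\ell-t)-2+(r+\ell)$ also lies in $I$, so that the two Verma factors $M(-(r+\ell)+t)$ and $M((r+\ell)-t-2)$ glue into a $P$ rather than splitting off separately.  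This inequality verification is exactly what the paper's proof carries out, and it is entirely absent from your plan.

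Your identification ``$U_s\cong H_{n-r-s}$ via highest weight vectors'' also breaks down once $P(k)$ is understood correctly.  The big projective $P(k)$ has \emph{two} highest weight vectors for $\Omega$: one of weight $k$ and one of weight $-k-2$ (the latter generates the socle $L(-k-2)$).  Hence for $s<0$ the harmonic space $H_{n-r-s}$ receives contributions both from Verma summands $M(s)$ and from big-projective summands $P(-s-2)$, so it does not coincide with the multiplicity space of $M(s)$.  For $s\geq 0$ the identification would be correct \emph{provided} there are no $M(s)$ summands, but establishing that is again the missing inequality.
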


\begin{proof}
	By \ref{enright}, it suffices to prove that if $-(r+\ell)+t\geq 0$ for $t\in I=\{n-j,n-j-2,\dots,j-n\}$, then 
	\[
	(r+\ell-t)-2+r+\ell\in I 
	\]
	or, equivalently
	\[
	-(n-j)\leq 2(r+\ell)-2-t\leq n-j.
	\]
	These inequalities follow from the following two inequalities:
	\[
	-(n-j)\leq (r+\ell)-t-1< 0, \ \ \ \ 0\leq r+\ell-1\leq n-j
	\]
	where we are using that $r\geq 1$.
\end{proof}

We now break down our analysis of $S^\bullet V$ into two cases: that when it is a semisimple $\s\l_2$-module, i.e. $n-r\notin\Z_{\geq 0}$, and that when it is not, i.e. $n-r\in\Z_{\geq 0}$.

\subsection{Semisimple Case}  We now suppose that either $r$ is a half integer or $r>n$.  Then by \cref{sscase}, we get that $\Omega L_{\omega}:S^dV\to S^dV$ is an isomorphism for all $d$, and therefore we have
\[
S^dV=H_d\oplus L_{\omega} H_{d-2}\oplus L_{\omega}^2 H_{d-4}\oplus\cdots.
\]

\textbf{Claim}: $H_d$ is irreducible for all $d\geq 0$.

To see this, first observe that
\[
S^dV=H_d\oplus L_{\omega} S^{d-2}V
\]
and therefore by \cref{g_0_struc}, all composition factors of $H_d$ have multiplicity one and are self-dual.  But we also observe by $\s\l_2$-semisimplicity that the form on $S^dV$ is non-degenerate when restricted to $L_{\omega} S^{d-2}V$, and therefore the form must also be non-degenerate when restricted to the complement $H_d$.  Therefore $H_d$ itself is self-dual as a $\g$-module. Because it is also multiplicity-free, by a standard argument this implies $H_d$ is completely reducible.

To show that $H_d$ is actually irreducible, observe that we have shown
\[
\C[G/K]=S^\bullet V/(1-\omega)\cong\bigoplus\limits_{d\geq 0}H_d,
\]
and that $H_d$ is completely reducible.  By \eqref{supersphere} $H_d$ is irreducible if and only if 
\[
(H_d^*)^K\cong H_d^K=1
\]
As a $K$-module, we have $V=V'\oplus\C$, where $V'$ is the standard $K$-module, and $\C$ is the one-dimensional even trivial module.  Therefore, we get the $K$-module decomposition
\[
S^d(V)=S^dV'\oplus S^{d-1}V'\oplus\cdots 
\] 
By Cor 5.3 of \cite{lehrer2017invariants}, the dimension of the space of $K$-invariants in $S^aV'$ is $1$ if $a$ is even, $0$ if $a$ is odd (where we use here that necessarily $m>2$), and hence $\dim S^d(V)^K=\lfloor\frac{d}{2}\rfloor+1$.  On the other hand,
\[
S^d(V)\cong H_d\oplus H_{d-2}\oplus\cdots
\]
Since we must have $\dim H_j^K\geq 1$ for each $j\geq 0$, we obtain that $\dim H_d=1$.  Hence $H_d$ is irreducible.

\subsection{Non-semisimple case: $n-r\in\Z_{\geq 0}$}

\begin{lemma}The map
	\[
	\Omega L_{\omega}:S^dV\to S^dV
	\]
	is an isomorphism if and only if $d<n-r$ or $d>2(n-r)$.
	
	If $n-r+2\leq d\leq 2(n-r)+2$, write $s=d-(n-r)-1$.  Then
	\[
	\ker(\Omega L_{\omega}:S^{d-2}V\to S^{d-2}V)=L_{\omega}^{s-1}H_{d-2s} 
	\]
	In particular $L_{\omega}^{s}H_{d-2s}\sub H_{d}$.
\end{lemma}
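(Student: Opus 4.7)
The plan is to analyze $\Omega L_\omega$ summand-by-summand using the $\s\l_2$-decomposition from \cref{non-ss-case}, namely $S^\bullet V = M \oplus \bigoplus_{d' \le n-r} P(n-r-d') \otimes H_{d'}$, where $M$ is a sum of Verma modules of negative highest weight. The key identification is that $S^dV$ is exactly the $((n-r)-d)$-weight space for $H = (n-r) - \EE$, since $H$ acts by the scalar $(n-r)-d$ on $S^dV$.

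First, on each Verma $M(k)$ with $k<0$ appearing in $M$, I would compute that $\Omega L_\omega$ acts on the weight $k-2a$ space by the scalar $-(a+1)(k-a)$, which is nonzero for all $a\ge 0$ since $k<0$; thus $\Omega L_\omega$ is already an isomorphism on the $M$-summand of every $S^dV$. On each projective summand $P(n-r-d')$ I would invoke the stated structure: $\Omega L_\omega$ is invertible on every weight space except that of weight $-(n-r-d')$, where it vanishes. That bad weight corresponds to degree $d = 2(n-r)-d'$, so as $d'$ ranges over $\{0,1,\ldots,n-r\}$, $\Omega L_\omega$ fails to be an isomorphism on $S^dV$ exactly for $d\in\{n-r,n-r+1,\ldots,2(n-r)\}$, which is the first claim.

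For the kernel description, note that when $n-r+2\le d\le 2(n-r)+2$ the parameter $s = d-(n-r)-1$ lies in $\{1,\ldots,n-r+1\}$, and solving $d'=2(n-r)-(d-2)$ yields $d' = d-2s$. Hence the kernel of $\Omega L_\omega$ on $S^{d-2}V$ sits in the single summand $P(n-r-d')\otimes H_{d-2s}$, and by the $P(k)$-structure it is precisely the one-dimensional $-(n-r-d')$-weight space of $P(n-r-d')$ tensored with $H_{d-2s}$. Under the $\s\l_2$-triple $(\Omega,-L_\omega,H)$, this weight space in $P(n-r-d')$ is spanned by $(-L_\omega)^{n-r-d'}$ applied to the highest weight generator, and a direct index check gives $n-r-d' = s-1$, so the kernel is identified (up to sign) with $L_\omega^{s-1}H_{d-2s}$. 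The final containment $L_\omega^{s}H_{d-2s}\subseteq H_d$ is then immediate: if $y\in\ker(\Omega L_\omega)$ on $S^{d-2}V$, then $\Omega(L_\omega y)=0$, so $L_\omega y$ is harmonic by definition of $H_d$.

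The main obstacle I anticipate is purely bookkeeping: keeping straight which direction $L_\omega$ moves weights (down, since $\omega$ has $H$-weight $-2$), fixing signs in the $\s\l_2$-triple, and verifying that the indices $d'=d-2s$ and $n-r-d'=s-1$ line up. One point worth verifying carefully is that the Vermas comprising $M$ really do have strictly negative integer highest weight, so that the scalar $-(a+1)(k-a)$ is never zero; this is built into the proof of \cref{non-ss-case}, where exactly the Vermas with non-negative highest weight were reassembled into the projectives $P(n-r-d')$.
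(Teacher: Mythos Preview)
Your proposal is correct and follows exactly the approach the paper takes: the paper's proof is a single sentence invoking \cref{non-ss-case} together with the structure of the $\s\l_2$-modules $P(k)$, and you have simply (and accurately) unpacked that sentence. Your index bookkeeping ($d'=d-2s$, $n-r-d'=s-1$, and the identification of the one-dimensional weight $-(s-1)$ space of $P(s-1)$ with $L_\omega^{s-1}$ applied to its highest weight line) is correct, and your observation that the Vermas in $M$ contribute nothing to the kernel because their highest weights are strictly negative is exactly the point needed.
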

\begin{proof}
	This follows from \cref{non-ss-case} and the structure of the $\s\l_2$-modules $P(k)$ for $k\geq 0$.
\end{proof}

It follows that we have, for $0\leq d\leq n-r+1$,
\[
S^dV=H_d\oplus L_\omega H_{d-2}\oplus\cdots .
\]
Further, by following the same proof as in the semisimple case we can again show that each such $H_d$ for $0\leq d\leq n-r+1$ is a simple $\g$-module.  

Now suppose $n-r+2\leq d\leq 2(n-r)+2$.  Using the $\s\l_2$ structure, we may write 
\[
S^{d-2}V=L_{\omega}^{s-1}H_{d-2s}\oplus W_{d-2}
\]
for some complementary $\g$-submodule $W_{d-2}$ with $\Omega L_{\omega}:W_{d-2}\to W_{d-2}$ an isomorphism.  Hence we may write
\[
S^dV=A_{d}\oplus L_{\omega}W_{d-2}
\]
where $A_{d}$ is a $\g$-module complement to $L_{\omega}W_{d-2}$.  In particular, $H_d\sub A_d$.

\

For $d>2(n-r)+2$, $\Omega L_\omega:S^{d-2}V\to S^{d-2}V$ is an isomorphism, so $H_d$ splits off from $S^dV$, and we get a decomposition
\[
S^dV=\bigoplus\limits_{i=0}^{t}L_{\omega}^{i}H_{d-2i}\oplus \bigoplus\limits_{j=t+1}^{\lfloor\frac{d-(n-r)-1}{2}\rfloor} L_{\omega}^jA_{d-2j}
\]
where 
\[
t=\left\lfloor\frac{d-2(n-r)-3}{2}\right\rfloor, \ \ \ \ A_{n-r+1}:=H_{n-r+1}
\] 
Again using Cor. 5.3 of \cite{lehrer2017invariants} and arguments as before, one can show that $H_d$ is irreducible for $d>2(n-r)+2$ when $r\geq 2$, while $H_d$ is the sum of two irreducibles with highest weights $d\epsilon_1$ and $(2n-d)\epsilon_1$ with respect to $\b^{st}$ for $r=1$.  It therefore remains to understand the structure of $A_d$.

\subsection{Structure of $A_d$} We now assume $(n-r)+2\leq d\leq 2(n-r)+2$.  Recall our decomposition
\[
S^dV=A_d\oplus L_{\omega} W_{d-2}
\]
Since the form will be non-degenerate when restricted to $L_{\omega}W_{d-2}$, it will also be non-degenerate on $A_{d}$, and therefore $A_d$ is self-dual.  Further, we observe that $\im L_{\omega}\cap A_{d}=L_{\omega}^{s}H_{d-2s}$, and in fact we have
\[
S^dV/L_{\omega} S^{d-2}V\cong A_{d}/L_{\omega}^{s}H_{d-2s},
\]
so $A_{d}/L_{\omega}^{s}H_{d-2s}$ is multiplicity-free.

By construction, $L_{\omega}^{s}H_{d-2s}\sub H_d\sub A_{d}$.  We get short exact sequences
\[
0\to H_d\to A_d\xto{\Omega}L_{\omega}^{s-1}H_{d-2s}\to0
\]
\[
0\to L_{\omega}^{s}H_{d-2s}\to H_d\to Q_d\to 0
\]
where we have defined $Q_d$ as the quotient $H_d/L_{\omega}^{s}H_{d-2s}$.

Using self-duality of $A_{d}$ we find that $Q_d$ is self-dual, and since $Q_d$ is a submodule of $A_{d}/L_{\omega}^{s}H_{d-2s}$ we get that it is multiplicity-free and each composition factor is self-dual.  Therefore it must be completely reducible.  

Again, by Cor 5.3 of \cite{lehrer2017invariants}, we learn that the $(A_d)^K$ is two-dimensional (even for the case $r=1$), and since $A_d$ is self-dual we have by \ref{supersphere} 
\[
\dim\Hom(A_d,\C[G/K])=2.
\]
Two such linearly independent maps are  
\[
\phi:A_d\sub S^dV\hookrightarrow S^{\bullet} V/(1-\omega) \ \text{ and } \ \psi:A_d\xto{\Omega}L_{\omega}^{s-1}H_{d-2s}\sub S^{d-2}V\hookrightarrow S^\bullet V/(1-\omega)
\]
\

\textbf{Claim:} $A_{d}$ is indecomposable, with irreducible head and socle isomorphic to $H_{d-2s}$.

\

To prove the claim, first notice that the map $A_d\xto{\Omega}L_{\omega}^{s-1}H_{d-2s}$ cannot split, for otherwise $H_{d-2s}^{\oplus 2}$ would be a submodule of $S^dV$, which would contradict \cref{cmult_free}.

Now suppose that $A_d$ split, i.e. we have $A_d=M\oplus N$ for two non-trivial submodules $M$ and $N$, and write $p_M,p_N$ for the projections onto $M$ and $N$ respectively.  Then $\phi\circ p_M$, $\phi\circ p_N$ and $\psi$ would be three linearly independent maps $A_d\to\C[G/K]$, a contradiction.  Therefore $A_d$ is indecomposable.

The fact that $\text{soc}(A_d)\cong H_{d-2s}$ follows from the fact that $Q_d$ is multiplicity-free and each summand is self-dual.  Since $A_d$ is self-dual, the head must also be isomorphic to $H_{d-2s}$.  We now have the following picture of $A_d$, with its socle filtration illustrated:

%
%
\renewcommand{\arraystretch}{1.4}
\[
A_d=\begin{tabular}{ |c| }
\hline
$H_{d-2s}$\\
\hline
$Q_d$\\
\hline
$H_{d-2s}$\\
\hline 
\end{tabular}
\]
\renewcommand{\arraystretch}{1}
It remains to understand $Q_d$.  We split our analysis into the cases of $r\geq 2$ and $r=1$.  

\textbf{Case when $r\geq 2$.}  By our description of
\[
S^dV/L_{\omega}S^{d-2}V\cong A_d/L_\omega^{s}H_{d-2s}
\]
as a $\g_{\ol{0}}$-module in \cref{g_0_struc}, we see that the $\g_{\ol{0}}$-dominant weights with respect to the standard even Borel of $\g_{\ol{0}}$ are all of the form $t\epsilon_1+\delta_1+\dots+\delta_i$ for some $t\geq 0$ and some $i$.  

If we choose the Borel corresponding to the $\epsilon\delta$-sequence $\delta\cdots\delta\epsilon\cdots\epsilon$, we see that the only such weights which are $\g$-dominant with respect to this Borel are ones of the form $\delta_1+\dots+\delta_n+t\epsilon_1$ for some $t\geq 0$.  When we change via odd reflections to the Borel with $\epsilon\delta$ sequence $\epsilon\cdots\epsilon\delta\cdots\delta$, this highest weight becomes $(t+n)\epsilon_1$.  Call this latter Borel $\b$.  

We learn therefore, that with respect to the Borel $\b$, the irreducibles which can appear in $Q_d$ must all have highest weight $t\epsilon_1$ for some $t\geq 0$.

Since $A_d$ is indecomposable, any irreducible factors which show up in it must have the same central character.  The module $H_{d-2s}$ has highest weight $(d-2s)\epsilon_1$ with respect to $\b$, and the only other highest weight of the form $t\epsilon_1$ with the same central character is $d\epsilon_1$.  It follows that either $Q_d=L_{\b}(d\epsilon_1)$ or $Q_d=0$.  But again, if $Q_d=0$ then $A_d$ would give a non-trivial extension of $H_{d-2s}$ by itself which does not exist. This gives the structure of $A_d$ when $r\geq 2$:

\renewcommand{\arraystretch}{1.4}
\[
A_d=\begin{tabular}{ |c| }
\hline
$L_{\b}((d-2s)\epsilon_1)$\\
\hline
$L_{\b}(d\epsilon_1)$\\
\hline
$L_{\b}((d-2s)\epsilon_1)$\\
\hline 
\end{tabular}
\]
\renewcommand{\arraystretch}{1}

\textbf{Case when $r=1$}.  Notice that our decomposition of $S^dV$ for $d$ large implies, by \ref{C[G/K]},
\[
\C[G/K]=\bigoplus\limits_{i=n-r+1}^{2(n-r)+2}A_d\oplus \bigoplus\limits_{d>2(n-r)+2}H_d
\]
so $A_d$ is a direct summand of $\C[G/K]$. On the other hand, $\C[G/K]=\Ind_{K}^{G}\C$, where here $K=OSP_{1|2n}$.  Since the category of finite-dimensional representations of $K$ is semi-simple, by Frobenius reciprocity we obtain that $\C[G/K]$ must be a direct sum of injective $\g$-modules.  

In particular, $A_d$ must be itself be a sum of injective modules.  Because we have shown it is indecomposable with socle $L_{\b^{st}}((d-2s)\epsilon_1)$ with respect to the standard Borel $\b^{st}$ of $\o\s\p(2|2n)$, it must be the injective hull of this irreducible module.  The socle filtration of this module is:

\renewcommand{\arraystretch}{1.4}
\[
A_d=\begin{tabular}{ |c| }
\hline
$L_{\b^{st}}((d-2s)\epsilon_1)$\\
\hline
$L_{\b^{st}}(d\epsilon_1)\oplus L_{\b^{st}}(-\epsilon_1+\delta_1+\dots+\delta_{d-2s+1})$\\
\hline
$L_{\b^{st}}((d-2s)\epsilon_1)$\\
\hline 
\end{tabular}
\]
\renewcommand{\arraystretch}{1}

\subsection{Computation of $S^\bullet P_{n|n}$}\label{peri_sym_power}

In this section we compute the algebra of functions on $\Pi P_{n|n}$ as a $\p(n)$-module.

Let $V=P_{n|n}$.  We choose for Borel $\b=\begin{bmatrix} A& B\\0 & -A^{t}\end{bmatrix}$, where $A$ is upper triangular and $B$ is symmetric.  Recall that $\Pi V$ is spherical with respect to $\b$.  Therefore, we will compute highest weights of $S^\bullet(\Pi V^*)$ with respect to $\b$.

We have $\Pi V^*\cong V\cong L_{\b}(\epsilon_1)$, so $S^d(\Pi V^*)\cong S^d(V)$.  Our non-degenerate odd $\p(n)$-invariant form, which we view as an odd $\p(n)$-module homomorphism
\[
q:S^2V\to\C,
\]
gives odd $\p(n)$-module homomorphisms 
\[
q:S^dV\to S^{d-2}V   
\]
for all $d\geq 2$.  We also have
\[
(S^d V)^*\cong S^dV^*\cong S^d\Pi V\cong\Pi^d\Lambda^dV
\]
and hence we have the element $c\in\Pi^2\Lambda^2V^*=(S^2V)^*$ which is adjoint to $q$, i.e. for $x\in S^dV$, $y\in\Pi^{d-2}\Lambda^{d-2}V$, we have
\[
(q(x),y)=(-1)^{|x|}(x,cy)
\]
where $(-,-)$ denotes the pairing of dual vector spaces.  Since $c$ is odd, $c^2=0$, so we get that $q^2=0$.  

As a $\g_{\ol{0}}$-module we have the decomposition 
\[
S^dV=\bigoplus\limits_{\makecell{i+j=d\\ j\leq n}}S^iL_0(\epsilon_1)\otimes\Lambda^jL_0(-\epsilon_n)
\]
By Pieri's rule, we get
\[
S^iL_0(\epsilon_1)\otimes\Lambda^jL_0(-\epsilon_n)=L_{0}(i\epsilon_1-\epsilon_{n-j+1}-\dots-\epsilon_n)\oplus L_{0}((i-1)\epsilon_1-\epsilon_{n-j+2}-\dots-\epsilon_n)
\]
when $1\leq j<n$, and if $j=n$ we only get the first factor.  

For $j\neq n$, as a $\g_{\ol{0}}$-module the kernel of $q$ on $S^iV_{\ol{0}}\otimes\Lambda^jV_{\ol{1}}^*$ will be
\[
L_{0}(i\epsilon_1-\epsilon_{n-j+1}-\dots-\epsilon_n)
\]
For $j=n$, the kernel of $q$ on $S^iL_0(\epsilon_1)\otimes\Lambda^jL_0(-\epsilon_n)$ will be everything if $d=n$ (equivalently $i=0$), or the kernel will be trivial if $d>n$ (equivalently $i>0$).  Hence, as a $\g_{\ol{0}}$-module, the kernel of $q$ on $S^dV$ will be
\[
\bigoplus\limits_{0\leq i<\min(d,n)} L_{0}((d-i)\epsilon_1-\epsilon_{n-i+1}-\dots-\epsilon_n).
\]
for $d\neq n$, and for $d=n$ we get
\[
\bigoplus\limits_{0\leq i\leq n} L_{0}((n-i)\epsilon_1-\epsilon_{n-i+1}-\dots-\epsilon_n).
\]
We can write $S^\bullet V=\C[x_1,\dots,x_n,\xi_1,\dots,\xi_n]$ where $x_i$ has weight $\epsilon_i$, $\xi_i$ weight $-\epsilon_i$.  Then $x_1$ is our highest weight vector, and so $x_1^d$ will be a highest weight vector for all $d\geq 0$.  As a $\g_{\ol{0}}$-module, $x_1^d$ generates $S^dV_{\ol{0}}$.  In $\g_{\ol{1}}$ we have all differential operators $\xi_i\d_{x_j}-\xi_j\d_{x_i}$, for $i<j$.  Applying to $x_1^d$, sequentially, 
\[
\xi_n\d_{x_1}-\xi_1\d_{x_n},\ \ \xi_{n-1}\d_{x_1}-\xi_1\d_{x_{n-1}},\ \ \dots,\ \ \xi_{2}\d_{x_1}-\xi_1\d_{x_2}
\]
we get, up to scalar,
\[
\xi_nx_1^{d-1},\ \ \xi_{n-1}\xi_nx_1^{d-2},\ \ \dots,\ \ \xi_2\cdots\xi_nx_1^{d-n+1}
\]
these are exactly the generators of the irreducible $\g_{\ol{0}}$-summands of the kernel of $q$ for $d\neq n$.  For $d=n$, we also get the one-dimensional $\g_{\ol{0}}$-module generated by
\[
\xi_1\cdots\xi_n.
\]
However, this element is not in $\UU\g\cdot x_1^n$; indeed, $x_1^n$ is in the $\g$-submodule given by the image of $q:S^{n+2}V\to S^nV$, but $\xi_1\cdots\xi_n$ is not in the image of $q$.  Further, by applying operators $x_i\d_{\xi_i}$ from the $\p(n)$-action we can get back to $S^dV_{\ol{0}}$ from any of the above elements $\xi_{i}\cdots\xi_n x_1^{d-n+i}$.  It follows from the above analysis that $\ker q\cong L_{\b}(d\epsilon_1)$ for $d\neq n$.  If $d=n$, $\ker q$ is a nontrivial extension of $L_{\b}(-\omega)$ by $L_{\b}(n\epsilon_1)$.  

Now we claim that $S^0(V)=\C$, $S^1V=V$, and for $d\geq 2$ we have the following socle filtrations:

\renewcommand{\arraystretch}{1.4}
\[
S^dV=\begin{tabular}{|c|}
\hline 
$\Pi L((d-2)\epsilon_1)$\\
\hline
$L(d\epsilon_1)$\\
\hline
\end{tabular},
\]
unless $d=n$, in which case we get
\[
S^nV=\begin{tabular}{|c|}
\hline 
$\Pi L((n-2)\epsilon_1)$\\
\hline
$\Pi^nL(-\omega)$\\
\hline
$L(n\epsilon_1)$\\
\hline
\end{tabular}.
\]
\renewcommand{\arraystretch}{1}

Further, in each of the above cases, the radical of the module is equal to $\ker q$.

Proof of claim: we have already computed the kernel of $q$ in each case and have shown it is indeed the radical of each module shown above.  Further, our analysis shows that as a $\g_{\ol{0}}$-module, we get an isomorphism (for $d\geq 2$)
\[
q:S^dV/\ker q\to L_{\b}((d-2)\epsilon_1)
\]
It follows that this must be an isomorphism of $\g$-modules.  To finish the proof, we need to show the socle filtration is as advertised.  We see that $q(x_1\xi_1\cdots\xi_{n-1})\neq$, and further
\[
(\xi_n\d_{x_1}-\xi_1\d_{x_n})(x_1\xi_1\cdots\xi_{n-1})=(-1)^{n-1}\xi_1\cdots\xi_n.
\]
This completes the proof.

\subsection{Symmetric algebras for $\q(n)$-modules}

By Schur-Sergeev duality (see \cite{cheng2012dualities} chapter 3), we have that $S^m(Q_{n|n}^*)$ is irreducible of highest weight $-m\epsilon_n$ with respect to the standard Borel for all $m,n$.

For the family of typical spherical modules for $\q(2)$, we compute the symmetric algebras of the dual with respect to the Borel $(\b^{st})^{op}$.  This leads to a canonical identification $L_{\b^{st}}(\lambda)^*\cong L_{(\b^{st})^{op}}(-\lambda)$ when the length of $\lambda$ is $2$.  We then have for $t\neq -1/2$, by the character formula for typical $\q(2)$-modules, 
\[
S^d(L((t+1)\epsilon_1+t\epsilon_2)^*)\cong \Pi^dL_{(\b^{st})^{op}}(-d(t+1)\epsilon_1-dt\epsilon_2)
\]
and
\[
S^d(\Pi L((t+1)\epsilon_1+t\epsilon_2)^*)\cong L_{(\b^{st})^{op}}(-d(t+1)\epsilon_1-dt\epsilon_2)
\]

\section{Appendix A: Root Systems and Borels for $\g\l(m|n)$ and $\o\s\p(m|2n)$}\label{App_borels}

Here we give a description of the root system and weight space notation for $\g\l(m|n)$ and $\o\s\p(m|2n)$.  We also describe our notation for certain Borel subalgebras.

\

Let $\g$ be either $\g\l(m|n)$, $\o\s\p(2m|2n)$, or $\o\s\p(2m+1|2n)$ and let $\h_{\ol{0}}\sub\g$ be the Cartan subalgebra of diagonal matrices.  Write $(-,-)$ for the restriction of a fixed invariant form to $\h_{\ol{0}}$.  Then there is basis of $\h_{\ol{0}}^*$ given by $\epsilon_1,\dots,\epsilon_m,\delta_{1},\cdots,\delta_{n}$, where
\[
(\epsilon_i,\epsilon_j)=-(\delta_{i},\delta_j)=\delta_{ij},\ \ \  (\epsilon_i,\delta_j)=0.
\]
The root system for $\g\l(m|n)$ has even and odd components
\[
\Delta_{\ol{0}}=\{\epsilon_i-\epsilon_j,\delta_k-\delta_l\}_{i\neq j,k\neq l}, \ \ \ \Delta_{\ol{1}}=\{\pm(\epsilon_i-\delta_j)\},
\]
the root system for $\o\s\p(2m|2n)$ has even and odd components
\[
\Delta_{\ol{0}}=\{\pm\epsilon_i\pm\epsilon_j,\pm\delta_k\pm\delta_l,\pm2\delta_k\}_{i\neq j,k\neq l}, \ \ \ \Delta_{\ol{1}}=\{\pm\epsilon_i\pm\delta_j\},
\]
and the root system for $\o\s\p(2m+1|2n)$ has even and odd components
\[
\Delta_{\ol{0}}=\{\pm\epsilon_i\pm\epsilon_j,\pm\epsilon_i,\pm\delta_k\pm\delta_l,\pm2\delta_k\}_{i\neq j,k\neq l}, \ \ \ \Delta_{\ol{1}}=\{\pm\epsilon_i\pm\delta_j,\pm\delta_j\}.
\]
Let $W$ denote the Weyl group of the even part of each root system above.  Then the conjugacy classes of Borel subalgebras of $\g$ are in bijection with a choice of simple roots up to the $W$-action.  For each algebra, there is a well-known classification of such conjugacy classes in terms of $\epsilon\delta$-sequences (see for instance \cite{cheng2012dualities}, section 1.3).  When discussing which (conjugacy classes of) Borels a representation is spherical with respect to for one of these algebras, we will speak in terms of $\epsilon\delta$-sequences.

For example, for $\g\l(1|2)$ there are three conjugacy classes of Borels, corresponding to the sets of simple roots $\{\epsilon_1-\delta_1,\delta_1-\delta_2\}$, $\{\delta_1-\epsilon_1,\epsilon_1-\delta_2\}$, and $\{\delta_1-\delta_2,\delta_2-\epsilon_1\}$.  To each of these, in order, we associate the $\epsilon\delta$-sequence $\epsilon\delta\delta$, $\delta\epsilon\delta$, and $\delta\delta\epsilon$.  We may also occasionally write, e.g., $\delta^2\epsilon$ for the string $\delta\delta\epsilon$.

Now for each superalgebra $\g$ that we consider, we would like to define a map:
\[
\{\epsilon\delta\text{-sequences}\}\to \{\text{Borels }\b\sub\g\}, \ \ \ \sigma\mapsto \b^{\sigma}
\]
such that the conjugacy class of $\b^{\sigma}$ corresponds to the simple root system defined by $\sigma$.

Since we have already chosen a Cartan subalgebra, and an $\epsilon\delta$-sequence specifies a conjugacy class of Borel, to define $\b^{\sigma}$ it suffices to rigidify the $W$-symmetry by making a choice of even Borel subalgebra, or equivalently a choice of simple roots for the even root system.  We now do this for each algebra $\g$.

For $\g\l(m|n)$, we choose upper diagonal matrices, i.e. the even simple roots
\[
\{\epsilon_i-\epsilon_{i+1}\}_{1\leq i\leq m-1}\cup\{\delta_{j}-\delta_{j+1}\}_{1\leq j\leq n-1}.
\]
For $\o\s\p(2m|2n)$, we choose the even simple roots
\[
\{\epsilon_i-\epsilon_{i+1},\epsilon_{m-1}+\epsilon_m\}_{1\leq i\leq m-1}\cup\{\delta_{j}-\delta_{j+1},2\delta_{n}\}_{1\leq j\leq n-1}.
\]
And for $\o\s\p(2m+1|2n)$, we choose the even simple roots 
\[
\{\epsilon_i-\epsilon_{i+1},\epsilon_m\}_{1\leq i\leq m-1}\cup\{\delta_{j}-\delta_{j+1},2\delta_n\}_{1\leq j\leq n-1}
\]
For example, if $\g=\g\l(m|n)$, then $\b^{\epsilon\cdots\epsilon\delta\cdots\delta}=\b^{\epsilon^m\delta^n}$ has simple root system
\[
\{\epsilon_1-\epsilon_2,\dots,\epsilon_{m}-\delta_1,\delta_1-\delta_2,\cdots\delta_{n-1}-\delta_n\}.
\]
This is the subalgebra of upper-triangular matrices in $\g\l(m|n)$.  

Or, if $\g=\o\s\p(4|4)$, then $\b^{\epsilon(-\epsilon)\delta\delta}=\b^{\epsilon(-\epsilon)\delta^2}$ has simple root system
\[
\{\epsilon_1+\epsilon_2,-\epsilon_2-\delta_1,\delta_1-\delta_2,2\delta_2\}
\]
We now have made choices for representatives of each conjugacy class of Borel subalgebras of $\g$.

\bibliographystyle{amsalpha}
\bibliography{bibliography}

\textsc{\footnotesize Dept. of Mathematics, University of California at Berkeley, Berkeley, CA 94720} 

\textit{\footnotesize Email address:} \texttt{\footnotesize alexsherman@berkeley.edu}

\end{document}